\newcommand{\brho}{\boldsymbol{\rho}}
\newcommand{\R}{\mathbb{R}}
\newcommand{\I}{\mathbf{1}}
\newcommand{\ind}{\boldsymbol{1}}
\newcommand{\p}{\partial}
\newcommand{\ww}{{\rm w}}
\newcommand \A[1]{{\bf (#1)}}
\newcommand{\mW}{\mathcal W}
\newcommand{\bmu}{\boldsymbol \mu}
\newcommand{\cv}{c_{\mathbf Y}} 
\newcommand{\cl}{c_{\mathbf L}} 
\newcommand{\ch}{c_{\mathbf{HK}}} 
\newcommand{\Tspkr}{\mathcal{T}_1} 
\def\div{{\rm{div}}}
\renewcommand{\iff}{\Leftrightarrow}
\newcounter{subhyp}
\let\savedc@hyp\c@hyp
\newcommand{\dotafter}[1]{#1.}
\titleformat{\section}[hang]
{\normalfont\large\bfseries}{\thesection.}{.5em}{\dotafter}[]
\titleformat{\subsection}[runin]
{\normalfont\bfseries}{\thesubsection.}{.4em}{}[.]
\titlespacing*{\subsection}{0pt}{3ex plus 1ex minus .2ex}{1em}
\titleformat{\paragraph}[runin]{\normalfont\bfseries}{\theparagraph.}{.4em}{}[.]
\theoremstyle{plain}
\newtheorem{thm}{Theorem}
\newtheorem{lemme}[thm]{Lemma}
\newtheorem{prop}[thm]{Proposition}
\theoremstyle{definition}
\theoremstyle{remark}
\newtheorem{rem}{Remark}
\renewcommand{\sc}{\mbox{\small{\begin{cursive}s\end{cursive}}}}
\newcommand{\cc}{\mbox{\small{\begin{cursive}c\end{cursive}}}}
\newcommand{\w}{{\rm w}}
\renewcommand{\|}{|}
\begin{document}
\title[Multidimensional stable driven MKV SDEs]{Multidimensional stable driven McKean-Vlasov SDEs with distributional interaction kernel: critical thresholds and related models}

\author{P.-E. Chaudru de Raynal}
\address{Laboratoire de Math\'ematiques Jean Leray, University of Nantes, 2, rue de la Houssini\`ere BP 92208
F-44322 Nantes Cedex 3, France.}
\email{pe.deraynal@univ-nantes.fr}
\author{J.-F. Jabir}
\address{Laboratory of Stochastic Analysis, HSE University, Pokrovsky Blvd, 11, Moscow, Russian Federation.}
\email{jjabir@hse.ru}
\author{S. Menozzi}
\address{LaMME, Universit\'e d'Evry Val d'Essonne, Universit\'e Paris-Saclay, CNRS (UMR 8071), 23 Boulevard de France 91037 Evry, France $\&$ Laboratory of Stochastic Analysis, HSE University, Pokrovsky Blvd, 11, Moscow, Russian Federation.}
\email{stephane.menozzi@univ-evry.fr}
\date{\today}
\maketitle

\begin{abstract}{\color{black}
In this work we continue to investigate well-posedness for stable driven McKean-Vlasov SDEs with distributional interaction kernel following the approach introduced in  \cite{chau:jabi:meno:22-1}. We specifically focus on the impact of the Besov smoothness of the initial condition and quantify how it affects the corresponding density estimates for the SDE. In particular, we manage to attain some \textit{critical} thresholds allowing to revisit/address in a stable noise setting some concrete physical and biological models.
}
\end{abstract}

\keywords{{\small \textbf{Keywords:} McKean-Vlasov SDEs, {\color{black}distributional interaction kernels},  stable processes}}\\
\keywords{{\small \textbf{AMS Subject classification (2020):} Primary: 60H10, 60H50; Secondary: 35K67, 35Q84.}}


\section{Introduction and main results}

\subsection{Framework}
The present work is a follow-up to the previous paper \cite{chau:jabi:meno:22-1} where we investigated well-posedness results -in a weak and strong sense - alongside the distributional regularity of the McKean-Vlasov SDE: 
\begin{equation}\label{main}
X_s^{t,\mu} = \xi + \int_t^s \int  b(r,X_r^{t,\mu}-y) \bmu_r^{t,\mu}(dy) dr + (\mW_s-\mW_t),\:\:\:\bmu_{s}^{t,\mu}=\text{Law}(X_s^{t,\mu}),\ {\color{black} 0 \le t \le s < T,}
\end{equation}
 where $T>0$ is some positive time horizon {\color{black}and} 
the characteristic component $b$ corresponds to a singular interaction kernel lying in a Lebesgue-Besov space of the form
\begin{equation}\label{hyp_b}\tag{A}
b \in L^r((t,T\textcolor{black}{)}
,B_{p,q}^\beta(\R^d,\R^d))=:L^r(B_{p,q}^\beta),\qquad \beta \in [-1,0],\, p,q, r\in [1,+\infty].
\end{equation}
{\color{black}We refer to Section \ref{SEC_BESOV} for a precise definition of these function spaces and related properties or notations}.
In our model of interest, $t$ denotes the initial time of the equation, $\xi$ the initial condition which will be assumed to be distributed according to a given probability measure $\mu$ and independent of the symmetric non-degenerate $\alpha $-stable process  $(\mW_s)_{s\ge t} $, with $\alpha\in (1,2] $ (see Assumption \A{UE} below in the non Brownian case $\alpha\in(1,2)$). 

A natural question which arises consists in deriving conditions which relate the \emph{stable exponent} $\alpha$, the \emph{integrability indexes} $r,p,q$, the \emph{regularity index} $\beta$ and the dimension $d$ to obtain either weak or strong well-posedness for the McKean-Vlasov SDE \eqref{main}. In the first work \cite{chau:jabi:meno:22-1}, we developed an approach {\color{black}to answer such question} which is valid for any initial probability law being viewed as an element of a suitable Besov space (see Lemma 5 in \cite{chau:jabi:meno:22-1} and \eqref{lem_proba_in besov} below) {and for any given horizon $T$}.

We basically obtained therein that weak uniqueness holds for \eqref{main} for any initial probability law provided
\begin{equation}\label{cond_gencase}\tag{\textbf{C0}}
 1-\alpha+\frac \alpha{r}+\frac d{p}<\beta, \ {\color{black}\beta\in (-1,0]}.
\end{equation}
We then needed the \textit{strengthened} condition 
\begin{equation}\label{cond_gencase_S}\tag{\textbf{C0}${}_{{\mathbf S}}$}
2-\frac 32\alpha +\frac d{p}+\frac \alpha{r}<\beta, \ {\color{black}\beta\in (-1,0]},
\end{equation}
to guarantee strong well-posedness. Let us point out that in the diffusive case $\alpha=2$, both conditions coincide whereas in the pure jump (strictly stable) case, the condition 
\eqref{cond_gencase_S} is indeed stronger than \eqref{cond_gencase}.

The approach we used in the quoted work to derive those results consisted in considering the Fokker-Planck equation associated with \textcolor{black}{a} suitable mollification of the coefficients in \eqref{main} and in establishing suitable \textit{a priori} estimates that allowed to then pass to the limit. Importantly, see the introduction of \cite{chau:jabi:meno:22-1} and Section \ref{sec_strategy} below, the structure of the non-linearity leads to a quadratic like term in the Fokker-Planck equation. Through convolution estimates in Besov norms (see Lemma 4	in \cite{chau:jabi:meno:22-1}) we used what we called a \textit{dequadrification approach}, which on the one hand allowed to get rid of the aforementioned quadratic dependence and handle any initial probability law, {as well as any time horizon $T$}, but on the other hand did not allow to consider the \textit{critical} thresholds that naturally appear in some related physical models. \textcolor{black}{For e.g. the Burgers, the 2D incompressible Navier-Stokes or the parabolic-elliptic Keller-Segel equations (focusing on the singular part of the associated kernel for this latter), handling the {\color{black}integrability parameters $p$ and $q$ in \eqref{hyp_b} and the} regularity parameter {\color{black}(including $\beta=-1$)} becomes crucial. This could not be done under the previous conditions. We refer to Section \ref{CONNEC_WITH_MODELS} for a thorough discussion related to the indicated models}.

The purpose of the {\color{black}present} 
 work is therefore to quantify how smooth the initial law $\mu $ must be in order \textcolor{black}{to} establish weak/strong well-posedness for \textit{critical drifts}, \textcolor{black}{beyond the thresholds set in \eqref{cond_gencase} and \eqref{cond_gencase_S}, which correspond to concrete but peculiar non-linear models}. This leads to truly handle the quadratic dependence which will lead to \textit{natural} conditions like well-posedness 
in short time or global well-posedness for sufficiently small, in an appropriate Besov norm, initial data. \textcolor{black}{For simplicity reasons we will here focus on the short time setting and discuss in the appendix how the controls {\color{black}can} be extended to handle as well the global well-posedness}. 
Such features are somehow classical in non-linear analysis (see e.g. \cite{Lemarie-16} for the Navier-Stokes equations). We actually manage to provide a \textit{unified} framework to derive\textcolor{black}{/improve}  known results for \textcolor{black}{some} non-linear models \textcolor{black}{established} for $\alpha=2 $ and to extend them to the pure jump case \textcolor{black}{in a systematic way}.

On the other hand, in the special case $\beta=0 $, which roughly says there is \textit{no} smoothness (nor distributional type singularity), we will also investigate how a regularity gain on the initial condition allows to somehow push forward the well-posedness thresholds of the Krylov-R\"ockner type condition, \textcolor{black}{see \cite{kryl:rock:05} for drifts in time-space Lebesgue spaces when $\alpha=2 $ and \cite{xie:zhan:20}, \cite{chau:jabi:meno:22-1} for $\alpha\in (1,2] $, which is precisely given by \eqref{cond_gencase}  taking therein $\beta=0 $}.

\textbf{Organization of the paper.} We state our main results in the next section.
The strategy of the proof is then briefly recalled in Section \ref{sec_strategy}. We state in Section \ref{SEC_BESOV} some useful properties on Besov spaces that will be used for the proof of the main results. 
The framework of Besov interaction kernels allows to revisit the \textcolor{black}{classical} non linear martingale problem approach \textcolor{black}{for McKean-Vlasov SDEs} in a quite systematic way starting from some global density estimates, which are locally stronger than in \cite{chau:jabi:meno:22-1} whenever the measure $\mu$ lies in some appropriate Besov space. Section \ref{ESTI_FOR_FK} is dedicated \textcolor{black}{to those density estimates} focusing on the properties of the associated Fokker-Planck equation, and Section \ref{sec_WP_SDE} to the derivation of the well-posedness results (in a weak and strong sense). As a by-product of the density estimates obtained in Section \ref{ESTI_FOR_FK},  the related propagation of chaos for a suitable particle system \textcolor{black}{could be captured}. This will specifically concern future works. \textcolor{black}{We can mention \cite{hao:jabi:meno:rock:zhan:24} for related results in the kinetic case.}  \textcolor{black}{Section \ref{CONNEC_WITH_MODELS} is dedicated to the connection of our main results with the concrete 
 models mentioned above.
} 

\textcolor{black}{We would also like to mention that after a presentation of X. Zhang at the online seminar ``Non-local operators, probability and singularities'',  \textcolor{black}{a few days prior  the {\color{black}original} preprint of this work {\color{black}w}as released}, we realized that he together with Z. Hao and M. R\"ockner
had a paper in preparation with related results, see \cite{hao:rock:zhan:23}. We exchanged the current versions of our works and can now specify some differences between them. In \cite{hao:rock:zhan:23}, the authors address the (wider) kinetic setting for stable driven McKean-Vlasov SDEs. The approaches to derive and quantify regularization effects are yet rather different, multi-scale Littlewood-Paley analysis in \cite{hao:rock:zhan:23} whereas we focus on global duality techniques for Besov spaces. Eventually, we try to mainly relate our approach to the probabilistic literature/results on those equations and to the extensions we can provide. The paper  \cite{hao:rock:zhan:23} is more connected to PDE results.
}


\subsection{Main results}\,

{
\noindent\textbf{Data.} We recall that we are given a horizon time $T > 0$, an initial time $0< t<T$ and initial law $\mu \in \mathcal P(\R^d)$, a set of parameters $\alpha \in (1,2]$, $\beta \in [-1,0],\, p,q, r\in [1,+\infty]$ and a convolution kernel $b \in L^r(B_{p,q}^\beta)$. Importantly{\color{black}, as mentioned above,} we will {\color{black}often} assume that $T $ is \textit{sufficiently} small. \textcolor{black}{It will be clear from the proofs below how this smallness condition  must be related to the data (i.e. $\mu, \alpha, \|b\|_{L^r(B_{p,q}^\beta)}) $}.
 
We assume, without loss of generality (see {\color{black}E}quation \eqref{lem_proba_in besov} \textcolor{black}{p. \pageref{lem_proba_in besov}} below), that: $\mu\in B^{\beta_0}_{p_0,q_0}$ with
\begin{equation}\label{INIT_DATA}
\beta_0\ge 0, \textcolor{black}{p_0\in [ 1,+\infty]}, \textcolor{black}{q_0\in [1,+\infty]}.\tag{\textbf{C$ {}_{\mathbf I}$}}
\end{equation}
We also define:
\begin{equation}\label{def_zetra}
\zeta_0 := \left(\beta_0 + \frac{d}{p_0'}\right)\left( 1 \wedge \frac{p_0'}{p} \right),
\end{equation}
where $p_0'$ stands for the conjugate exponent of $p_0$ \textcolor{black}{(with the convention $\infty/\infty=1 $)}.
}

\noindent\textbf{Assumptions \A{UE}.} For a point $z\in \R^d$ we write $z=\zeta \rho,\ (\zeta,\rho)\in \mathbb S^{d-1}\times \R_{+}$ its polar coordinates where $\mathbb S^{d-1} $ stands for the unit sphere of $\R^d$. In the pure jump case $\alpha\in \textcolor{black}{(1,2)}$ we assume the following condition holds. The L\'evy measure $\nu$ of $\mW$ is given by the decomposition $\nu(dz)=w(d\zeta)/\rho^{1+\alpha}\I_{\rho>0}$ where $w$ is a symmetric uniformly non-degenerate measure on $\mathbb S^{d-1}$. Namely, $w $ satisfies the  condition:
$$
\kappa^{-1}|\lambda|^\alpha\leq \int_{\mathbb S^{d-1}} |\zeta\cdot\,\lambda|^\alpha\, w(d\zeta)\leq\kappa|\lambda|^\alpha,\,\text{for all }\lambda\in\R^d\,,
$$
for some $\kappa\ge 1$. We point out that this condition allows in particular to consider L\'evy measures that  have a singular spherical part (like e.g. cylindrical processes).

\noindent\textbf{Constraints on the parameters: {\color{black}}assumptions \A{C1} and \A{C2}.} To state our main results we introduce the following assumptions{\color{black}:} \textcolor{black}{Let
	 $b\in L^r((t,T),B_{p,q}^\beta) $ and $\mu \in B_{p_0,q_0}^{\beta_0} $ {\color{black}with $\zeta_0$ as in \eqref{def_zetra}}.}
{\color{black}
\begin{trivlist}
\item[-] We say that condition \textbf{(C1)} holds if  the following conditions are satisfied:
\begin{align}\label{cond_coeff_SPKR}\tag{\textbf{C1}}
\beta \in (-1,0]\quad \ {\rm and}\ \quad 1-\alpha+\frac \alpha r+\textcolor{black}{[ -\beta+\frac d{p}-\zeta_0]_+}<\beta.
\end{align}
\end{trivlist}
\begin{trivlist}
\item[-]We say that condition \textbf{(C2)} holds if  the following conditions are satisfied:
\begin{align}\label{THE_COND_CI}\tag{\textbf{C2}}
\beta =-1,\ \div(b) \in L^{r}(B_{p,q}^{-1})\quad \ {\rm and}\
\quad 1  - \alpha + \frac \alpha r +\textcolor{black}{[1 + \frac dp   -\zeta_0]_+}  < 0. 
\end{align}


\end{trivlist}

\paragraph{Main results}
Our main results read as follows.

\begin{thm}[Weak well-posedness]\label{main_thm_W}
Assume \eqref{INIT_DATA} holds.
Under \A{C1} or \A{C2},  there exists $0< \Tspkr:=\Tspkr(\alpha,d,b,\mu){\color{black}\le T}$ such that for any $S\textcolor{black}{\le} \Tspkr$ the  McKean-Vlasov SDE \eqref{main} admits a weak solution such that its marginal laws $ (\bmu_s^{t,\mu})_{s\in [t,S]}$ have a density $\brho_{t,\mu}(s,\cdot) $ for almost any time $s\in (t,S]$ satisfying
\begin{eqnarray}\label{EST_A_PRIORI_INTRO}
\sup_{s\in (t,S]}[(s-t)^{\theta}\wedge 1]  |\brho_{t,\mu}(s,\cdot)|_{B_{p',1}^{-\beta+ \textcolor{black}{\vartheta \Gamma}}}<+\infty,
\end{eqnarray}
\textcolor{black}{$\vartheta\in (0,1) $} with
\begin{equation*}
\Gamma := {\color{black}\eta\left\{ \alpha-1+\beta-\frac \alpha r + \beta \ind_{\beta >-1}-\frac dp + \zeta_0 \right\},\quad \eta \in (0,1)},
\end{equation*}
{\color{black}$\eta$} being sufficiently close to 1 and 
\begin{equation*}
\theta := \textcolor{black}{\frac 1\alpha \left\{-\beta + \frac dp - \zeta_0 + \left(\frac{1+\eta}{2\eta}\right) \Gamma\right\}\textcolor{black}{>0}}.
\end{equation*}
Moreover, the solution is unique among those satisfying the property \eqref{EST_A_PRIORI_INTRO}.
\\


\end{thm}
}


\begin{thm}[Strong well-posedness]\label{main_thm_S}
Assum{\color{black}ing and that one of the two following conditions hold:}
\begin{itemize}
\item[$\bullet $] \textcolor{black}{If $\beta\in (-1,0] $ and  \eqref{cond_coeff_SPKR} is reinforced into}
\begin{equation}\label{COND_SPKR_STRONG_INTRO}
\Bigg(2-\frac 32\alpha+\frac \alpha r+\Big[-\beta+\frac dp  -\textcolor{black}{\zeta_0} \Big]\Bigg)\vee \Bigg( 1-\alpha+\frac \alpha r+\Big[-\beta+\frac dp-\textcolor{black}{\zeta_0} \Big]_+\Bigg)<\beta\tag{\textbf{C1}${}_{\mathbf S} $},
\end{equation}
\end{itemize}
or
\begin{itemize}
\item[$\bullet $] \textcolor{black}{If $\beta=-1 $ and \eqref{THE_COND_CI} is reinforced into}
\begin{equation}\label{COND_CRITIQUE_INTRO}
\Bigg(2-\frac 32\alpha+\frac \alpha r+\Big[\frac dp  -\zeta_0 \Big]\Bigg)\vee \Bigg(- \alpha + \frac \alpha r +\textcolor{black}{[1 + \frac dp   -\zeta_0]_+}\Bigg)  <\beta=-1\tag{\textbf{C2}${}_{\mathbf S} $},
\end{equation}
\end{itemize}
then{\color{black}, Theorem \ref{main_thm_W} is reinforced from weak to strong well-posedness.}
\end{thm}


\paragraph{Some comments about the results}
 \begin{itemize}
 \item[$\circ$] The setting \eqref{cond_coeff_SPKR} provides an alternative regime  \textcolor{black}{to \eqref{cond_gencase}} {\color{black}from \cite{chau:jabi:meno:22-1}} whose interest consists in specifically specifying how the additional integrability/smoothness of the initial data impacts the previous bounds when $\beta\in (-1,0] $. We see in \eqref{cond_coeff_SPKR} and \eqref{COND_SPKR_STRONG_INTRO} \textcolor{black}{that a key quantity which appears is the quantity $\zeta_0 $ defined in \eqref{def_zetra} that can be viewed as a  \textit{regularity gain factor} associated with the initial condition and the integrability exponent of the singular interaction kernel $b$. Namely, $\zeta_0 $ corresponds to the {\it intrinsic Besov index} $\beta_0+d/p_0'$ of the initial distribution possibly deflated by a scaling factor when $p_0>p' $}.

 \textcolor{black}{We use this quantity instead of the more common differential dimension $\beta_0- d/p_0 $ associated with the initial condition in order to compare more easily the condition \eqref{cond_gencase} of \cite{chau:jabi:meno:22-1} and \eqref{cond_coeff_SPKR}}.
 
 {\color{black}
 \item[$\circ$]  Let us first give some details about the case $\beta=0 $ (Krylov and R\"ockner type framework) in \eqref{cond_coeff_SPKR}. The point is that we actually manage, through the regularity of the initial condition, to weaken the spatial integrability constraint that formerly appeared in \eqref{cond_gencase}. As such, condition \eqref{cond_coeff_SPKR} precisely quantifies this phenomenon (again compared to \eqref{cond_gencase} taking $\beta=0 $).
 It nevertheless appears that we can only benefit from this regularity  up to a factor $d/p$ and obtain at most the constraint $ \alpha/ r<\alpha -1$. In other word, we cannot hope for a better smoothing than $(d/p -\zeta_0)_+$.}

 \item[$\circ$]  If now $\beta\in (-1,0) $, the equilibrium in \eqref{cond_coeff_SPKR} for the positive part depends on the positivity of $-\beta+ d/p-\textcolor{black}{\zeta_0}
 $\footnote{\textcolor{black}{rewriting, \textcolor{black}{when $p'\ge p_0 $}, this quantity as $-\textcolor{black}{(\beta+\frac d{p'})}-(\beta_0\textcolor{black}{-}\frac d{p_0}) $, we see that it actually corresponds to the difference of the differential/dimension indexes, with the terminology of \cite{RunSic-97}, \cite{Sawano-18}, associated respectively with the spaces $B_{p',1}^{-\beta}, B_{p_0,q_0}^{\beta_0} $.}}. The \textcolor{black}{additional term} $-\beta $ here comes from the strategy of the proof we adopt, through the handling of a quadratic term in the related Fokker-Planck equation, see Lemma \ref{lem_unifesti_gencase2_RELOADED} below. It \textcolor{black}{indeed} seems rather natural since, in order to define properly the non-linear drift in \eqref{main}  when $b\in L^r(B_{p,q}^\beta)$, \textcolor{black}{it is necessary} to have estimates on the law \textcolor{black}{$\bmu^{t,\mu} $} in a function space which can be put in duality, \textcolor{black}{at least for the space variable}, with the one of the drift. For technical reasons  the chosen space will be $L^\infty(B_{p',1}^{-\beta}) $ (and even a slightly more demanding one concerning the regularity parameter, \textcolor{black}{see e.g. the estimates \eqref{EST_A_PRIORI_INTRO} in Theorem \ref{main_thm_W}}). The choice of an $L^\infty $ space in time \textcolor{black}{allows} to iterate the estimates in time whereas taking $1 $ \textcolor{black}{for the second Besov integrability index} instead of the more natural $q'$ (standing for the conjugate of $q$) gives more flexibility concerning the product laws in Besov spaces (see Theorem \ref{thm_paraprod_2} below and again the proof of Lemma \ref{lem_unifesti_gencase2_RELOADED}). Anyhow, the corresponding \textit{intrinsic} Besov index \textcolor{black}{for the law} then reads $-\beta+ d/p $. Thus again, if the \textit{regularity gain factor} of the initial condition is greater than \textcolor{black}{the intrinsic Besov index} associated with the function space in which \textcolor{black}{we} will estimate the law of the process, then weak uniqueness holds under the sole condition $\beta>1-\alpha+ \alpha / r $. With respect to the former condition \eqref{cond_gencase}, valid for any initial probability law, this precisely means that the smoothness of $\mu $ makes the spatial integrability condition $d/p$ unnecessary to have weak uniqueness (at least in small time). When 
 $-\beta+d/p-\textcolor{black}{\zeta_0}
 \ge 0 $, i.e. the \textit{intrinsic} Besov index of the law of the process prevails\textcolor{black}{,} the condition for weak uniqueness reads as
 $$ 1-\alpha+\frac \alpha r-\beta+\frac d{p}-\textcolor{black}{\zeta_0}
 <\beta
\iff \frac12 \Big[1-\alpha+\frac \alpha r+\frac d{p}-\textcolor{black}{\zeta_0}
\Big]<\beta.$$
In that case, the threshold for weak existence and uniqueness is relaxed, \textcolor{black}{compared to \eqref{cond_gencase}}, provided that
$$ \frac12 \Big[1-\alpha+\frac \alpha r+\frac d{p}-\textcolor{black}{\zeta_0}\Big] <1-\alpha+\frac \alpha r+\frac d{p}\iff \alpha-1-\frac \alpha r-\frac d{p}<\textcolor{black}{\zeta_0}.$$
 Going to strong uniqueness in this case we see that
 \begin{align*}
\Bigg(2-\frac 32\alpha+\frac \alpha r+\Big[-\beta+\frac dp-\textcolor{black}{\zeta_0} \Big]\Bigg) \vee \Bigg( 1-\alpha+\frac \alpha r+\Big[-\beta+\frac dp-\textcolor{black}{\zeta_0} \Big]_+\Bigg)\\
 =\begin{cases}
2-\frac 32\alpha+\frac \alpha r+\Big[-\beta+\frac dp-\textcolor{black}{\zeta_0}\Big],\ \\
\hspace*{.5cm}{\rm if}\ \Big[-\beta+\frac dp-\textcolor{black}{\zeta_0} \Big]_+\neq 0\ {\rm or }\ \textcolor{black}{-\beta+\frac dp \le \zeta_0\le -\beta+\frac dp+1-\frac \alpha 2}
, \\
 \textcolor{black}{1-\alpha+\frac \alpha r,\ {\rm if}\ \zeta_0\ge -\beta+\frac dp+1-\frac \alpha 2}
 .
 \end{cases}
  \end{align*}
  Hence, when the initial condition is regular enough,  weak and strong uniqueness are implied by the sole condition
  $\beta>1-\alpha+ \alpha/ r $. In the other cases we see that the condition for strong uniqueness \eqref{cond_gencase_S} is relaxed as soon as $-\beta-\textcolor{black}{\zeta_0}
  <0 $. 

\item[\textcolor{black}{$\circ$}] Let us eventually turn to $\beta=-1$. From the additional, and rather strong structure condition that $\div (b)\in L^r(B_{p,q}^\beta) $ we see that the term
 $-\beta=1 $ disappears in the \textcolor{black}{right} hand side of \eqref{THE_COND_CI} and \textcolor{black}{the second term in the left hand side of \eqref{COND_CRITIQUE_INTRO}}. This is precisely because the structure condition allows to perform an integration by parts in the analysis \textcolor{black}{of} Lemma \ref{lem_unifesti_gencase2_RELOADED} which somehow leads for the l.h.s. to the case $\beta=0 $ under \eqref{cond_coeff_SPKR}. This assumption is strong but can be verified in many settings, one can e.g. think about fluid dynamics problems which involve divergence free drifts, or the Keller-Segel model discussed below. In connection with divergence free drift\textcolor{black}{s} we can mention the work \cite{zhan:zhao:21} by Zhang and Zhao who obtained under this additional condition existence for a \textit{linear} SDE beyond the Krylov and R\"ockner condition.
 
 {\color{black}An important class of kernels $b$ entering the setting $\beta=-1$ is given by: 
 	 $$
 	b=b_1+b_2,\qquad b_1\in L^r((t,T),B^{0}_{p,q}(\mathbb R^d;\mathbb R^d)),\quad b_2=\left(\sum_{j=1}^d \partial_{x_j}q_{1,j},\cdots,\sum_{j=1}^d \partial_{x_j}q_{d,j}\right),  
 	$$
 	where the formed matrix field $q=\{q_{i,j}\}_{1\le i,j\le d}$ is assumed to  anti-symmetric ($q_{j,i}=-q_{i,j}$) and each element $q_{i,j} $ to belong to $L^r((t,T),B^{0}_{p,q}(\mathbb R^d;\mathbb R))$. As such,  according to the Besov embedding $B^{0}_{p,q}\hookrightarrow B^{-1}_{p,q}$ (see \eqref{EMBEDDING}) and the lifting property (\eqref{LO}), $b_1,b_2$ both lies in $L^r((t,T),B^{-1}_{p,q}(\mathbb R^d;\mathbb R^d))$, and, in the sense of distributions, $\div(b_2)=0$, and $\div {\color{black}(}b {\color{black})}=\div(b_1)\in L^r(B^{-1}_{p,q})$. 
 	 {\color{black}T}he above decomposition can be understood as a form of Helmholtz decomposition where $b_2$ is the singular divergence free part (carrying the $\beta=-1$ irregularity) and $b_1$ is the "regular" gradient part viewed as $b_1=\nabla g$ (as $\div(b_1(t,\cdot))\in B^{-1}_{p,q}$ with $\triangle g(t,\cdot)\in B^{-1}_{p,q}$ $\Rightarrow$ $ g\in(\triangle)^{-1}B^{-1}_{p,q}\simeq B^{1}_{p,q}$, and so $\nabla g\in\nabla B^{1}_{p,q}\simeq B^{0}_{p,q}$). As it will be discussed in Section  \ref{CONNEC_WITH_MODELS}, this view is particularly natural for the two-dimensional vortex equation.}

\item[\textcolor{black}{$\circ$}]We insist that all the above discussion concerning  the relaxation of the former condition \eqref{cond_gencase} is valid in \textit{small} time or could a priori be extended to an arbitrary final time under some appropriate conditions \textcolor{black}{(see Appendix \ref{EXT_TO_LONG_TIME} for related discussions)}.
\textcolor{black}{Small time is often a \textit{natural} constraint  with non-linear dynamics, we can e.g. to the Navier-Stokes \cite{Lemarie-16} or the Keller-Segel equation in dimension two\cite{bile:19}.This is a specificity of the current approach that differs}  w.r.t. to the one in \cite{chau:jabi:meno:22-1} which yields well-posedness for any initial condition and fixed final \textcolor{black}{time horizon}. However, this highly depends on the current approach, which consists in handling the quadratic term deriving from the related Fokker-Planck equations and can be seen as the price to pay to quantify the \textit{global} impact of a smoother initial condition, which could have only been quantified in small time in \cite{chau:jabi:meno:22-1}.
 \end{itemize}


\subsection{Mollified SDE and strategy of \textcolor{black}{the} proof}\label{sec_strategy}
\textcolor{black}{The principal steps of our procedure mainly follow those of our previous work \cite{chau:jabi:meno:22-1}. We briefly recall it for the sake of completeness}.
The  \textcolor{black}{strategy} consists first in  establishing the existence of a solution to \eqref{main} in terms of a nonlinear martingale problem, through a mollification of the coefficient and a stability argument (the solutions of the non-linear equations with mollified coefficients \textcolor{black}{form} a  Cauchy sequence in a suitable function space). This actually  allows to obtain the well-posedness of  \eqref{main} directly from the construction of its time-marginal distributions as solution to the nonlinear Fokker-Planck equation related to \eqref{main}.
 
The  martingale problem approach to the well-posedness  of McKean-Vlasov SDEs has been successfully used over the past to handle a wide rage of settings from smooth or ``quasi''-smooth to singular interacting kernels. We refer to the  \textcolor{black}{papers} \cite{Oelschlager-84}, \cite{MelRoe-87} and \cite{Jourdain-97},  and again to \cite{Dawson-83}, \cite{Sznitman-86}, \cite{Meleard-00}, \cite{font:03} \textcolor{black}{- among others -} and references therein for more particular cases. \textcolor{black}{This approach has been notably successful to validate numerical particle methods}. \textcolor{black}{In connection with the current paper, we can as well mention the work Issoglio and Russo \cite{isso:russ:23}, who develop the martingale approach, together with a close functional framework (Besov spaces, paraproduct), for non-linear dynamics involving a singular drift in the non convolutional case, which in turn does not provide similar regularization properties,  and the work {Olivera} \textit{et al.} \cite{oliv:rich:toma:23} addressing well-posedness and the particle approximation of non linear Fokker-Planck equation in a Brownian setting with a convolution kernel in Lebesgue spaces.}

For convenience we now introduce for a drift $b$ satisfying condition {\color{black} \A{C1} or \A{C2}} and any measure $\nu$ for which this is meaningful the notation:
 $$\mathcal B_{\nu}(s,\cdot):=b(s,\cdot)\star \nu(\cdot),$$
\textcolor{black}{where $\star$ denotes the spatial convolution}.
 For all $\varepsilon>0 $ consider a time-space mollified drift $b^\varepsilon $, i.e. $b^\varepsilon$ is smooth and bounded in time and space (see Proposition \ref{PROP_APPROX} below for precise properties related to $b^\varepsilon$ and the proof of this result in \cite{chau:jabi:meno:22-1}).
  We now write similarly,
 \begin{equation}\label{DRIFT_NON_LIN_MOLL}
 \mathcal B_{\nu}^\varepsilon(s,\cdot):=b^\varepsilon(s,\cdot)\star \nu(\cdot),
 \end{equation}
 which is  well defined for any $\nu \in \mathcal P(\R^d) $ since $b^\varepsilon $ is smooth and bounded.

The \textit{smoothened} version of \eqref{main} is defined by the family of McKean-Vlasov SDEs
 \begin{equation}\label{main_smoothed}
X_s^{\varepsilon,t,\mu} = \xi + \int_t^s \mathcal B^\varepsilon_{\textcolor{black}{\bmu^{\varepsilon,t,\mu}_r}}(r,X_r^{\varepsilon,t,\mu}) dr + \mW_s-\mW_t,\quad \textcolor{black}{\bmu_s^{\varepsilon,t,\mu}}=\text{Law}(X_s^{\varepsilon,t,\mu}),\, {\color{black} 0\le t \le s < T,\, \varepsilon >0.}
\end{equation}

{\color{black}

\textcolor{black}{For every $\varepsilon >0$, $\alpha\in (1,2] $,
 the SDE \eqref{main_smoothed} with mollified (i.e. smooth and bounded) interaction kernel $b^\varepsilon$ admits a unique weak solution whose time marginal distributions $(\bmu_s^{\varepsilon,t,\mu})_{s\in (s,T]} $ are absolutely continuous \textcolor{black}{w.r.t. the Lebesgue measure of $\R^d $ with density $(\brho_{t,\mu}^\varepsilon (s,\cdot))_{s\in (s,T]}$} (see \cite{chau:jabi:meno:22-1} Section 1.3 for details)}.
Namely, {\color{black} for any $S<T$,}
\begin{equation}\label{relation_density}
\forall A \in {\color{black}\mathscr B([t,S])} \otimes \mathscr B(\R^d),\quad \textcolor{black}{\bmu}^{\varepsilon,t,\mu}(A) %
\textcolor{black}{:=\int_A {\bmu}_r^{\varepsilon,t,\mu}(dy)=}\int_A \brho_{t,\mu}^\varepsilon(r,y) drdy.
\end{equation}

As a consequence of  It\^o's formula, the following Duhamel representation holds: for each $\varepsilon>0$, $\brho^\varepsilon_{t,\mu}(s,\cdot) $ satisfies for  all $s\in {\color{black} (t,S]} $ and all $(x,y)\in (\R^d)^2 $:

\begin{eqnarray}\label{main_MOLL}
\brho^\varepsilon_{t,\mu}(s,y) = p^\alpha_{s-t}\star \mu(y) - \int_t^s dv \Big[\{\mathcal B_{\brho^\varepsilon_{t,\mu}}^\varepsilon(v,\cdot) \brho^\varepsilon_{t,\mu}(v,\cdot)\} \star \nabla p_{s-v}^{\alpha}\Big] (y),
\end{eqnarray}
where $p^\alpha$ stands for the density of the driving process $\mW$, and with a slight abuse of notation w.r.t. \eqref{DRIFT_NON_LIN_MOLL}, $\mathcal B_{\brho^\varepsilon_{t,\mu}}^\varepsilon(v,\cdot)=[b^\varepsilon (v,\cdot)\star  \mathbf \brho^\varepsilon_{t,\mu}(v,\cdot)] $.

\textcolor{black}{Equivalently, see Lemma 3 in \cite{chau:jabi:meno:22-1}}, for $k \ge 1$, $\brho^{\varepsilon_k}_{t,\mu} $ is a \textit{mild} solution of the equation:
\begin{equation}
\label{PDE_EPS}
\begin{cases}
\partial_s\brho^{\varepsilon_k}_{t,\mu}(s,y)+ \div ( \mathcal B_{\brho^{\varepsilon_k}_{t,\mu}}^{\varepsilon_k}(s,y) \brho^{\varepsilon_k}_{t,\mu}(s,y))-L^\alpha \brho^{\varepsilon_k}_{t,\mu}(s,y)=0, \\
\brho^{\varepsilon_k}_{t,\mu}(t,\cdot)=\mu,
\end{cases}
\end{equation}
where 
$L^\alpha$ is the generator of the driving process.

Provided that $\brho^{\varepsilon_{\color{black}k}}_{t,\mu}(s,\cdot)$  admits a limit $\brho_{t,\mu}(s,\cdot)$ in some appropriate function space which precisely allows to take the limit in the Duhamel formulation \eqref{main_MOLL} we derive that the limit satisfies
\begin{eqnarray}\label{main_DUHAMEL}
\brho_{t,\mu}(s,y) = p^\alpha_{s-t}\star \mu (y)  -\int_t^s dv \Big[\{\brho_{t,\mu}(v,\cdot)\mathcal B_{\brho_{t,\mu}}(v,\cdot)\} \star \nabla p_{s-v}^{\alpha}\Big] (y),
\end{eqnarray}
{\color{black} for any $t < s \le S$.} In other words  $\brho_{t,\mu}(s,y)\,dy$ is a (mild) solution to the nonlinear Fokker-Planck equation related to \eqref{main}:
\begin{equation}
\label{NL_PDE_FK}
\begin{cases}
\partial_s\brho_{t,\mu}(s,y)+\div( \brho_{t,\mu}(s,y)\mathcal B_{\brho_{t,\mu}}(s,y))-L^\alpha \brho_{t,\mu}(s,y)=0\,\ (s,y)\in (t, S]\times \R^d,\\
\brho_{t,\mu}(t,\cdot)=\mu(\cdot).
\end{cases}
\end{equation}
\textcolor{black}{In that case, $\brho_{t,\mu}(s,\cdot) $ is also a solution to \eqref{NL_PDE_FK}  in the sense of  distribution, i.e. 
for any $\varphi \in \mathcal D([t,S)\times \R^d)$,
\begin{equation}\label{DEF_DISTR} \tag{$\mathscr D $}
\int_t^S \int \brho_{t,\mu}(s,x)\big(\partial_s\varphi+\mathcal B_{\brho_{t,\mu}}\cdot \nabla \varphi+\textcolor{black}{L^\alpha}(\varphi)\big)(s,x)\,ds\,dx=-	\int \varphi(t,x)\mu(dx),
\end{equation}
\textcolor{black}{where we used here as well that, from the symmetry of the L\'evy measure $\nu $ of $\mathcal W $, the operator $L^\alpha $ is self-adjoint}. 
It will actually be shown in Lemma \ref{lem_ex_duha_SPKRcase_BIS} below that any limit point of $\brho^{\varepsilon}_{t,\mu}(s,\cdot) $ is actually a distributional solution to \eqref{NL_PDE_FK} which  also satisfies the mild formulation \eqref{main_DUHAMEL}}.

This provides a method to construct a solution to \eqref{main} identifying the limit of the martingale problem related to \eqref{main_smoothed}. 
 More precisely, from the solution to \eqref{main_smoothed}, one can consider the probability measure $\textcolor{black}{\mathbf P}_{\textcolor{black}{t}}^\varepsilon$ on the space \textcolor{black}{$\Omega_\alpha $ (corresponding to the space of c\`adl\`ag functions ${\color{black}\mathbb D([t,S];\R^d)}$ if $\alpha\in (1,2) $ and to the space of continuous functions ${\color{black}\mathcal C([t,S];\R^d)} $ if $\alpha=2$)} such that, for $x(s)$, {\color{black}$t\le s\le S$}, the canonical process on $
 \textcolor{black}{\Omega_\alpha}$, and for $\textcolor{black}{{\mathbf P}}^\varepsilon_t(s,dx):=\textcolor{black}{\mathbf P}^\varepsilon_t(x(s)\in dx)$ the family of probability measures induced by $x(s)$, we have: 
 $\textcolor{blue}{\mathbf P}^\varepsilon_t({\color{black}t,\cdot})$
   is equal to $\mu$ a.s. 
 and for all function $\phi$ twice continuously differentiable on $\R^d$, with bounded derivatives at all order, the process
\[
\phi(x(s))-\phi(x(t))-\int_t^s\left\{\mathcal B_{{\textcolor{black}{\mathbf P}}^\varepsilon_t}(v,x(v))\cdot \nabla\phi(x(v))+L^\alpha(\phi(x(v)))\right\}\,dv , {\color{black}\,t\le s\le S},
\]
is a martingale. Provided, again, that the time-marginal distributions $\textcolor{black}{\mathbf P}^\varepsilon_t(s,dx)=\brho_{t,\mu}^\varepsilon(s,x)\,dx$ lie in a\textcolor{black}{n} appropriate space to ensure that $\textcolor{black}{\mathbf P}_t^\varepsilon$ is compact in $\mathcal P(\textcolor{black}{\Omega_\alpha})$ any corresponding limit along a converging subsequence defines naturally a solution to the (nonlinear) martingale problem related to \eqref{main}. {\color{black} From the well-posedness of the limit Fokker-Planck equation one eventually derives uniqueness results for the time-marginal distributions giving in turn the uniqueness of \eqref{main}.}
We also emphasize that we here consider the \textit{classical}  martingale problem, i.e. the integral of the non-linear drift with singular kernel is well defined (through the estimates established in Section \ref{ESTI_FOR_FK}.)
}

\section{Notation and reminders on some fundamental of Besov spaces} 
\label{SEC_BESOV}

In this paragraph, we set definitions/notations and remind technical preliminaries - reviewed or directly established in \cite{chau:jabi:meno:22-1} - that will be used throughout the present paper.
 
 From here on, we denote  by $B^\gamma_{\ell,m}, \ell,m,\gamma\in \R$  the Besov space with regularity index $\gamma$ and integrability \textcolor{black}{parameters} $ \ell,m$ (\textcolor{black}{see e.g.  \cite{athr:butk:mytn:20}, \cite{CdRM-20}, \cite{Lemarie-02}} for some related applications and the dedicated monograph \cite{Triebel-83a} by Triebel). 
 We use the thermic characterization  through the isotropic stable heat kernel for its definition (see e.g. Section 2.6.4 in \cite{Triebel-83a}). 
\textcolor{black}{Namely, denoting by ${\mathcal S'}( \R^d) $ the dual space of the Schwartz class ${\mathcal S}( \R^d) $},
  \[
 B^\gamma_{\ell,m}=\left\{f\in\textcolor{black}{\mathcal S'}( \R^d)\,:\,| f|_{B^\gamma_{\ell,m}}:=|\mathcal F^{-1}(\phi\mathcal F(f))|_{\textcolor{black}{L^\ell}}+\mathcal T_{\ell,m}^\gamma(f)<\infty\right\},
 \]
 \begin{align}
\mathcal T_{\ell,m}^\gamma(f):=&\left\{
\begin{aligned}
&
\left(\int_0^1\,\frac{dv}{v}v^{(n-\gamma/\textcolor{black}{\alpha})m}|\partial^n_v \tilde p_\alpha(v,\cdot)*f|^m_{L^\ell}\right)^{\frac 1m}\,\text{for}\,1\le m<\infty,\\
&\sup_{v\in(0,1]}\left\{v^{(n-\gamma/\textcolor{black}{\alpha})}|\partial^n_v\tilde p_\alpha(v,\cdot)*f|_{L^\ell}\right\}\,\text{for}\,m=\infty,
\end{aligned}
\right. 
\label{HEAT_CAR}
\end{align}
  $n$ being any non-negative integer (strictly) greater than $\gamma/\textcolor{black}{\alpha}$, the function $\phi$ being a $\mathcal C^\infty_0$-function \textcolor{black}{(infinitely differentiable function with compact support)} such that $\phi(0)\neq 0$, and \textcolor{black}{$\tilde p_\alpha(v,\cdot)$ denoting the density function at time $v$ of the $d$-dimensional isotropic stable process}. We can refer e.g. to the discussion in Section 2.6.4. and the general characterization in Section 2.5.1 of \cite{Triebel-83a} for this characterization of Besov spaces. It will be in particular useful to derive some heat kernel estimates on the density of the driving noise in Appendix \ref{HK_APPENDIX}. This is actually the only point in the work for which we explicitly use the 
  thermic characterization since otherwise we only exploit inequalities in Besov norms to establish our estimates. {\color{black}We refer to the dedicated paragraph at the bottom of p. 19 in \cite{CdRM-20} for a related discussion on the choice of the parameter of the stable heat kernel chose in the thermic characterization.}

We now list some properties that we will throughly exploit to establish the density estimates on the solution of \eqref{main}.

$\bullet$ \emph{Embeddings.} 
\begin{itemize}
\item[(i)] Between Lebesgue and $B^0_{\ell,m}$-spaces \textcolor{black}{(\cite[Prop. 2.1]{Sawano-18})}:
\begin{equation}
\forall 1 \le \ell \le \infty,\qquad B^0_{\ell,1}\hookrightarrow L^\ell \hookrightarrow B_{\ell,\infty}^0.\label{EMBEDDING}\tag{$\mathbf E_1$}
\end{equation}
\item[(ii)] Between Besov spaces \textcolor{black}{(see (1.1) in \cite{trie:14} and Proposition 2.2 in \cite{Sawano-18})}: For all $p_1,p_2,q_1,q_2 \in[1,\infty]$ such that $q_1\le q_2$, $p_1\le  p_2$ and $s_1-d/p_1\geq s_2-d/p_2$, 
\begin{equation}\label{BesovEmbedding}\tag{$\mathbf E_2 $}
B^{s_1}_{p_1,q_1}\hookrightarrow B^{s_2}_{p_2,q_2}.  
\end{equation}
\item[(iii)] \textcolor{black}{Inclusion of Probability measures into Besov spaces (e.g. Lemma 5 in \cite{chau:jabi:meno:22-1})}. {\color{black}For  $\mathcal P(\R^d)$, the space of probability measures on $\mathbb R^d$,}
\begin{align}
	&\mathcal P(\R^d)\subset \cap_{\ell \ge 1}B^{-d/\ell'}_{\ell,\infty},\notag\\
	&\mathcal P(\R^d)\subset \cap_{\ell \ge 1}B^{-d/\ell'-\epsilon}_{\ell,m},\:\:\epsilon>0, m\in[1,\infty)\,\text{where}\,\ell^{-1}+(\ell')^{-1}=1.\label{lem_proba_in besov}\tag{$\mathbf E_3$}
\end{align}
\end{itemize}
$\bullet$ \emph{Young/Convolution inequality} \textcolor{black}{(\cite[Theorem 3]{Burenkov-90})}. Let  $\gamma \in \R$, $\ell$ and $m$ in $[1,+\infty]$. Then for any $\delta\in\mathbb R$, $\ell_1,\ell_2\in [1,\infty]$ such that $1+\ell^{-1} = \ell_1^{-1} + \ell_2^{\textcolor{black}{-1}}$ and $m_1,m_2\in (0,\infty]$ such that $m_1^{-1} \ge \max\{m^{-1}-m_2^{-1},0\}$
\begin{equation}\label{YOUNG}
| f\star g|_{B_{\ell,m}^\gamma} \le \cv | f |_{B^{\gamma - \delta}_{\ell_1,m_1}} | g |_{B^{\delta}_{\ell_2,m_2}}\textcolor{black}{,}\tag{$\mathbf Y $}
\end{equation}
for $\cv$ a universal constant depending only on $d$.  \textcolor{black}{We can also mention \cite{KuhSch-21} for recent  convolution inequalities for Besov and Triebel-Lizorkin spaces}.

$\bullet$ \emph{Besov norm of heat kernel}. There exists $\ch:=C(\alpha,\textcolor{black}{\ell},m,\gamma,d)$ s.t. for all multi-index $\textcolor{black}{\mathbf a \in \mathbb N^{d}}$ 
with $ |\mathbf a|\le 1 $, {\color{black} $\gamma \in \R$, with $\gamma\neq -d(1-1/\ell) $ for $\gamma<0 $} and $0<v<s<\infty$: {\color{black}
\begin{equation}\label{SING_STABLE_HK}
\big| \p^{\mathbf a} p_{s-v}^{\alpha} \big|_{B^{\gamma}_{\ell,m}}\le \frac{\ch}{[(s-v)\wedge 1]^{\textcolor{black}{\frac {|\mathbf a|}\alpha+[\frac \gamma\alpha+\frac d{\alpha}(1-\frac 1\ell)]_+}}[(s-v)\vee 1]^{\frac d{\alpha}(1-\frac 1\ell) +\frac {|\mathbf a|}\alpha}}. \tag{$\mathbf H\mathbf K $}
\end{equation}
\textcolor{black}{The proof for $\gamma\ge 0 $ can e.g. be found in \cite[Lemma 11, 12 and (3.19)]{CdRM-20}. For $\gamma<0 $ we refer to Appendix \ref{HK_APPENDIX}.}\\
}
\noindent
$\bullet$ \emph{Lift operator.} For any $\gamma \in \R$, $\ell,m \in [1,\infty]$, there exists $\cl>0$ such that
\begin{equation}\label{LO}
|\nabla f|_{B^{\gamma-1}_{\ell,m}} \le \cl |f|_{B_{\ell,m}^{\gamma}}. \tag{$\mathbf L$}
\end{equation}

%
$\bullet$ Smooth approximation of the interaction kernel and associated uniform-control properties:
\begin{prop}\cite[Proposition 2]{chau:jabi:meno:22-1}\label{PROP_APPROX}
Let  $b\in L^r((t,T],B_{p,q}^\beta)$ and {\color{black} $\beta\in [-1,0] $}, $1\le p,q \le \infty $.
There exists a sequence of \textcolor{black}{time-space} smooth bounded functions $(b^\varepsilon)_{\varepsilon >0} $ s.t.
\begin{equation}\label{SMOOTH_APP_GEN}
|b-b^\varepsilon|_{L^{\bar r}((t,T],B_{p,q}^{\tilde \beta})} \underset{\varepsilon \rightarrow 0}{\longrightarrow} 0,\quad \forall \tilde \beta<\beta,
\end{equation}
with $\bar r=r $ if $r<+\infty $ and for any $\bar r<+\infty $ if $r=+\infty$. Moreover, there exists $ \cc\ge 1$, $\displaystyle \sup_{\varepsilon>0}|b^\varepsilon|_{L^{\bar r}((t,T],B_{p,q}^{\beta})}\le \textcolor{black}{\cc} |b|_{L^{\bar r}((t,T],B_{p,q}^{\beta})}$.

If $p,q,r<+\infty$ it then also holds, see e.g. \cite{Lemarie-02}
, that 
\begin{equation}\label{SMOOTH_APPR_FINITE}
|b-b^\varepsilon|_{L^{ r}((t,T],B_{p,q}^{\beta})} \underset{\varepsilon \rightarrow 0}{\longrightarrow} 0.
\end{equation}
\end{prop}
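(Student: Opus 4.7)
The plan is to build $b^\varepsilon$ by a standard time-space mollification combined with a slow spatial cutoff ensuring boundedness without spoiling convergence. Fix nonnegative $\phi\in C_c^\infty(\R)$, $\psi\in C_c^\infty(\R^d)$ with unit mass and a cutoff $\chi\in C_c^\infty(\R^d)$ with $\chi\equiv 1$ on the unit ball. With $\bar b$ the extension of $b$ by zero in time outside $[t,T]$, $\phi_\varepsilon(s):=\varepsilon^{-1}\phi(s/\varepsilon)$, $\psi_\varepsilon(x):=\varepsilon^{-d}\psi(x/\varepsilon)$, I would set
\[
b^\varepsilon(s,x):=\chi(\varepsilon x)\,\bigl(\phi_\varepsilon *_s \psi_\varepsilon *_x \bar b\bigr)(s,x).
\]
By construction $b^\varepsilon$ is $C^\infty$, compactly supported in $x$, hence bounded. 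The convolution estimate \eqref{YOUNG} (with $\delta=0$, $\ell_1=1$, $\ell_2=p$) in space gives $|\psi_\varepsilon *_x b(s,\cdot)|_{B_{p,q}^\beta}\le \cv|b(s,\cdot)|_{B_{p,q}^\beta}$; the dilated multiplier $\chi(\varepsilon\cdot)$ has $C^1$-norm controlled uniformly in $\varepsilon\le 1$ and therefore acts boundedly on $B_{p,q}^\beta$ with a constant independent of $\varepsilon$ (for the range $\beta\in(-1,0]$ standard smooth-multiplier estimates on Besov spaces apply). Young's inequality in time with $|\phi_\varepsilon|_{L^1(\R)}=1$ then produces the uniform bound $\sup_{\varepsilon>0}|b^\varepsilon|_{L^{\bar r}((t,T],B_{p,q}^\beta)}\le \cc\,|b|_{L^{\bar r}((t,T],B_{p,q}^\beta)}$.

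To establish \eqref{SMOOTH_APPR_FINITE} when $p,q,r<\infty$, I would rely on the density of $C_c^\infty((t,T)\times\R^d)$ in $L^r((t,T],B_{p,q}^\beta)$ and run a classical three-term approximation: approximate $b$ by a smooth $b_\delta$ with $|b-b_\delta|<\delta$ in the ambient norm, write $b-b^\varepsilon=(b-b_\delta)+(b_\delta-b_\delta^\varepsilon)+(b_\delta^\varepsilon-b^\varepsilon)$, and observe that the first and last pieces are $\lesssim \delta$ by the uniform bound above while the middle one vanishes as $\varepsilon\to 0$ by a direct computation on the smooth, compactly supported $b_\delta$.

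The main obstacle is the general case (some of $r,p,q$ equal to $+\infty$), in which no such density holds in the target norm. My strategy is to accept an arbitrary small loss $\tilde\beta<\beta$ in the regularity index and invoke interpolation: picking an auxiliary $\gamma<\tilde\beta$ sufficiently negative and $\vartheta=(\beta-\tilde\beta)/(\beta-\gamma)\in(0,1)$, one has
\[
|b^\varepsilon(s,\cdot)-b(s,\cdot)|_{B_{p,q}^{\tilde\beta}}\le C\,|b^\varepsilon(s,\cdot)-b(s,\cdot)|_{B_{p,q}^\gamma}^{\vartheta}\,|b^\varepsilon(s,\cdot)-b(s,\cdot)|_{B_{p,q}^\beta}^{1-\vartheta}.
\]
The first factor tends to $0$ pointwise in $s$ since $b^\varepsilon\to b$ in any sufficiently negative Besov (or Sobolev) norm, while the second is uniformly bounded thanks to the first step. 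Dominated convergence in the time variable, dominated by the $L^{\bar r}$-in-time constant coming from the uniform bound, then delivers \eqref{SMOOTH_APP_GEN} for any $\bar r<\infty$; the restriction $\bar r<\infty$ when $r=\infty$ is unavoidable, since time mollification is never strongly convergent in $L^\infty$.
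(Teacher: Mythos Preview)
The paper does not prove this proposition; it is quoted verbatim from the companion work \cite{chau:jabi:meno:22-1} and the reader is referred there for the argument. So your plan must stand on its own merits.

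The outline is sound, but there is one genuine gap: the spatial cutoff $\chi(\varepsilon\,\cdot)$ breaks the convergence \eqref{SMOOTH_APP_GEN} when $p=\infty$. Take $b\equiv 1\in L^\infty((t,T],B_{\infty,\infty}^0)$. Then $\phi_\varepsilon*_s\psi_\varepsilon*_x 1=1$ and your approximation is $b^\varepsilon(s,x)=\chi(\varepsilon x)$, so $b^\varepsilon-b=\chi(\varepsilon\,\cdot)-1$. For any $\gamma\le 0$ and any $v\in(0,1]$, pick $|x|$ large enough that the stable heat kernel $\tilde p^\alpha_v(x-\cdot)$ puts essentially all its mass where $\chi(\varepsilon\,\cdot)-1\equiv -1$; then $|\tilde p^\alpha_v*(\chi(\varepsilon\,\cdot)-1)|_{L^\infty}\ge c>0$ uniformly in $\varepsilon$ and $v$, and hence $|\chi(\varepsilon\,\cdot)-1|_{B_{\infty,q}^{\gamma}}$ does \emph{not} tend to $0$ for any $\gamma$. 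Your claim that ``the first factor tends to $0$ pointwise in $s$ since $b^\varepsilon\to b$ in any sufficiently negative Besov norm'' therefore fails in this regime, and the interpolation step collapses.

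The fix is simply to drop the cutoff: it is not needed for boundedness. By duality \eqref{EQ_DUALITY}, $|\psi_\varepsilon*_x b(s,x)|\le |b(s,\cdot)|_{B_{p,q}^\beta}\,|\psi_\varepsilon(x-\cdot)|_{B_{p',q'}^{-\beta}}$, and the second factor is a finite ($\varepsilon$-dependent) constant since $\psi_\varepsilon\in\mathcal S$; a further time convolution with $\phi_\varepsilon$ against $s\mapsto |b(s,\cdot)|_{B_{p,q}^\beta}\in L^r$ then gives a finite $L^\infty$ bound in $(s,x)$ for each fixed $\varepsilon$. With $b^\varepsilon:=\phi_\varepsilon*_s\psi_\varepsilon*_x \bar b$ your uniform bound follows exactly as you wrote, the spatial piece $\psi_\varepsilon*_x b(s,\cdot)-b(s,\cdot)$ is quantitatively small in $B_{p,q}^{\tilde\beta}$ (a standard Littlewood--Paley splitting gives a rate $\varepsilon^{\beta-\tilde\beta}$ times $|b(s,\cdot)|_{B_{p,q}^\beta}$), and the time piece $\phi_\varepsilon*_s b-b\to 0$ in $L^{\bar r}((t,T];B_{p,q}^{\tilde\beta})$ by the usual mollifier argument in Bochner spaces (strong measurability makes $b$ essentially separably valued, so the standard density/three-term argument applies). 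After this correction your interpolation-plus-dominated-convergence route goes through.
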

{\color{black} This proposition has been proved in \cite{chau:jabi:meno:22-1} for $\beta$ in $(-1,0]$ ($-1$ being excluded) as this was the set of assumptions therein. However, it can be readily checked that the proof still work with $\beta=-1$, as for this regularity index, only the difference $ \beta-\tilde \beta >0$ really matters.\\}

$\bullet$ {\color{black}\textit{Products of Besov spaces and related embeddings} 

 Let $f\in B^{\gamma_1}_{\ell_1,m_1}$, $g\in B^{\gamma_2}_{\ell_2,m_2}$.
We recall the following definition for the product, see \cite{RunSic-97}, Sections 4.2 and 4.3, 
\[
f\cdot g=\lim_{j\rightarrow \infty} \mathcal F^{-1}\Big(\textcolor{black}{\psi}(2^{-j}\xi) \mathcal F(f)(\xi)\Big) \times  \mathcal F^{-1}\Big(\textcolor{black}{\psi}(2^{-j}\xi)\mathcal F(g)(\xi)\Big),
\]  
\textcolor{black}{where $\psi\in \mathcal C_0^\infty $ s.t. $\psi(x)=\begin{cases}1,\ |x|\le 1\\
0,\ |x|\ge \frac 32\end{cases} $},
whenever the limit exists in $\mathcal S'$. Then, from Theorem $2$, p. $177$ in \cite{RunSic-97}, the following embeddings  hold:

\begin{thm}\label{thm_paraprod_2}

Let $\gamma>0 $, $\ell,\ell_1,\ell_2\in [1,+\infty] $ s.t. $\ell^{-1}=(\ell_1)^{-1}+(\ell_2)^{-1} $. Then
\begin{equation}
\label{PROD1}
B^{\gamma}_{\ell_1,\infty}\cdot B^{\gamma}_{\ell_2,1}\hookrightarrow B^{\gamma}_{\ell,\infty} \tag{$\textbf{Prod}$}.
\end{equation}
In particular there exists $C$ s.t. for all $f\in B_{\ell_1,\infty}^{\gamma}$, $g\in B_{\ell_2,1}^\gamma $,
 \begin{eqnarray*}
 \Big|f \cdot  g \Big|_{B^{\gamma}_{\ell,\infty}} 
\le C\Big|f\Big|_{B^{ \gamma}_{ \ell_1,\infty}} \Big|g\Big|_{B^{ \gamma}_{ \ell_2,\textcolor{black}{1}}}.
 \end{eqnarray*}
 \end{thm}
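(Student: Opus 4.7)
The plan is to use Bony's paraproduct decomposition in the Littlewood--Paley framework, which is the standard tool for product estimates in Besov spaces; the thermic characterization used in \eqref{HEAT_CAR} is equivalent to the classical dyadic one, see \cite{Triebel-83a}. Fix a smooth dyadic partition of unity $\{\varphi_j\}_{j \ge -1}$ on $\R^d$ and set $\Delta_j h := \mathcal F^{-1}(\varphi_j \mathcal F h)$ and $S_j := \sum_{k \le j} \Delta_k$. Then $|h|_{B^\sigma_{\ell, m}}$ is equivalent to $\big( \sum_{j \ge -1} 2^{j\sigma m} |\Delta_j h|^m_{L^\ell} \big)^{1/m}$, with the obvious modification for $m = \infty$. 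Decompose the product via Bony as $f \cdot g = T_f g + T_g f + R(f, g)$, where $T_f g := \sum_j S_{j-1} f \cdot \Delta_j g$ is the paraproduct and $R(f, g) := \sum_{|j - k| \le 1} \Delta_j f \cdot \Delta_k g$ is the remainder.

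For the paraproduct $T_f g$: the summand $S_{j-1} f \cdot \Delta_j g$ has Fourier support in an annulus of scale $2^j$, so $\Delta_n(T_f g)$ only collects the terms with $j \sim n$. Using $\gamma > 0$, one has $|S_{j-1} f|_{L^{\ell_1}} \le |f|_{B^\gamma_{\ell_1, \infty}} \sum_{k < j} 2^{-k\gamma} \le C |f|_{B^\gamma_{\ell_1, \infty}}$ uniformly in $j$; combined with H\"older's inequality under $\ell^{-1} = \ell_1^{-1} + \ell_2^{-1}$ this yields $2^{n\gamma} |\Delta_n(T_f g)|_{L^\ell} \le C |f|_{B^\gamma_{\ell_1, \infty}} \cdot 2^{n\gamma} |\Delta_n g|_{L^{\ell_2}}$, and taking $\sup_n$ together with the embedding $B^\gamma_{\ell_2, 1} \hookrightarrow B^\gamma_{\ell_2, \infty}$ gives $|T_f g|_{B^\gamma_{\ell, \infty}} \le C |f|_{B^\gamma_{\ell_1, \infty}} |g|_{B^\gamma_{\ell_2, 1}}$. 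The symmetric term $T_g f$ is handled identically with the roles of $f$ and $g$ swapped; the bound $|S_{j-1} g|_{L^{\ell_2}} \le |g|_{B^0_{\ell_2, 1}} \le |g|_{B^\gamma_{\ell_2, 1}}$ is now uniform in $j$ thanks to the $\ell^1$-summability in the third Besov index (this is where the asymmetric assumption $m = 1$ on $g$ matters).

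The main obstacle is the remainder $R(f, g)$, whose summands have Fourier support in a ball (not an annulus) of radius $\sim 2^k$, so $\Delta_n R(f, g)$ receives contributions from all $k \ge n - c$ for some absolute constant $c$. Applying H\"older and $|\Delta_k f|_{L^{\ell_1}} \le 2^{-k\gamma} |f|_{B^\gamma_{\ell_1, \infty}}$ gives
\[
2^{n\gamma} |\Delta_n R(f, g)|_{L^\ell} \le C |f|_{B^\gamma_{\ell_1, \infty}} \sum_{k \ge n - c} 2^{(n-k)\gamma} |\Delta_k g|_{L^{\ell_2}},
\]
and since $n - k \le c$ throughout the sum, the exponential factor is bounded; the remaining $\ell^1$-sum is controlled by $C |g|_{B^0_{\ell_2, 1}} \le C |g|_{B^\gamma_{\ell_2, 1}}$ using $\gamma > 0$, which closes the estimate. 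Summing the three pieces yields the claim. The hypothesis $\gamma > 0$ enters twice: to make $\sum_{k < j} 2^{-k\gamma}$ converge so that the low-frequency cutoffs $S_j$ are uniformly controlled, and to ensure the embedding $B^\gamma_{\ell_2, 1} \hookrightarrow B^0_{\ell_2, 1}$ that closes the remainder. Alternatively one may simply invoke the statement directly from Runst--Sickel \cite{RunSic-97} as cited.
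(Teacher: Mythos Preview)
Your paraproduct argument is correct and complete; each of the three pieces $T_f g$, $T_g f$, and $R(f,g)$ is handled with the right tool (uniform $L^{\ell_1}$ bound on $S_{j-1}f$ from $\gamma>0$, the $\ell^1$-summability in the third index for $g$ to control $S_{j-1}g$ and the remainder tail), and the H\"older pairing $\ell^{-1}=\ell_1^{-1}+\ell_2^{-1}$ is used consistently.

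Note, however, that the paper does not actually prove this statement: it is quoted verbatim as Theorem~2, p.~177 of Runst--Sickel \cite{RunSic-97}, with no argument given. So there is no ``paper's own proof'' to compare against beyond the citation you already mention in your final sentence. Your write-up is therefore strictly more than what the paper offers; if the goal is only to match the paper, the one-line reference to \cite{RunSic-97} suffices, whereas if a self-contained argument is wanted, your Bony decomposition is the standard route and is fine as written.
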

 The previous theorem will be extensively used for estimates on \eqref{main_MOLL} later on. 

}

{\color{black}
\paragraph{Weighted Lebesgue-Besov spaces} As a preliminary step before stating our main results, we introduce a characteristic class of weighted iterated 
\textcolor{black}{\textit{Lebesgue-Besov}} function spaces. The solutions to the Fokker-Planck equation related to \eqref{main} and some associated \textit{a priori} estimates will be sought in those, Bochner type, spaces. Introduce for 
$\sc, \ell,m\in [1,+\infty], \gamma, \theta\in \R $, {\color{black}$t\le S\textcolor{black}{\le} T$,}
\begin{align*}
&L_{\w_\theta}^{\sc}((t,S],B_{\ell,m}^\gamma):=\Bigg\{f:s\in[t,S]\mapsto f(s,\cdot)\in B^{ \gamma}_{ \ell, m}\,\text{measurable, s.t.}\, \int_t^S  | f(s,\cdot)|_{B^{ \gamma}_{ \ell, m}}^{\textcolor{black}{\sc}} 
(s-t)^\theta ds<+\infty \Bigg\},
\end{align*}
if $\sc<+\infty $ and
\begin{align}
&L_{{\text w}_\theta}^\infty((t,S],B_{\ell,m}^\gamma)\notag\\
:=&\Bigg\{f:s\in[t,S]\mapsto f(s,\cdot)\in B^{ \gamma}_{ \ell, m}\,\text{measurable, s.t.}\, \text{ess sup}_{s\in(t,S]}  \Big(
(s-t)^\theta| f(s,\cdot)|_{B^{ \gamma}_{ \ell, m}}\Big)<+\infty \Bigg\}.\label{DEF_NORM_W}
\end{align}
}
\begin{rem}
We emphasize that one of the main differences with  \cite{chau:jabi:meno:22-1} in the definition of the weighted spaces is that we here consider a weight which involves the initial time $t$ (or \textit{backward} weight), whereas we previously considered in   \cite{chau:jabi:meno:22-1} a weight involving the final time (or \textit{forward} weight). This is mainly due to the fact that we here want to absorb the potentially insufficient  smoothing effects of the initial measure in order to address the current critical case. In \cite{chau:jabi:meno:22-1}, the forward weights were chosen in order to equilibrate the higher singularities  of the gradient of the heat kernel in the Duhamel representation. The approach will be here different since we will not rely exclusively on the heat kernel to absorb the singularities induced by the regularity estimates, \textcolor{black}{but also on the initial condition}. 
\end{rem}


Endowed with the metric
\begin{equation*}
|f|_{L_{{\w}_{\theta}}^{\textcolor{black}{\sc}}((t,S],B^\gamma_{\ell,m})}=\Bigg( \int_t^S   | f(s,\cdot)|_{B^{\gamma}_{\ell,m}}^{\textcolor{black}{\sc}}(s-t)^\theta ds\Bigg)^{\frac 1{\textcolor{black}{\sc}}} ,
\end{equation*}
with the usual modification if $ \textcolor{black}{\sc}=+\infty $, and recalling that $(B^{\gamma}_{\ell,m},|\cdot|_{B^{\gamma}_{\ell,m}})$ is a Banach space, the normed space $(L_{{\w}_{\theta}}^{\textcolor{black}{\sc}}((t,S],B_{\ell,m}^\gamma) ,|\cdot|_{L_{{\w}_{\theta}}^{\textcolor{black}{\sc}}((t,S],B^\gamma_{\ell,m})})$ is also a Banach space (see e.g. \cite[Chapter 1]{HyNeVeWe-16}). In the case $\theta=0$, $L_{{\w}_{\theta}}^{\textcolor{black}{\sc}}((t,S],B^\gamma_{\ell,m})$ reduces to
$L^{\textcolor{black}{\sc}}\big((t,S],B^{ \gamma}_{ \ell, m})$.

\section{Estimates on the Fokker-Planck equation \eqref{NL_PDE_FK}}
\label{ESTI_FOR_FK}
%
\begin{prop}\label{WP_FK} Let $\beta \in [-1,0]$. Assume that the \emph{parameters} are such that \eqref{cond_coeff_SPKR} holds  for $\beta\in (-1,0] $ (resp. \eqref{THE_COND_CI} if $\beta=-1$). Then, for any  $(t,\mu)$  in $[0,\textcolor{black}{T)} \times \mathcal P(\R^d)\cap B_{p_0,q_0}^{\beta_0}$, the non-linear Fokker-Planck equation \eqref{NL_PDE_FK} admits a solution  
which is unique among all the distributional solutions lying in $ L_{{\rm w}_\theta}^{\infty}((t,S],B^{-\beta+\vartheta \Gamma}_{p',\textcolor{black}{1}}),\ \textcolor{black}{S\le\mathcal T_1}$ with $\textcolor{black}{\mathcal T_1\le T}, \theta,\Gamma $ as in Theorem \ref{main_thm_W} and $\vartheta\in (0,1)$.

Moreover, \textcolor{black}{for all $s\in [t,S], \brho_{t,\mu}(s,\cdot)$ belongs to $\mathcal P(\R^d)$}. \textcolor{black}{Eventually,  for a.e. $s$ in $(t,\textcolor{black}{S}]$, $\brho_{t,\mu}(s,\cdot)$ is absolutely continuous w.r.t. the Lebesgue measure  and satisfies the Duhamel representation \eqref{main_DUHAMEL}}.
\end{prop}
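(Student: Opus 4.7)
I follow the compactification/fixed-point scheme sketched in Section \ref{sec_strategy}. The core task is to establish a closed \emph{a priori} bound for the mollified densities $\brho^\varepsilon_{t,\mu}$ in the weighted space $\mathbb{X}_S:=L^\infty_{{\rm w}_\theta}((t,S],B^{-\beta+\vartheta\Gamma}_{p',1})$, uniformly in $\varepsilon>0$, on a short enough interval $(t,S]$ (with $S<\Tspkr$), and then promote the resulting compactness to a solution of \eqref{NL_PDE_FK}, for which uniqueness will follow from a similar bilinear estimate on the difference of two solutions.

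\textbf{Step 1: the a priori bound.} Starting from the Duhamel representation \eqref{main_MOLL}, I would take $B^{-\beta+\vartheta\Gamma}_{p',1}$-norms and split the right-hand side in two pieces. For the \emph{free} term $p^\alpha_{s-t}\star\mu$, the convolution inequality \eqref{YOUNG} applied with $\mu\in B^{\beta_0}_{p_0,q_0}$ and the heat kernel estimate \eqref{SING_STABLE_HK} produce a singularity of the order $(s-t)^{-\theta}$ for the correct choice of $\theta$, which is precisely compensated by the weight ${\rm w}_\theta$. For the \emph{nonlinear} term $\bigl\{\mathcal B^\varepsilon_{\brho^\varepsilon}\brho^\varepsilon\bigr\}\star\nabla p^\alpha_{s-v}$ I would use, successively: the lift operator \eqref{LO} and \eqref{SING_STABLE_HK} to absorb the gradient and transfer regularity to the product; the paraproduct inequality \eqref{PROD1} (Theorem \ref{thm_paraprod_2}) to estimate $(b^\varepsilon\star\brho^\varepsilon)\cdot\brho^\varepsilon$ in the relevant Besov norm, which requires two different integrability partners whose indices match thanks to the way $\Gamma$ is defined; and finally \eqref{YOUNG} once more to bound $b^\varepsilon\star\brho^\varepsilon$ in terms of $|b|_{L^r(B^\beta_{p,q})}$ and $|\brho^\varepsilon|_{\mathbb{X}_S}$. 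Condition \eqref{cond_coeff_SPKR} (resp.\ \eqref{THE_COND_CI}) is what guarantees that after integration in $v$ all the time singularities are integrable and leave a positive power of $(S-t)$; in the critical case $\beta=-1$, the structure hypothesis $\div(b)\in L^r(B^\beta_{p,q})$ allows an integration by parts eliminating the $-\beta$ contribution in the counting, exactly as in Lemma \ref{lem_unifesti_gencase2_RELOADED}. The net outcome is a quadratic inequality of the shape
\begin{equation*}
N^\varepsilon(S)\;\le\; C_1\,|\mu|_{B^{\beta_0}_{p_0,q_0}} \;+\; C_2\,(S-t)^{\kappa}\,\bigl(N^\varepsilon(S)\bigr)^2,\qquad N^\varepsilon(S):=|\brho^\varepsilon_{t,\mu}|_{\mathbb{X}_S},
\end{equation*}
for some $\kappa>0$ and constants $C_1,C_2$ independent of $\varepsilon$. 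Standard continuity in $S$ from $S=t$ then gives a uniform bound $N^\varepsilon(S)\le 2C_1|\mu|_{B^{\beta_0}_{p_0,q_0}}$ for all $S<\Tspkr$, where $\Tspkr$ is the largest time at which $4C_1C_2(S-t)^{\kappa}|\mu|_{B^{\beta_0}_{p_0,q_0}}<1$; in particular $\Tspkr=T$ when $|\mu|_{B^{\beta_0}_{p_0,q_0}}$ is sufficiently small, which is the global regime of Theorem \ref{main_thm}.

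\textbf{Step 2: passage to the limit and existence.} Given the uniform bound in $\mathbb{X}_S$, together with the tightness already inherent in $\brho^\varepsilon_{t,\mu}(s,\cdot)\in\mathcal P(\R^d)$, I would extract (along a subsequence $\varepsilon_k\downarrow 0$) a weak limit $\brho_{t,\mu}\in\mathbb{X}_S$ of the densities; the approximation property \eqref{SMOOTH_APP_GEN} for $b^\varepsilon$ combined with \eqref{YOUNG}, \eqref{PROD1} and \eqref{SING_STABLE_HK} then allows one to pass to the limit in each term of the Duhamel identity \eqref{main_MOLL} for $\brho^{\varepsilon_k}_{t,\mu}$, producing \eqref{main_DUHAMEL} for $\brho_{t,\mu}$. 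Positivity and total mass $1$ are preserved from the mollified level (where well-posedness of \eqref{main_smoothed} and conservation of mass are classical since $b^\varepsilon$ is smooth and bounded), so $\brho_{t,\mu}(s,\cdot)\in\mathcal P(\R^d)$ for every $s\in[t,S]$; absolute continuity on $(t,S]$ follows from the $\mathbb{X}_S$-bound and the embeddings \eqref{EMBEDDING}--\eqref{BesovEmbedding}.

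\textbf{Step 3: uniqueness in $\mathbb{X}_S$.} For two solutions $\brho_1,\brho_2\in\mathbb{X}_S$ of \eqref{main_DUHAMEL} with the same initial datum $\mu$, the difference $\delta:=\brho_1-\brho_2$ satisfies the linearized Duhamel identity whose source is
\begin{equation*}
\brho_1\,\mathcal B_{\brho_1}-\brho_2\,\mathcal B_{\brho_2} \;=\; \delta\,\mathcal B_{\brho_1} \;+\; \brho_2\,\mathcal B_{\delta}.
\end{equation*}
The same sequence of estimates as in Step~1 — \eqref{LO}, \eqref{SING_STABLE_HK}, \eqref{PROD1}, \eqref{YOUNG} — bounds each bilinear term by $(S-t)^{\kappa}\bigl(|\brho_1|_{\mathbb{X}_S}+|\brho_2|_{\mathbb{X}_S}\bigr)|\delta|_{\mathbb{X}_S}$, and shrinking $S$ (or iterating on successive short intervals up to $\Tspkr$) this yields $|\delta|_{\mathbb{X}_S}\le \tfrac12|\delta|_{\mathbb{X}_S}$, hence $\delta\equiv 0$.

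\textbf{Main obstacle.} The delicate point is Step~1: balancing the three constraints coming from the gradient of the stable kernel (a $1/\alpha$-power singularity absorbed through \eqref{SING_STABLE_HK}), from the product rule \eqref{PROD1} (which forces the choice of the integrability index $p'$ and of the fine index $1$, as well as the regularity $-\beta+\vartheta\Gamma$), and from the initial datum (whose intrinsic Besov index $\beta_0+d/p_0'$ controls how much singularity the weight ${\rm w}_\theta$ must carry). Getting the bookkeeping right so that all three fit simultaneously under the single condition \eqref{cond_coeff_SPKR} (resp.\ \eqref{THE_COND_CI} in the critical case $\beta=-1$, where the integration by parts on $\div(b)$ is crucial to kill an otherwise forbidden $-\beta=1$) is the heart of the argument, and is exactly what forces $\Tspkr<\infty$ in general and allows $\Tspkr=T$ only under a smallness assumption on $|\mu|_{B^{\beta_0}_{p_0,q_0}}$.
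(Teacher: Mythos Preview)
Your Steps~1 and~3 match the paper's approach closely (Lemmas \ref{lem_unifesti_gencase2_RELOADED}, \ref{lem_quad_gronv_C1} for the a priori quadratic bound, Lemma \ref{lem_uniq_SPKRcase_BIS} for uniqueness), including the integration by parts under \eqref{THE_COND_CI} and the bilinear splitting for the difference.

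The gap is in Step~2. You propose to ``extract a weak limit'' from the uniform bound in $\mathbb{X}_S$ and then pass to the limit in the nonlinear term of \eqref{main_MOLL}. But the nonlinearity $\brho^\varepsilon\cdot(b^\varepsilon\star\brho^\varepsilon)$ is \emph{quadratic} in $\brho^\varepsilon$: weak convergence of $\brho^{\varepsilon_k}$ in $\mathbb{X}_S$ (even if it were available --- note $B^{-\beta+\vartheta\Gamma}_{p',1}$ with fine index $1$ has poor weak-compactness properties, and the tightness in $\mathcal P(\R^d)$ you invoke gives only weak convergence of measures, not control in the Besov norm) does not by itself allow identification of the limit of the product. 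The paper does not use compactness here at all. Instead, Lemma \ref{SECOND_STAB_BIS} shows that $(\brho^{\varepsilon_k}_{t,\mu})_k$ is a \emph{Cauchy sequence} in the strong norm of $L^\infty_{{\rm w}_\theta}((t,S],B^{-\beta+\vartheta\Gamma}_{p',1})$ and in $L^\infty((t,S],L^1)$, by running essentially the same bilinear estimate as in your Step~3 on the difference $\brho^{\varepsilon_k}-\brho^{\varepsilon_j}$ and using the convergence $b^{\varepsilon_k}\to b$ from Proposition \ref{PROP_APPROX}. Strong convergence then makes the passage to the limit in \eqref{main_MOLL} straightforward (Lemma \ref{lem_ex_duha_SPKRcase_BIS}).

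This Cauchy argument is also where the parameter $\vartheta\in(0,1)$ actually enters: when $p\vee q\vee r=+\infty$, Proposition \ref{PROP_APPROX} only gives $|b^{\varepsilon_k}-b^{\varepsilon_j}|_{L^{\bar r}(B^{\tilde\beta}_{p,q})}\to 0$ for $\tilde\beta<\beta$, so one must give up a sliver of regularity ($\vartheta\Gamma$ instead of $\Gamma$) to absorb this loss --- a point your outline does not address.
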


 Let us sum up the strategy to derive weak and strong well posedness in our non-linear setting under \eqref{cond_gencase} and \eqref{cond_coeff_SPKR}, \eqref{THE_COND_CI} respectively \textcolor{black}{and in \textit{short time}}.
 In any case we need to establish \textit{a priori} estimates on the Fokker-Planck equation \eqref{PDE_EPS} associated with mollified \textcolor{black}{kernels} through its Duhamel representation \eqref{main_MOLL}.
 \begin{itemize}
\item[1.] In \cite{chau:jabi:meno:22-1}, under \eqref{cond_gencase}, we used the so-called \textit{de-quadrification} technique. In this case we cannot in some sense benefit from the smoothness of the initial condition since the whole regularity index associated with the Besov norm for which we are estimating the (mollified) density, is felt by  the gradient of the stable heat kernel \textcolor{black}{(see as well the related discussion in the proof of Lemma \ref{lem_unifesti_gencase2_RELOADED} below)}.
\item[2.] In the current work, under \eqref{cond_coeff_SPKR}-\eqref{THE_COND_CI}, we use techniques that are more \textit{common} in non-linear analysis and make a quadratic term appear. We will see below that in that case this approach allows to have an extra-margin,  what we already called \textcolor{black}{in the comments following Theorems \ref{main_thm_W} and \ref{main_thm_S}} the  \textcolor{black}{regularity gain factor} associated with the initial condition $\mu \in B_{p_0,q_0}^{\beta_0} $ \textcolor{black}{and the kernel integrability index $p$}, with respect to the former threshold appearing in \eqref{cond_gencase}.
We point out that, for $\beta=-1 $ we anyhow require some additional smoothness properties for the drift to handle this \textit{critical case}.
 \end{itemize}
To derive weak uniqueness, our approach consists in exploiting appropriate estimates on the density so that we can prove that the drift
$$\mathcal B_{\textcolor{black}{\brho}_{t,\mu}}(s,\cdot):=\int_{\R^d} b(t,y) \brho_{t,\mu}(r,\textcolor{black}{\cdot-}y)dy $$
belongs to \textcolor{black}{the time-space Lebesgue space} $L^{\sc}-L^\infty$ for an index $\sc$ which then allows to enter the framework of \cite{CdRM-20} where, in the linear setting, a parabolic bootstrap result was established for singular drifts.

We insist on the fact that appealing to product rules in Besov spaces precisely allows to derive the highest regularity order, with respect to what could have e.g. been done through easier embeddings, i.e. going back to Lebsegue spaces and then deteriorating the smoothness indexes. 
The point is then of course to derive the maximum regularity index for which the estimates on the equation with mollified coefficients work. We refer to the proof of Lemma \ref{lem_unifesti_gencase2_RELOADED} below for further details.

{\color{black} 

As previously mentioned, we provide in Appendix \ref{EXT_TO_LONG_TIME} an extension of the analysis below to the long time setting for   \textit{sufficiently small} initial data, in the considered setting of Besov spaces.
}

\textbf{About constants.} 
Introduce the parameter set
\begin{equation}\label{DEF_GRAND_THETA}
\Theta:=\begin{cases}\{d,\textcolor{black}{\kappa},r,p,q,\beta,|b|_{L^r(B_{p,q}^\beta)},p_0,q_0,\beta_0, |\mu|_{B_{p_0,q_0}^{\beta_0}}\},\ {\rm if} \beta\in (-1,0],\\
\{d,\textcolor{black}{\kappa},r,p,q,\beta,|b|_{L^r(B_{p,q}^\beta)},|\div(b)|_{L^r(B_{p,q}^\beta)},p_0,q_0,\beta_0, |\mu|_{B_{p_0,q_0}^{\beta_0}}\},\ {\rm if} \beta=-1.
\end{cases}
\end{equation}
Namely $\Theta $ gathers the various parameters appearing in Assumption \A{UE} and conditions {\color{black}\eqref{cond_coeff_SPKR}, \eqref{THE_COND_CI}} depending on the considered value of $\beta $.
In what follows we denote by $C:=C(\Theta)$ a generic constant depending on the parameter set $\Theta$ that may change from line to line. Other possible dependencies will be explicitly specified. \textcolor{black}{Importantly, $\Theta $ does not depend on time}.\\

We first begin with a Lemma giving a control of the Besov norm of the initial condition in the mollified Fokker-Planck equation \eqref{PDE_EPS}.

{\color{black}
\begin{lemme}[Besov controls for the convolution of the initial condition and the stable heat kernel]\label{controle_CI}
Define:
\begin{equation}\label{DEF_INDEXES_CI}
 p_1=\min(p_0,p'),\ q_1=q_0(1\vee \frac{p}{p_0'}),\ \beta_1=\beta_0(1\wedge \frac{p_0'}p),
\end{equation}
\textcolor{black}{again with the convention that $\infty/\infty=1 $}.
Then, for any $s \ge t$ satifying $s-t$ is small enough, it holds that for any ${\color{black}\gamma>0}$:
 \begin{align}
 \Big|\mu \star p^\alpha_{s-t}\Big|_{B^{ -\beta+{\color{black}\gamma}}_{p',1}} \le 
 C|\mu|_{B_{p_1,q_1}^{\beta_1}} (s-t)^{- \textcolor{black}{\frac{1}{\alpha}\left[{\color{black}\gamma} - \beta + \frac dp - \zeta_0\right]_+}}
 ,
 \label{NEW_CTR_CI}
\end{align} 
where $\zeta_0=\left(\beta_0 + \frac{d}{p_0'}\right)\left( 1 \wedge \frac{p_0'}{p} \right)$ is the \textit{regularity gain from the initial condition} defined in \eqref{def_zetra}.
\end{lemme}
\begin{proof}[Proof of Lemma \ref{controle_CI}] {\color{black} {\color{black}T}o obtain the desired result, we aim at applying Young's inequality \eqref{YOUNG}. When doing so, we need to distinguish between two cases depending on the position of $p'$ (dual integrability index of the singular convolution kernel $b$) w.r.t $p_0$ (integrability index of the initial condition).}  Recall that $p_1 = \min(p_0,p')$ and let us define
\begin{align*}
[\mathfrak p(p_1,p')]^{-1} &= 1 +  (p')^{-1} - (p_1)^{-1}=\left\lbrace\begin{array}{ll}
\displaystyle 1+(p')^{-1} - (p_0)^{-1} \text{ if } p_0 \le p', \\
 \displaystyle 1 \text{ if }p'<p_0.
\end{array}\right.\\
 \end{align*}
We claim that:
 \begin{equation}\label{HK_PARTIAL_1}
  \Big|\mu \star p^\alpha_{s-t}\Big|_{B^{ -\beta+{\color{black}\gamma}}_{p',1}} \le 
 C|\mu|_{B_{p_1,q_1}^{\beta_1}} |p_{s-t}^\alpha|_{B_{\mathfrak p(p_1,p'),1}^{-\beta+{\color{black}\gamma}-\beta_1}}.
 \end{equation}

\begin{itemize}
\item[$\bullet $]Assume first that $ p'\ge p_0$ ($\iff p\le p_0' $). {\color{black}We can then directly apply \eqref{YOUNG} which yields}
 \begin{eqnarray*}
\Big|\mu \star p^\alpha_{s-t}\Big|_{B^{ -\beta+{\color{black}\gamma}}_{p',1}} &\le& \cv|\mu|_{B^{\beta_0}_{p_0,q_0}} | p^\alpha_{s-t}|_{B^{ -\beta+{\color{black}\gamma}-\beta_0}_{\mathfrak p(p_0,p'), 1}}
 \end{eqnarray*}
so that \eqref{HK_PARTIAL_1} holds.
 
\item[$\bullet$] Assume now that $ p'< p_0$ ($\iff p>p_0' $). In this case, we must slightly reduce the contribution of the initial condition in order to apply Young's inequality. Since $\mu \in B_{1,\infty}^0$ as a probability law, see  \eqref{lem_proba_in besov}, we derive by interpolation that, see e.g. Theorem 4.29 in \cite{Sawano-18}, it also belongs to \textcolor{black}{$B_{p',q_0 p/p_0'}^{\beta_0 p_0'/p}=B_{p_1,q_1}^{\beta_1} $} in the current case, with $p_1,q_1,\beta_1 $ as in \eqref{DEF_INDEXES_CI}. Indeed, for $\lambda \in (0,1) $, since $\beta_0\ge 0 $,
$$[B_{1,\infty}^0,B_{p_0,q_0}^{\beta_0}]_\lambda=B_{\tilde p,\tilde q}^{\lambda \beta_0}, \ \frac{1}{\tilde p}=\lambda \frac{1}{p_0}+(1-\lambda), \frac 1{\tilde q}=\lambda\frac 1{q_0}.$$ 
Choosing $\tilde p=p'\in [1,p_0) $\footnote{\textcolor{black}{This choice is somehow natural since it induces no additional singularity on the heat kernel norm associated with the integrability exponent.}} yields $\lambda(1- 1/p_0)=1-(1/p')\iff \lambda=[(1/p)/(1/p_0')]=p_0'/p$ that 
\textcolor{black}{$\mu \in B_{p_1,q_1}^{\beta_1}$}. Applying \eqref{YOUNG}  we get in that case:
\begin{equation}\label{NEW_CTR_CI_ST}
 \Big|\mu \star p^\alpha_{s-t}\Big|_{B^{ -\beta+\gamma}_{p',1}}\le c_{\mathbf Y}|\mu|_{\textcolor{black}{B_{p_1,q_1}^{\beta_1}}} | p^\alpha_{s-t}|_{B^{ -\beta+\gamma-\textcolor{black}{\beta_1}}_{1, 1}},
 \end{equation}
which gain yields that \eqref{HK_PARTIAL_1} holds.\\
\end{itemize}

 {\color{black}Starting from \eqref{HK_PARTIAL_1}, we note that up to a possible slight modification of $\gamma$ when $p_1=p_0$, we can assume w.l.o.g. that, whenever $ -\beta+\gamma-\beta_0<0$, $-\beta+\gamma-\beta_0 \neq -d(1-1/\mathfrak p(p_0,p'))$ (the latter being equal to 0 when $p_1=p'$). We can therefore apply the stable-kernel estimate \eqref{SING_STABLE_HK} to obtain:}
 
 \begin{eqnarray*}
  \Big|\mu \star p^\alpha_{s-t}\Big|_{B^{ -\beta+{\color{black}\gamma}}_{p',1}} \le 
 C|\mu|_{B_{p_1,q_1}^{\beta_1}}(s-t)^{-\frac 1\alpha\left[-\beta+\gamma-\beta_1+d\left(1-\frac 1{\mathfrak p(p_1,p')}\right)\right]_+}.
 \end{eqnarray*}
 From the very definition of $\beta_1 = \beta_0(1\wedge p_0'/p)$ and $\mathfrak p(p_1,p')=1 +  (p')^{-1} - (p_1)^{-1}$, $p_1=p_0\wedge p'$, we have
 \begin{eqnarray*}
 -\beta+\gamma-\beta_1+d\left(1-\frac 1{\mathfrak p(p_1,p')}\right) &=& -\beta + \gamma + \frac dp -\beta_0\left(1\wedge \frac{p_0'}{p} \right)+ d\left(1-\frac 1{p_1}\right)\\
 &=&-\beta + \gamma + \frac dp -\beta_0\left(1\wedge \frac{p_0'}{p} \right)+ \frac d{p_0'}\left(1\wedge \frac{p_0'}{p}\right),
 \end{eqnarray*}
since $1-1/p_1 = (1-1/p_0)\mathbf 1_{p_0 \le p'} + (1-1/p')\mathbf 1_{p_0 >p'} = [1/p_0'] \mathbf 1_{p_0' \ge p} + [1/p]\mathbf 1_{p_0' <p} = [1/p_0'](1\wedge [p_0'/p])$.

Recalling now from \eqref{def_zetra} that $\zeta_0 = \left(\beta_0 + d/(p_0')\right)\left( 1 \wedge [p_0'/p] \right),$ we conclude
that for any $s>t$,
 \begin{align*}
 \Big|\mu \star p^\alpha_{s-t}\Big|_{B^{ -\beta+\Gamma}_{p',1}} \le 
 C|\mu|_{B_{p_1,q_1}^{\beta_1}} \textcolor{black}{(s-t)^{- \frac{1}{\alpha}\left[\Gamma - \beta + \frac dp - \zeta_0\right]_+}}.
\end{align*} 
\textit{i.e.} \eqref{NEW_CTR_CI} holds
. This concludes the proof.
\end{proof}

We now state a Lemma giving a control of the Besov norm \textcolor{black}{of the mollified Fokker-Planck equation \eqref{PDE_EPS}}.

\begin{lemme}[A priori estimates on the mollified density]\label{lem_unifesti_gencase2_RELOADED} 
Under 
\textbf{(C1)} or \textbf{(C2)}, 
setting
\begin{equation}\label{def_gamma}
\Gamma := \textcolor{black}{
\eta\left\{ \alpha-1+\beta-\frac \alpha r + \beta \ind_{\beta >-1}-\frac dp + \zeta_0 \right\},\quad \eta \in (0,1)},
\end{equation}
and 
\begin{equation}\label{def_thetas}
\theta =\textcolor{black}{ 
\frac 1\alpha\left\{-\beta + \frac dp - \zeta_0 + \left(\frac{1+\eta}{2\eta}\right) \Gamma\right\}}>0,
\end{equation}
for $\eta$ sufficiently close to 1, there exists {\color{black}$C_{\ref{lem_unifesti_gencase2_RELOADED}} := C(\Theta)>0$} such that for all {\color{black}$S \le T$},
\begin{eqnarray}
&&\sup_{s\in (t,S]}\Big\{ (s-t)^{\theta} |\brho_{t,\mu}^\varepsilon(s,\cdot)|_{B_{p',1}^{-\beta+ \Gamma}}\Big\} \le  \textcolor{black}{C_{\ref{lem_unifesti_gencase2_RELOADED}}\Bigg\{} \textcolor{black}{|\mu|_{B^{\beta_1}_{p_1,q_1}}}  (S-t)^{
\frac{1-\eta}{2\eta} \frac \Gamma{\alpha}
}
\notag\\
&&\qquad  +\left(|b|_{L^{r}(B^{\beta}_{p,q})} + |\div(b)|_{L^{r}(B^{\beta}_{p,q})}\ind_{\beta =-1} \right)  (S-t)^{
\frac{1-\eta}{2\eta}\frac \Gamma{\alpha}
}\bigg(\sup_{s\in (t,S]}\Big\{ (s-t)^{\theta} |\brho_{t,\mu}^\varepsilon(s,\cdot)|_{B_{p',1}^{-\beta+ \Gamma}}\Big\} \bigg)^2\textcolor{black}{\Bigg\}}.
\notag
\end{eqnarray}
\end{lemme}

\textcolor{black}{Write explicitly for the previous choice of $\Gamma,\theta $:
\begin{align*}
\theta =& \textcolor{black}{\frac 1\alpha}\left\{-\beta + \frac dp - \zeta_0 + \left(\frac{1+\eta}{2\eta}\right) \Gamma\right\}\\
=& \textcolor{black}{\frac 1\alpha}\left\{-\beta + \frac dp - \zeta_0 + \left(\frac{1+\eta}{2}\right) \left\{ \alpha-1+\beta-\frac \alpha r + \beta \ind_{\beta >-1}-\frac dp + \zeta_0 \right\}\right\}\\
=& \textcolor{black}{\frac 1\alpha}\left\{(-\beta + \frac dp - \zeta_0)\frac{1-\eta}2+ \left(\frac{1+\eta}{2}\right) \underbrace{\left\{ \alpha-1-\frac \alpha r + \beta \ind_{\beta >-1} \right\}}_{> 0}\right\},
\end{align*}
which can be made non-negative provided $\eta $ is sufficiently close to 1. Similarly observe that:
\begin{align}
\Gamma-\beta+\frac dp-\zeta_0=\eta\left\{ \alpha-1+\beta-\frac \alpha r + \beta \ind_{\beta >-1}-\frac dp + \zeta_0 \right\}-\beta+\frac dp-\zeta_0\notag\\
=\eta \underbrace{(\alpha-1-\frac \alpha r+\beta\ind_{\beta >-1})}_{> 0}+(\eta -1)(\beta-\frac dp+\zeta_0).\label{EXPR_GAMMA_MIN_GAMMA_0}
\end{align}
Hence, both quantities can be made positive, even when $\zeta_0 $ is large, provided $\eta $ is sufficiently close to 1. 
}

\begin{proof}[Proof under \textbf{(C1)}] \textcolor{black}{Recalling that we have assumed to work in small time, we can suppose w.l.o.g. that $T\le 1$}.
We start from the Duhamel formulation \eqref{main_MOLL} and apply the norm $|\cdot|_{B^{ -\beta+\Gamma}_{p',1}}$ for $\Gamma> 0 $ to be specified later on, \textcolor{black}{i.e. for the moment we take a generic $\Gamma $ and specify along the proof the constraints that are needed for the analysis to work keeping in mind that we want to take $\Gamma $ as large as possible}. For $s\in (t,T] $, it follows
\begin{eqnarray}
|\brho_{t,\mu}^\varepsilon(s,\cdot)|_{B^{-\beta+\Gamma}_{p',1}} \le  \Big|\mu \star p^\alpha_{s-t}\Big|_{B^{ -\beta+\Gamma}_{p',1}} +
\int_t^s dv \bigg|\Big(\mathcal B_{\brho_{t,\mu}^\varepsilon}^\varepsilon(v,\cdot) \cdot\brho_{t,\mu}^\varepsilon(v,\cdot)\Big) \star \nabla p^{\alpha}_{s-v}\Bigg|_{B^{-\beta+ \Gamma}_{p',1}}.\label{TRIANG_KR}
\end{eqnarray}
Applying successively \eqref{YOUNG} (with $m_1=1,m_2=\infty$), \eqref{PROD1}, \textcolor{black}{\eqref{YOUNG} again} and finally \eqref{BesovEmbedding} yields
\begin{eqnarray}
 &&	\Big|\Big(\mathcal B_{\brho_{t,\mu}^\varepsilon}^\varepsilon(v,\cdot) \cdot  \brho_{t,\mu}^\varepsilon(v,\cdot)\Big) \star \nabla p^{\alpha}_{s-v}\Big|_{B^{-\beta+\Gamma}_{p',1}} \notag\\
 &\le& C \Big|\Big(\mathcal B_{\brho_{t,\mu}^\varepsilon}^\varepsilon(v,\cdot) \cdot  \brho_{t,\mu}^\varepsilon(v,\cdot)\Big) \Big|_{B^{\Gamma}_{p',\infty}} \Big|\nabla p^{\alpha}_{s-v}\Big|_{B^{-\beta}_{1,1}}\notag\\
 &\le & C\Big|\mathcal B_{\brho_{t,\mu}^\varepsilon}^\varepsilon(v,\cdot)\Big|_{B^{ \Gamma}_{\infty,\infty}} \Big|\brho_{t,\mu}^\varepsilon(v,\cdot)\Big|_{B^{ \Gamma}_{p',1}} \Big|\nabla p^{\alpha}_{s-v}\Big|_{B^{-\beta}_{1,1}}\notag\\
 &\le & C|b^\varepsilon(v,\cdot)|_{B^{\beta}_{p,q}}
 \Big|\brho_{t,\mu}^\varepsilon(v,\cdot)\Big|_{B^{ -\beta+\Gamma}_{p',q'}} \Big|\brho_{t,\mu}^\varepsilon(v,\cdot)\Big|_{B^{ \Gamma}_{p',1}} \Big|\nabla p^{\alpha}_{s-v}\Big|_{B^{-\beta}_{1,1}}\notag\\
 &\le& C|b^\varepsilon(v,\cdot)|_{B^{\beta}_{p,q}}\Big|\brho_{t,\mu}^\varepsilon(v,\cdot)\Big|_{B^{ -\beta+\Gamma}_{p',1}}^2 \Big|\nabla p^{\alpha}_{s-v}\Big|_{B^{-\beta}_{1,1}}.\label{first_ctr_lem6_c1} 
\end{eqnarray}

  Coming back to \eqref{TRIANG_KR}, we derive from \eqref{first_ctr_lem6_c1} and the estimate on the initial condition \eqref{NEW_CTR_CI} from Lemma \ref{controle_CI}, using  \eqref{SING_STABLE_HK}, that
\begin{eqnarray} 
|\brho_{t,\mu}^\varepsilon(s,\cdot)|_{B^{-\beta+\Gamma}_{p',1}}
&\le&  C\textcolor{black}{|\mu|_{B^{\beta_1}_{p_1,q_1}}} (s-t)^{- \frac{1}{\alpha}\textcolor{black}{\left[\Gamma - \beta + \frac dp - \zeta_0\right]_+}}
\notag\\
&& +C\int_t^s \frac{dv}{(s-v)^{\frac {-\beta+1}\alpha}
} 
| b^\varepsilon(v,\cdot)|_{B^{\beta}_{p,q}} | \brho_{t,\mu}^\varepsilon(v,\cdot)|_{B^{ -\beta+\Gamma}_{p',1}}^2.\label{BEFORE_NOR}
\end{eqnarray}
 Applying next the $L^1:L^r-L^{r'}$-H\"older inequality in time we get: 
\begin{eqnarray}
|\brho_{t,\mu}^\varepsilon(s,\cdot)|_{B^{-\beta+ \Gamma}_{p',1}} &\le&  \textcolor{black}{C \Bigg\{} \textcolor{black}{|\mu|_{B^{\beta_1}_{p_1,q_1}}}  (s-t)^{- \frac{1}{\alpha}\textcolor{black}{\left[\Gamma - \beta + \frac dp - \zeta_0\right]_+}}
\notag\\
&& +|b|_{L^{r}(B^{\beta}_{p,q})}  \Bigg(\int_t^s \frac{dv}{(s-v)^{\frac {-\beta+1}\alpha r'} 
}  | \brho_{t,\mu}^\varepsilon(v,\cdot)|_{B^{-\beta+\Gamma}_{p', 1}}^{2r'} \Bigg)^{\frac 1{r'}}\textcolor{black}{\Bigg\}}\label{CI_EPS_AVANT_GR_QUADRA_PROOF_2}.
\end{eqnarray}
In order to have time integrable singularities in the former integral we assume that:
\begin{equation}\label{C1ST_a}\tag{a}
\frac 1{r'}-\frac{1-\beta}\alpha >0.
\end{equation}
\textcolor{black}{Let now  $\theta$  be a non negative parameter to be calibrated. We have} 
from \eqref{CI_EPS_AVANT_GR_QUADRA_PROOF_2}:
\begin{eqnarray}
&&\sup_{s\in (t,S]}\Big\{(s-t)^{\theta} |\brho_{t,\mu}^\varepsilon(s,\cdot)|_{B_{p',1}^{-\beta+ \Gamma}}\Big\} \le \textcolor{black}{C\Bigg\{} \textcolor{black}{|\mu|_{B^{\beta_1}_{p_1,q_1}}} \max_{s\in(t,S]} \Big\{(s-t)^{\theta- \frac{1}{\alpha}\textcolor{black}{\left[\Gamma - \beta + \frac dp - \zeta_0\right]_+}}\Big\} \label{CI_EPS_AVANT_GR_QUADRA_PROOF_2_BIS}\\
&&\qquad  +|b|_{L^{r}(B^{\beta}_{p,q})}  \max_{s\in(t,S]}\Bigg\{(s-t)^{\theta}\Bigg(\int_t^s \frac{dv}{(s-v)^{\frac{-\beta+1}{\alpha} r'}} | \brho_{t,\mu}^\varepsilon(v,\cdot)|_{B^{ -\beta+\Gamma}_{p', 1}}^{2r'}  \Bigg)^{\frac 1{r'}}\Bigg\}\textcolor{black}{\Bigg\}}.\notag
\end{eqnarray}
To obtain homogeneous quantities, we now have to singularize the integrand in the above r.h.s. To do so, we need the exponent $\theta$ to be so that $(\textcolor{black}{v}-t)^{-2\theta r'}$ is integrable around $t$. Thus, 
\begin{equation}\label{C1ST_b}\tag{b}
\theta< \frac 1{2r'}. 
\end{equation}
We now write
\begin{eqnarray*}
&&\int_t^s \frac{dv}{(s-v)^{\frac{-\beta+1}{\alpha} r'}}  | \brho_{t,\mu}^\varepsilon(v,\cdot)|_{B^{ -\beta+\Gamma}_{p', 1}}^{2r'}\\
&=&\int_t^s \frac{dv}{(s-v)^{\frac{-\beta+1}{\alpha} r'} (v-t)^{2\theta r'}} \Big((v-t)^{\theta} | \brho_{t,\mu}^\varepsilon(v,\cdot)|_{B^{ -\beta+\Gamma}_{p', 1}}\Big)^{2r'}\\
&\le& \sup_{v\in (t,s]}\Big\{(v-t)^{2\theta r'}|\brho_{t,\mu}^\varepsilon(v,\cdot)|^{2r'}_{B^{-\beta+ \Gamma}_{p',1}}\Big\} \int_t^s \frac{dv}{(s-v)^{\frac{-\beta+1}{\alpha} r'} \,  (v-t)^{2\theta r'}}.
\end{eqnarray*}
\textcolor{black}{Under the conditions \eqref{C1ST_a} and \eqref{C1ST_b} we have assumed, the above time singularities are integrable \textcolor{black}{and a change of variable (which makes the Beta function appear)
 yields}
 :
 }
\begin{eqnarray*}
&&\int_t^s \frac{dv}{(s-v)^{\frac{-\beta+1}{\alpha} r'}}  | \brho_{t,\mu}^\varepsilon(v,\cdot)|_{B^{ -\beta+\Gamma}_{p', 1}}^{2r'}\\
&\le&{\color{black}C} \sup_{v\in [t,s)}\Big\{ (s-t)^{2{\theta}r'} |\brho_{t,\mu}^\varepsilon(v,\cdot)|^{2r'}_{B^{-\beta+\Gamma}_{p',1}}\Big\}(s-t)^{1-\frac{1-\beta}{\alpha}r'-2\theta r'} 
\end{eqnarray*}

Plugging the above estimate \textcolor{black}{into} \eqref{CI_EPS_AVANT_GR_QUADRA_PROOF_2_BIS} yields
\begin{eqnarray}
&&\sup_{s\in (t,S]}\Big\{ (s-t)^{\theta} |\brho_{t,\mu}^\varepsilon(s,\cdot)|_{B_{p',1}^{-\beta+ \Gamma}}\Big\} \le  \textcolor{black}{C\Bigg\{} \textcolor{black}{|\mu|_{B^{\beta_1}_{p_1,q_1}}} \sup_{s\in (t,S]} \Big\{(s-t)^{\theta- \frac{1}{\alpha}\textcolor{black}{\left[\Gamma - \beta + \frac dp - \zeta_0\right]_+}}\Big\} \label{AVANT_EQUILIBRE}\\
&&\qquad  +|b|_{L^{r}(B^{\beta}_{p,q})}   \sup_{s\in (t,S]} \left\{(s-t)^{\frac 1{r'}-\frac{1-\beta}{\alpha}-\theta}\right\}\bigg(\sup_{s\in (t,S]}\Big\{(s-t)^{\theta} |\brho_{t,\mu}^\varepsilon(s,\cdot)|_{B_{p',1}^{-\beta+ \Gamma}}\Big\} \bigg)^2\textcolor{black}{\Bigg\}}.\notag
\end{eqnarray}
Our objective now consists in equilibrating and removing the singularities \textcolor{black}{possibly associated with the initial condition, $[\Gamma - \beta +  d/p - \zeta_0]_+/\alpha$}, and of the integral term, $1/r'-(1-\beta)/\alpha$, in the above in the chosen \textit{small time} setting. 
We could now consider two cases depending on the sign of $-\beta+d/p-\zeta_0 $. Once again, when it is non-negative,  
it means that the exponent which can be absorbed by the initial condition is not greater than the intrinsic regularity needed to give a meaning to convoluted drift by duality), whereas when $-\beta+d/p-\zeta_0 < 0 $, i.e. when the regularity of the initial condition somehow dominates, it somehow means that taking $\Gamma=\zeta_0+\beta-d/p$ we could a priori get rid of  a normalizing factor in time to estimate the density. 

Keeping in mind that we anyhow want to derive estimates for the biggest possible $\Gamma $ (in connection with weak/strong well-posedness for the corresponding SDE) we anyhow will use a time normalization in that setting too, which in fact leads to a choice of $\Gamma $ which does not depend on the sign of $-\beta+d/p-\zeta_0 $.

\textcolor{black}{For the moment we simply assume that $\Gamma \ge [- \beta +  d/p- \zeta_0]_+ $. Since  $\Gamma>0 $, this actually only adds a constraint when $[- \beta +  d/p- \zeta_0]_+\neq 0\iff \zeta_0<  - \beta +  d/p$ (small initial regularity gain)}.
The choice of $\Gamma $ as indicated in the Lemma will actually follow from the various constraints we need to satisfy.

 \begin{trivlist}
\item[$\bullet$] Let us consider the case $-\beta+d/p-\zeta_0 \ge 0   $. In order to balance the singularities for the term associated with the initial condition and to keep a positive exponent in time for the integrated term in \eqref{AVANT_EQUILIBRE}, we need to have:
\end{trivlist}
$$\exists \Gamma >0,\, \theta \ge 0,\, {\rm s.t. }\quad \eqref{C1ST_a}:\, \frac 1{r'}- \frac{1-\beta}\alpha >0,\quad \eqref{C1ST_b}:\, \theta < \frac{1}{2r'}\, \text{ and }\frac{\Gamma - \beta + \frac dp- \zeta_0}{\alpha} < \theta \textcolor{black}{<} \frac{1}{r'}-\frac{(1-\beta)}{\alpha}.$$
We first note that, as for any $\beta\in (-1,0]$, $\alpha \in (1,2]$ and $r\ge 1$, $[1/r'-(1-\beta)/\alpha] \le 1/(2r') \iff \alpha(1-1/r) \le 2(1-\beta)$ hold, one may rewrite
\begin{equation}\label{BD_FOR_GAMMA}
\exists \Gamma >0,\, \theta \ge 0,\, {\rm s.t. }\quad \frac{\Gamma - \beta + \frac dp- \zeta_0}{\alpha} < \theta < \frac{1}{r'}-\frac{(1-\beta)}{\alpha}.
\end{equation}
We thus need
$$ \left\{\frac 1\alpha\left[- \beta + \frac dp- \zeta_0\right] <  \frac1{r'}-\frac{1-\beta}{\alpha}  \iff 1-\alpha + \frac \alpha r + \frac dp   -\zeta_0 -\beta  < \beta \text{ and }\textcolor{black}{1-\alpha + \frac \alpha r  < \beta}\right\} \Leftrightarrow \eqref{cond_coeff_SPKR}.$$
This allows to choose $\Gamma/\alpha$ as \textcolor{black}{fraction} of the distance between the exponent related to the initial condition, \textcolor{black}{$(- \beta + d/p- \zeta_0)/\alpha$} and of the distance to the singularity of the norm of the heat kernel, $1/r'-(1-\beta)/\alpha$. Then, to equilibrate and avoid the explosion of the time contribution in \eqref{AVANT_EQUILIBRE}, a reasonable choice for $\theta$ is the middle point between the two exponents. This gives, for $\eta \in (0,1)$:
$$ \Gamma/\alpha := \eta \left( [1/r'-(1-\beta)/\alpha] - [- \beta + d/p- \zeta_0]/\alpha]\right),\quad \theta :=  \left[- \beta + d/p- \zeta_0\right]/\alpha  + \frac{1+\eta}{2\eta} \Gamma/\alpha,$$
which equivalently rewrites:
\begin{equation}\label{def_gamma_theta1}
\textcolor{black}{\Gamma = \eta \left\{\alpha-1+ 2\beta - \frac \alpha r - \frac dp + \zeta_0 \right\},\quad \alpha \theta = \left\{-\beta + \frac dp -\zeta_0 \right\} + \frac 1{2\eta}\left(1+\eta\right) \Gamma}.
\end{equation}

We can now plug the parameters $\Gamma$ and $\theta$  we chose in \eqref{AVANT_EQUILIBRE} to obtain the following estimate:
\begin{eqnarray}
&&\sup_{s\in (t,S]}\Big\{ (s-t)^{\theta} |\brho_{t,\mu}^\varepsilon(s,\cdot)|_{B_{p',1}^{-\beta+ \Gamma}}\Big\} \le  \textcolor{black}{C\Bigg\{} \textcolor{black}{|\mu|_{B^{\beta_1}_{p_1,q_1}}} \sup_{s\in (t,S]} \Big\{(s-t)^{\frac{1-\eta}{2\eta}\frac \Gamma{\alpha}}\Big\} \label{APRES_EQUILIBRE}\\
&&\qquad  +C_1|b|_{L^{r}(B^{\beta}_{p,q})}   \sup_{s\in (t,S]} \left\{(s-t)^{\frac{1-\eta}{2\eta}\frac \Gamma{\alpha}}\right\}\bigg(\sup_{s\in (t,S]}\Big\{ (s-t)^{\theta} |\brho_{t,\mu}^\varepsilon(s,\cdot)|_{B_{p',1}^{-\beta+ \Gamma}}\Big\} \bigg)^2\textcolor{black}{\Bigg\}}.\notag
\end{eqnarray}

\begin{trivlist}
\item[$\bullet$] Let us consider the case $-\beta+d/p-\zeta_0 <0   $. Actually, keeping in mind that we want to obtain the biggest admissible $\Gamma$ in the estimates, in connection with weak/strong uniqueness, we can keep the previous choice of $\Gamma $ in \eqref{def_gamma_theta1} which almost saturates the inequality in \eqref{BD_FOR_GAMMA} (when $\eta $ is close to 1).

Hence, in the considered case, \eqref{APRES_EQUILIBRE} still holds under the sole condition
$$\frac 1{r'}- \frac{1-\beta}\alpha >0\iff  1-\alpha + \frac \alpha r  < \beta$$
which is precisely in that case the condition appearing in \eqref{cond_coeff_SPKR}.

\end{trivlist}

This concludes the proof under \A{C1}.\\
\end{proof}

{\color{black}
\begin{proof}[Proof under \textbf{(C2)}]

Let us now restart from the Duhamel formulation \eqref{main_MOLL} under \A{C2}. In the current short time setting, we precisely rebalance the gradient through an integration by parts to alleviate the time singularity on the heat kernel.
We get,
\begin{eqnarray}
|\brho_{t,\mu}^\varepsilon(s,\cdot)|_{B^{1+\Gamma}_{p',1}} &\le&   |\mu \star p^\alpha_{s-t}|_{B^{1+\Gamma}_{p',1}}
+ \int_{t}^s dv\Big|\Big(\div\big(\mathcal B_{\brho_{t,\mu}^\varepsilon}^\varepsilon(v,\cdot)\big) \brho_{t,\mu}^\varepsilon(v,\cdot)\Big) \star  p^{\alpha}_{s-v}\Big|_{B^{1+\Gamma}_{p',1}}\notag\\
&&+ \int_{t}^s dv \Big|\Big(\mathcal B_{\brho_{t,\mu}^\varepsilon}^\varepsilon(v,\cdot) \cdot \nabla \brho_{t,\mu}^\varepsilon(v,\cdot)\Big) \star   p^{\alpha}_{s-v}(\cdot)\Big|_{B^{1+\Gamma}_{p',1}}\label{norm_IPP}.
\end{eqnarray}

Applying successively \eqref{YOUNG} (with $m_1=1,m_2=\infty$), \eqref{PROD1}, \textcolor{black}{\eqref{YOUNG} again} and finally \eqref{BesovEmbedding} yields
\begin{eqnarray*}
 &&	\Big|\textcolor{black}{\Big(}\div\Big(\mathcal B_{\brho_{t,\mu}^\varepsilon}^\varepsilon(v,\cdot) \Big)   \brho_{t,\mu}^\varepsilon(v,\cdot)\textcolor{black}{\Big)} \star  p^{\alpha}_{s-v}\Big|_{B^{1+\Gamma}_{p',1}} \\
 &\le& C \Big|\div\Big(\mathcal B_{\brho_{t,\mu}^\varepsilon}^\varepsilon(v,\cdot)\Big)   \brho_{t,\mu}^\varepsilon(v,\cdot) \Big|_{B^{\Gamma}_{p',\infty}} \Big| p^{\alpha}_{s-v}\Big|_{B^{1}_{1,1}}\\
 &\le & C\Big|\div \Big(\mathcal B_{\brho_{t,\mu}^\varepsilon}^\varepsilon(v,\cdot)\Big)\Big|_{B^{ \Gamma}_{\infty,\infty}} \Big|\brho_{t,\mu}^\varepsilon(v,\cdot)\Big|_{B^{ \Gamma}_{p',1}} \Big| p^{\alpha}_{s-v}\Big|_{B^{1}_{1,1}}\\
 &\le & C|\div(b^\varepsilon(v,\cdot))|_{B^{-1}_{p,q}}
 \Big|\brho_{t,\mu}^\varepsilon(v,\cdot)\Big|_{B^{ 1+\Gamma}_{p',q'}} \Big|\brho_{t,\mu}^\varepsilon(v,\cdot)\Big|_{B^{ 1+\Gamma}_{p',1}} \Big| p^{\alpha}_{s-v}\Big|_{B^{1}_{1,1}}\le C|\div(b^\varepsilon(v,\cdot))|_{B^{-1}_{p,q}}\Big|\brho_{t,\mu}^\varepsilon(v,\cdot)\Big|_{B^{ 1+\Gamma}_{p',1}}^2 \Big| p^{\alpha}_{s-v}\Big|_{B^{1}_{1,1}}.\nonumber 
\end{eqnarray*}
Similarly,
\begin{eqnarray*}
 &&	\Big|\mathcal B_{\brho_{t,\mu}^\varepsilon}^\varepsilon(v,\cdot) \cdot\nabla   \brho_{t,\mu}^\varepsilon(v,\cdot) \star  p^{\alpha}_{s-v}\Big|_{B^{1+\Gamma}_{p',1}} \\
 &\le& C \Big|\mathcal B_{\brho_{t,\mu}^\varepsilon}^\varepsilon(v,\cdot)\cdot \nabla   \brho_{t,\mu}^\varepsilon(v,\cdot) \Big|_{B^{\Gamma}_{p',\infty}} \Big| p^{\alpha}_{s-v}\Big|_{B^{1}_{1,1}}\\
 &\le & C\Big|\mathcal B_{\brho_{t,\mu}^\varepsilon}^\varepsilon(v,\cdot)\Big|_{B^{ \Gamma}_{\infty,\infty}} \Big|\nabla \brho_{t,\mu}^\varepsilon(v,\cdot)\Big|_{B^{ \Gamma}_{p',1}} \Big| p^{\alpha}_{s-v}\Big|_{B^{1}_{1,1}}\\
 &\underset{\eqref{LO}}{\le} & C|b^\varepsilon\textcolor{black}{(v,\cdot)}|_{B^{-1}_{p,q}}
 \Big|\brho_{t,\mu}^\varepsilon(v,\cdot)\Big|_{B^{ 1+\Gamma}_{p',q'}} \Big|\brho_{t,\mu}^\varepsilon(v,\cdot)\Big|_{B^{ 1+\Gamma}_{p',1}} \Big| p^{\alpha}_{s-v}\Big|_{B^{1}_{1,1}}\le C|b^\varepsilon(v,\cdot)|_{B^{-1}_{p,q}}\Big|\brho_{t,\mu}^\varepsilon(v,\cdot)\Big|_{B^{ 1+\Gamma}_{p',1}}^2 \Big| p^{\alpha}_{s-v}\Big|_{B^{1}_{1,1}}.\nonumber 
\end{eqnarray*}
\textcolor{black}{Note from the above bounds that the terms $b^\varepsilon, {\rm div}(b^\varepsilon)$ naturally appear with same norm.} \textcolor{black}{Using again \eqref{NEW_CTR_CI} for the initial condition}, the two above estimates and \eqref{BEFORE_NOR} 
with $\beta=-1 $ in \eqref{norm_IPP},  \textcolor{black}{we obtain} thanks to \eqref{SING_STABLE_HK} that
\begin{eqnarray*}
&&| \brho_{t,\mu}^\varepsilon(s,\cdot)|_{B^{1+\Gamma}_{p',1}} \notag\\
&\le&  \textcolor{black}{C\Bigg\{}|\mu|_{\textcolor{black}{B^{\beta_1}_{p_1,q_1}}} (s-t)^{- \frac{1}{\alpha}\left[\Gamma + 1 + \frac dp - \zeta_0\right]}
+\int_{\textcolor{black}{t}}^s \frac{dv}{(s-v)^{\frac 1 \alpha}} \left(|  b^\varepsilon(v,\cdot) |_{B^{-1}_{p,q}} +|  \div(b^\varepsilon(v,\cdot)) |_{B^{-1}_{p,q}}\right)  |\brho_{t,\mu}^\varepsilon(v,\cdot) |_{B^{1+\Gamma}_{p', 1}}^2 \textcolor{black}{\Bigg\}}.
\end{eqnarray*}
\textcolor{black}{Applying} the $L^r-L^{r'}$ H\"older inequality in time in the above equation, we get
\begin{eqnarray*}
|\brho_{t,\mu}^\varepsilon(s,\cdot)|_{B^{1+\Gamma}_{p',1}} &\le& \textcolor{black}{C\Bigg\{}|\mu|_{\textcolor{black}{B^{\beta_1}_{p_1,q_1}}} (s-t)^{- \textcolor{black}{\frac{1}{\alpha}\left[\Gamma + 1 + \frac dp - \zeta_0\right]_+}} \\
&&+\Big(\big| b^\varepsilon \big|_{L^{r}(B^{-1}_{p,q})} +|  \div(b^\varepsilon) |_{L^r(B^{-1}_{p,q})}\Big) \Bigg(\int_t^s \frac{dv}{\textcolor{black}{(s-v)^{\frac {r'}\alpha}}}  | \brho_{t,\mu}^\varepsilon(v,\cdot)|_{B^{1+\Gamma}_{p', 1}}^{2r'} \Bigg)^{\frac 1{r'}}\textcolor{black}{\Bigg\}}. \label{PREAL_MULT_C2}
\end{eqnarray*}
We now multiply both sides by $(s-t)^{\theta}$, argue as we did to pass from \eqref{CI_EPS_AVANT_GR_QUADRA_PROOF_2} to \eqref{AVANT_EQUILIBRE} to deduce that
\begin{eqnarray*}
&&\sup_{s \in (t,S]}\Big\{(s-t)^{\theta}|\brho_{t,\mu}^\varepsilon(s,\cdot)|_{B^{1+\Gamma}_{p',1}}\Big\} \le  \textcolor{black}{C\Bigg\{}|\mu|_{\textcolor{black}{B^{\beta_1}_{p_1,q_1}}} \max_{s\in(t,S]} \Big\{(s-t)^{\theta- \textcolor{black}{\frac{1}{\alpha}\left[\Gamma + 1 + \frac dp - \zeta_0\right]_+}}\Big\}\notag\\
&&+\Big(\big|  b \big|_{L^{r}(B^{-1}_{p,q})}+|{\div}(b)|_{L^r(B_{\textcolor{black}{p,q}}^{-1})}\Big)\sup_{s\in (t,S]} \left\{(s-t)^{\frac 1{r'}-\frac{1}{\alpha}-\theta}\right\}\bigg(\sup_{s\in (t,S]}\Big\{ (s-t)^{\theta} |\brho_{t,\mu}^\varepsilon(s,\cdot)|_{B_{p',1}^{1 + \Gamma}}\Big\} \bigg)^2,\notag 
\end{eqnarray*}
where we implicitly assumed that the following conditions hold to have time integrable singularities:
$$2\theta r'<1\iff \theta <\frac {1}{2r'}, \text{ and } \frac 1{r'} - \frac 1\alpha >0.$$
\textcolor{black}{We now distinguish as above two cases depending on the sign of $1+d/p-\zeta_0 $.}

\begin{trivlist}
\item[$\bullet$] Let us consider the case $1+d/p-\zeta_0 \ge 0   $. In order to balance the singularities for the term associated with the initial condition and to keep a positive exponent in time for the integrated term in the current \textit{small time} regime, we need to have:
\end{trivlist}
\begin{equation}\label{INEQ_FOR_GAMMA_C2}
\exists \Gamma >0,\, \theta \ge 0,\, {\rm s.t. }\quad \frac{\Gamma + 1+  \frac dp- \zeta_0}{\alpha} < \theta < \frac{1}{r'}-\frac{1}{\alpha}.
\end{equation}
As $1/r' - 1/\alpha \le 1/(2r') \Leftrightarrow \alpha(1-1/r) \le 2$ is always satisfied, conditions reduce to
$$\left\{\exists \Gamma >0,\, \theta \ge 0,\, {\rm s.t. }\quad \frac{\Gamma + 1+  \frac dp- \zeta_0}{\alpha} < \theta < \frac{1}{r'}-\frac{1}{\alpha} \text{ and }  \frac 1{r'} - \frac 1\alpha >0\right\} \Leftrightarrow \eqref{THE_COND_CI}.$$
Reasoning as we did for the proof under \A{C1} gives
\begin{equation}\label{def_gamma_theta1_C2}
{\color{black}\Gamma = \eta \left\{ \alpha - 2 - \frac \alpha r - \frac dp   +\zeta_0 \right\},\quad \alpha \theta = \left\{1  + \frac dp -\zeta_0 \right\} + \frac {1+\eta}{2\eta} \Gamma,\ \eta \in(0,1)}.
\end{equation}
%
It then follows that:
\begin{eqnarray}
&&\sup_{s\in (t,S]}\Big\{ (s-t)^{\theta} |\brho_{t,\mu}^\varepsilon(s,\cdot)|_{B_{p',1}^{-\beta+ \Gamma}}\Big\} \le  \textcolor{black}{C\Bigg\{} \textcolor{black}{|\mu|_{B^{\beta_1}_{p_1,q_1}}}  (S-t)^{\frac{1-\eta}{2\eta}\frac \Gamma{\alpha}}\notag 
\\
&&\qquad  +C_1\Big(|b|_{L^{r}(B^{-1}_{p,q})}+|\div(b)|_{L^{r}(B^{-1}_{p,q})}\Big)   
\textcolor{black}{(S-t)^{\frac{1-\eta}{2\eta} \frac \Gamma{\alpha}}}\bigg(\sup_{s\in (t,S]}\Big\{(s-t)^{\theta} |\brho_{t,\mu}^\varepsilon(s,\cdot)|_{B_{p',1}^{-\beta+ \Gamma}}\Big\} \bigg)^2\textcolor{black}{\Bigg\}},\label{APRES_EQUILIBRE_C2}
\end{eqnarray}
provided again that $ 1/r'-1/\alpha<1/(2r')\iff 1/r'<2\alpha\iff \alpha(1-\frac 1r)<2$.

\begin{trivlist}
\item[$\bullet $] Let us consider the case $1+d/p-\zeta_0 <0   $. We then again choose $\Gamma $ as in \eqref{def_gamma_theta1_C2}  which almost saturates the inequality in \eqref{INEQ_FOR_GAMMA_C2} (for $\eta $ close to 1). We then again get \eqref{APRES_EQUILIBRE_C2}
under the sole condition
$$\frac 1{r'}- \frac{1}\alpha >0\iff  1-\alpha + \frac \alpha r  < 0,$$
which is precisely in that case the condition appearing in \eqref{THE_COND_CI}.
\end{trivlist}

\end{proof}
}


\begin{lemme}[A priori control through a Gronwall type inequality with quadratic growth]\label{lem_quad_gronv_C1} Under \textbf{(C1)} or \textbf{(C2)} 
and for $\theta, \Gamma$ as in Lemma \ref{lem_unifesti_gencase2_RELOADED} and Theorem \ref{main_thm_W}, 
we have that there exists $\mathcal T_1\le T $ and \textcolor{black}{$C_{\ref{lem_quad_gronv_C1}}$ s.t. uniformly in $\varepsilon $}, for any $\textcolor{black}{S\le  \mathcal T_1}$:
\begin{eqnarray}\label{Unifestim_quadrgronwall_C1}
 \sup_{s\in (t,S]}[(s-t)^{\theta} |\brho_{t,\mu}^\varepsilon(s,\cdot)|_{B^{ \textcolor{black}{-\beta}+\Gamma}_{p',{\color{black}1}}}]
\le 
{\color{black}C_{\ref{lem_quad_gronv_C1}}}.
\end{eqnarray} 
\end{lemme}
}

\begin{proof} 
 {\color{black} From the previous Lemma {\color{black}\ref{lem_unifesti_gencase2_RELOADED}}  we have that for any $\textcolor{black}{S\le T}$, the mapping $$f_t^{\textcolor{black}{\varepsilon}}:s\in \textcolor{black}{(}t,S]\mapsto f_t^{\textcolor{black}{\varepsilon}}(s):=\sup_{v\in (t,s]}(v-t)^{\theta} |\brho_{t,\mu}^\varepsilon(v,\cdot)|_{B^{ -\beta+\Gamma}_{p',{\color{black}1}}}$$ satisfies an inequality of the form:
\begin{equation}\label{PREAL_TRINOME}
0 \le  a_t(s) - f_t^{\textcolor{black}{\varepsilon}}(s)+c_t(s)(f_t^{\textcolor{black}{\varepsilon}}(s))^2,\,t< s\le S,
\end{equation}
where, 
\begin{eqnarray*}
a_t(s)&=& C_{\ref{lem_unifesti_gencase2_RELOADED}} |\mu|_{\textcolor{black}{B^{\beta_1}_{p_1,q_1}}} 
(s-t)^{\frac{1-\eta}{2\eta} \frac \Gamma{\alpha}} 
=  C_{\ref{lem_unifesti_gencase2_RELOADED}}  c_0  (\textcolor{black}{s}-t)^{\frac{1-\eta}{2\eta} \frac \Gamma{\alpha}
},\\
c_t(s)&=& C_{\ref{lem_unifesti_gencase2_RELOADED}} \textcolor{black}{\Big(}| b|_{L^{r}(B^{\beta}_{p,q})}+\textcolor{black}{| {\rm div}(b)|_{L^{r}(B^{\beta}_{p,q})}\ind_{\beta=-1}\Big)} 
(s-t)^{\frac{1-\eta}{2\eta} \frac \Gamma{\alpha}}
= : C_{\ref{lem_unifesti_gencase2_RELOADED}} c_b  (\textcolor{black}{s}-t)^{\frac{1-\eta}{2\eta} \frac \Gamma{\alpha}}
.
\end{eqnarray*}

Define, for $\mathcal T_1\in (0,T] $ to specify, the polynomial $P_{\mathcal T_1}(z) = a_t(\mathcal T_1) -z+c_t(\mathcal T_1)z^2$. \textcolor{black}{Since the time dependent coefficients $s\in (t,S]\mapsto a_t(s),c_t(s) $ are increasing,  we thus have from \eqref{PREAL_TRINOME} that} $P_{\mathcal T_1}(f_t^{\textcolor{black}{\varepsilon}}(s)) \ge 0$, $t< s\le S\le \mathcal T_1$. Moreover, as soon as $c_t(\mathcal T_1)a_t(\mathcal T_1) < 1/4$, \textcolor{black}{which will always be the case provided that $T$ is small enough}, this polynomial admits two positive roots and since from Lemma \ref{lem_cont0} we have that for every fixed $\varepsilon >0$, $s \mapsto f_t^\varepsilon (s)$ is continuous and $ f_t^\varepsilon (s) \to 0$ as $s \to t$, we obtain that $f_t^\varepsilon(s)$ is bounded by the smaller root of the polynomial, namely,
$$\forall t< s \le S,\, f_t^\varepsilon(s) \le \textcolor{black}{\frac{1-\sqrt{1-4c_t(\mathcal T_1)a_t(\mathcal T_1)}}{2c_t(\mathcal T_1)}}.$$
Setting then
$$\mathcal T_1 :=( t + \left[8C_{\ref{lem_unifesti_gencase2_RELOADED}}^2 c_0c_b\right]^
{-[\eta/(1-\eta)]\, [\alpha/\Gamma]
})\wedge T \Longrightarrow 4 c_t(\mathcal T_1)a_t(\mathcal T_1) \le \frac 12,
$$
we obtain from Taylor's formula that 
\begin{align*}
f_t^\varepsilon(s)&\le a_t(\mathcal T_1)+\frac 1{8c_t(\mathcal T_1)}\int_0^1 d\lambda y^2(1-y\lambda)^{-\frac 32}|_{y=4c_t(\mathcal T_1)a_t(\mathcal T_1)}\le a_t(\mathcal T_1)+\frac{16(c_t(\mathcal T_1)a_t(\mathcal T_1))^2}{2^{\frac 32}c_t(\mathcal T_1)}\\
&\le a_t(\mathcal T_1)(1+2^{\frac 52}c_t(\mathcal T_1)a_t(\mathcal T_1)):=C_{\ref{lem_quad_gronv_C1}},
\end{align*}
which by the previous choice of $\mathcal T_1 $ indeed only depends on the parameters. This gives the claim.
}
\end{proof}

{\color{black}
\begin{lemme}[Convergence of the mollified densities]\label{SECOND_STAB_BIS} 
{\color{black}Assume \textbf{(C1)} or  \textbf{(C2)} holds.  
For all $S$ smaller than the horizon time $\Tspkr$ given in Lemma \ref{lem_quad_gronv_C1}, for any decreasing sequence $(\varepsilon_k)_{k\ge 1} $ s.t. $\varepsilon_k \underset{k}{\longrightarrow} 0$, $\Big(\brho_{t,\mu}^{\varepsilon_k}\Big)_{k\ge 1} $ is a Cauchy sequence in $L_{{\rm w}_{\theta}}^{\infty}((t,S], B^{ -\beta+\vartheta\Gamma}_{p',1})$ (defined in \eqref{DEF_NORM_W}) with} $\theta,\Gamma$ as in Lemma \ref{lem_unifesti_gencase2_RELOADED} (see \eqref{def_gamma} and \eqref{def_thetas}), $\vartheta=1 $ if $p,q,r<+\infty $ and any $\vartheta\in (0,1) $ if $p\vee q\vee r=+\infty $.
In particular, there exists $\brho_{t,\mu}\in L_{{\ww}_{\theta}}^{\infty}((t,S], B^{-\textcolor{black}{\beta}+ \vartheta\Gamma}_{p',1}) $ s.t.
\begin{equation}\label{STRONG_CONV_2}
 \sup_{s\in (t,S]}\textcolor{black}{(s-t)^{\theta}}|(\brho^{\varepsilon_k}_{t,\mu}-\brho_{t,\mu})(s,\cdot)|_{B^{-\beta+\vartheta \Gamma}_{p',1}} +\sup_{s\in(t,S]}|\textcolor{black}{(\brho^{\varepsilon_k}_{t,\mu}-\brho_{t,\mu})(s,\cdot)}|_{L^1(\mathbb R^d)}\underset{k}{\longrightarrow} 0.
\end{equation}
\end{lemme}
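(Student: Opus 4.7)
The plan is to reproduce the Duhamel-based estimates of Lemma \ref{lem_unifesti_gencase2_RELOADED} for the difference $\Delta^{k,\ell}_{t,\mu}:=\brho_{t,\mu}^{\varepsilon_k}-\brho_{t,\mu}^{\varepsilon_\ell}$, treating the quadratic non-linearity as a bilinear form in which one of the two factors is controlled uniformly in the mollification parameter thanks to Lemma \ref{lem_quad_gronv_C1}. Subtracting the representations \eqref{main_MOLL} for the indices $k$ and $\ell$ and adding/subtracting the mixed term $\mathcal B^{\varepsilon_\ell}_{\brho_{t,\mu}^{\varepsilon_k}}\cdot\brho_{t,\mu}^{\varepsilon_k}$, one obtains
\begin{align*}
\Delta^{k,\ell}_{t,\mu}(s,y) &= -\int_t^s dv\,\Big[\bigl\{(b^{\varepsilon_k}-b^{\varepsilon_\ell})\star\brho_{t,\mu}^{\varepsilon_k}\cdot\brho_{t,\mu}^{\varepsilon_k}+\mathcal B^{\varepsilon_\ell}_{\Delta^{k,\ell}_{t,\mu}}\cdot\brho_{t,\mu}^{\varepsilon_k}\\
&\quad+\mathcal B^{\varepsilon_\ell}_{\brho_{t,\mu}^{\varepsilon_\ell}}\cdot\Delta^{k,\ell}_{t,\mu}\bigr\}(v,\cdot)\star\nabla p^\alpha_{s-v}\Big](y).
\end{align*}

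To the last two contributions I would apply exactly the same chain of inequalities \eqref{YOUNG}--\eqref{PROD1}--\eqref{EQ_DUALITY}--\eqref{SING_STABLE_HK} used in the proof of Lemma \ref{lem_unifesti_gencase2_RELOADED}. After H\"older's inequality in time and multiplication by $(s-t)^\theta$, this produces a bound of the form
\[
C\,|b|_{L^r(B_{p,q}^\beta)}\,(S-t)^{\eta/\alpha}\,M\,\|\Delta^{k,\ell}_{t,\mu}\|_{L^\infty_{{\rm w}_\theta}((t,S],B^{-\beta+\vartheta\Gamma}_{p',1})},
\]
where $M:=\sup_{k\ge 1}\|\brho^{\varepsilon_k}_{t,\mu}\|_{L^\infty_{{\rm w}_\theta}((t,S],B^{-\beta+\Gamma}_{p',1})}$ is finite by Lemma \ref{lem_quad_gronv_C1}. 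For the first contribution, I would proceed in the same way but use Proposition \ref{PROP_APPROX} on the drift difference: when $p,q,r<\infty$ the approximation holds in the full space $L^r(B^\beta_{p,q})$ and I could take $\vartheta=1$; in the endpoint case $p\vee q\vee r=\infty$ only $|b^{\varepsilon_k}-b^{\varepsilon_\ell}|_{L^{\bar r}(B^{\tilde\beta}_{p,q})}\to 0$ is available with $\tilde\beta<\beta$ and $\bar r<\infty$. To offset the regularity loss $\beta-\tilde\beta$ in the product rule \eqref{PROD1}, the density factor must be estimated in the marginally better space $B^{-\tilde\beta+\vartheta\Gamma}_{p',q'}$, which by \eqref{BesovEmbedding} is embedded in $B^{-\beta+\Gamma}_{p',1}$ as soon as $(1-\vartheta)\Gamma>\beta-\tilde\beta$: this trade-off is precisely the raison d'\^etre of the parameter $\vartheta\in(0,1)$.

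Gathering the three contributions I expect a self-improving inequality
\[
\|\Delta^{k,\ell}_{t,\mu}\|_{L^\infty_{{\rm w}_\theta}((t,S],B^{-\beta+\vartheta\Gamma}_{p',1})}\le C(S-t)^{\eta/\alpha}M\,\|\Delta^{k,\ell}_{t,\mu}\|_{L^\infty_{{\rm w}_\theta}((t,S],B^{-\beta+\vartheta\Gamma}_{p',1})}+\omega(k,\ell),
\]
with $\omega(k,\ell)\to 0$ as $k,\ell\to\infty$ by Proposition \ref{PROP_APPROX}. Taking $S<\Tspkr$ close enough to $t$ so that the prefactor $C(S-t)^{\eta/\alpha}M<1/2$ absorbs the linear term in the left-hand side, I conclude the Cauchy property on an initial subinterval; iterating this short-time contraction a finite number of times, restarting at each stage from the end-point value and using again the uniform bound of Lemma \ref{lem_quad_gronv_C1}, covers any $S<\Tspkr$ and produces the limit $\brho_{t,\mu}$ in $L^\infty_{{\rm w}_\theta}((t,S],B^{-\beta+\vartheta\Gamma}_{p',1})$.

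The delicate point is the calibration of $\vartheta$ in the endpoint regime $p\vee q\vee r=\infty$: one must jointly tune the defect $\beta-\tilde\beta$ of the approximation in Proposition \ref{PROP_APPROX} and the slack $(1-\vartheta)\Gamma$ on the density, while keeping $\bar r<\infty$ so that the H\"older step still yields an integrable time singularity. As for the $L^1$-convergence in \eqref{STRONG_CONV_2}, the densities $\brho^{\varepsilon_k}_{t,\mu}(s,\cdot)$ are non-negative with unit mass; the Besov convergence just obtained implies distributional convergence, hence the limit is non-negative with mass at most one by Fatou. Together with the tightness of the family (consequence of the uniform bounds of Lemma \ref{lem_quad_gronv_C1} and the embedding \eqref{BesovEmbedding}) the mass is conserved in the limit, so $\brho_{t,\mu}(s,\cdot)$ is a probability density, and Scheff\'e's lemma applied along an a.e.\ convergent subsequence upgrades the convergence to $L^1$; uniqueness of the limit then propagates to the full sequence.
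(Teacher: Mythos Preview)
Your treatment of the Besov part is essentially the paper's: same Duhamel difference, same add--and--subtract decomposition, same chain \eqref{YOUNG}--\eqref{PROD1}--\eqref{EQ_DUALITY}--\eqref{SING_STABLE_HK}, same handling of the endpoint case $p\vee q\vee r=\infty$ via the slack $(1-\vartheta)\Gamma$ traded against the regularity loss in Proposition~\ref{PROP_APPROX}. The absorption argument and the role of Lemma~\ref{lem_quad_gronv_C1} for the uniform bound $M$ also match.

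The genuine divergence is in the $L^1$ step, and there your argument has a gap. You claim tightness of $(\brho^{\varepsilon_k}_{t,\mu}(s,\cdot))_k$ in $\mathcal P(\R^d)$ as a ``consequence of the uniform bounds of Lemma~\ref{lem_quad_gronv_C1} and the embedding \eqref{BesovEmbedding}''. This is not correct: a bound in $B^{-\beta+\vartheta\Gamma}_{p',1}$ (or any $L^{p'}$ with $p'>1$) does not prevent mass from escaping to infinity. For instance $\phi_n:=n^{-1}\mathbf 1_{[n,2n]}$ on $\R$ has $\int\phi_n=1$ and $|\phi_n|_{L^{p'}}=n^{-1/p}\to 0$ for any finite $p>1$, so strong $L^{p'}$ convergence of probability densities can lose mass. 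Without tightness your Fatou/Scheff\'e route does not yield $\int\brho_{t,\mu}(s,\cdot)=1$. Moreover, even granting tightness, Scheff\'e gives $L^1$ convergence for each fixed $s$, whereas \eqref{STRONG_CONV_2} requires \emph{uniform} convergence $\sup_{s\in(t,S]}|\cdot|_{L^1}\to 0$; your argument does not address this uniformity.

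The paper avoids both issues by running a second Duhamel estimate directly in $B^0_{1,1}\hookrightarrow L^1$ (see \eqref{PREAL_STAB_CAS_FACILE_L1}): after \eqref{YOUNG} and \eqref{EMBEDDING} the product splits as $|\mathcal B^{\varepsilon}_{\cdot}|_{L^\infty}\,|\brho^{\varepsilon_k}_{t,\mu}|_{L^1}$ with $|\brho^{\varepsilon_k}_{t,\mu}|_{L^1}=1$, so one factor is controlled for free and the remaining structure is linear in the $B^0_{1,1}$ norm of the difference plus a term already controlled by the weighted Besov Cauchy estimate. This yields a $\sup_{s\in(t,S]}$ inequality that can be closed exactly as in the Besov step, giving the uniform $L^1$ convergence directly. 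You should replace the Scheff\'e paragraph by this computation.
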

\begin{proof}
\textcolor{black}{Fix $k,j\in \mathbb N $}. 
Assume w.l.o.g. that $k\ge j $. We have from the Duhamel representation \eqref{main_MOLL}
\begin{eqnarray*}
 \brho^{\varepsilon_k}_{t,\mu}(s,y) -\brho^{\varepsilon_j}_{t,\mu}(s,y) &=&-\int_t^s dv \Big[\{{\mathcal B}_{\brho^{\varepsilon_k}_{t,\mu}}^{\varepsilon_k}(v,\cdot) \brho^{\varepsilon_k}_{t,\mu}(v,\cdot)-{\mathcal B}_{\brho^{\varepsilon_j}_{t,\mu}}^{\varepsilon_j}(v,\cdot) \brho^{\varepsilon_j}_{t,\mu}(v,\cdot)\} \star \nabla p_{s-v}^{\alpha}\Big] (y).
\end{eqnarray*}
\textcolor{black}{Fix $\vartheta\in (0,1) $ that can be chosen arbitrarily close to 1}. Applying successively \eqref{YOUNG} (with $m_1=1,m_2=\infty$), \eqref{PROD1}
 , \textcolor{black}{\eqref{YOUNG} again} and finally \eqref{BesovEmbedding} yields
\begin{eqnarray}
 &&	\Big|\Big(\mathcal B_{\brho_{t,\mu}^{\varepsilon_k}}^{\varepsilon_k}(v,\cdot)   \brho_{t,\mu}^{\varepsilon_k}(v,\cdot)-\mathcal B_{\brho_{t,\mu}^{\varepsilon_j}}^{\varepsilon_j}(v,\cdot)   \brho_{t,\mu}^{\varepsilon_j}(v,\cdot)\Big) \star \nabla p^{\alpha}_{s-v}\Big|_{B^{-\beta+\textcolor{black}{\vartheta}\Gamma}_{p',1}} \notag\\
 &\le& C \Big|\Big(\mathcal B_{\brho_{t,\mu}^{\varepsilon_k}}^{\varepsilon_k}(v,\cdot)  \brho_{t,\mu}^{\varepsilon_k}(v,\cdot)-\mathcal B_{\brho_{t,\mu}^{\varepsilon_j}}^{\varepsilon_j}(v,\cdot)  \brho_{t,\mu}^{\varepsilon_j}(v,\cdot)\Big) \Big|_{B^{\textcolor{black}{\vartheta}\Gamma}_{p',\infty}} \Big|\nabla p^{\alpha}_{s-v}\Big|_{B^{-\beta}_{1,1}}\notag\\
 &\le & C\left( \Big|\mathcal B_{\brho_{t,\mu}^{\varepsilon_k}}^{\varepsilon_k}(v,\cdot)-\mathcal B_{\brho_{t,\mu}^{\varepsilon_j}}^{\varepsilon_j}(v,\cdot)\Big|_{B^{ \textcolor{black}{\vartheta}\Gamma}_{\infty,\infty}} \Big|\brho_{t,\mu}^{\varepsilon_k}(v,\cdot)\Big|_{B^{ \textcolor{black}{\vartheta}\Gamma}_{p',1}} +\Big|\mathcal B_{\brho_{t,\mu}^{\varepsilon_j}}^{\varepsilon_j}(v,\cdot)\Big|_{B^{\textcolor{black}{\vartheta} \Gamma}_{\infty,\infty}} \Big|\brho_{t,\mu}^{\varepsilon_k}(v,\cdot)-\brho_{t,\mu}^{\varepsilon_j}(v,\cdot)\Big|_{B^{ \textcolor{black}{\vartheta}\Gamma}_{p',1}}\right)\notag\\
 &&\times \Big|\nabla p^{\alpha}_{s-v}\Big|_{B^{-\beta}_{1,1}}\notag\\
 &\le & C\left( \Big(|(b^{\varepsilon_k}-b^{\varepsilon_j})(v,\cdot)|_{B_{p,q}^{\beta+\textcolor{black}{\Gamma(\vartheta-1)}}}\Big|\brho_{t,\mu}^{\varepsilon_k}(v,\cdot)\Big|_{B^{ -\beta+\Gamma}_{p',1}} +|b^{\varepsilon_j}(v,\cdot)|_{B_{p,q}^\beta}\Big|(\brho_{t,\mu}^{\varepsilon_k}-\brho_{t,\mu}^{\varepsilon_j})(v,\cdot)\Big|_{B^{ -\beta+\textcolor{black}{\vartheta}\Gamma}_{p',1}}\Big)\Big|\brho_{t,\mu}^{\varepsilon_k}(v,\cdot)\Big|_{B^{ \textcolor{black}{\vartheta}\Gamma}_{p',1}}\right.\notag\\
 &&\left.+|b^{\varepsilon_j}(v,\cdot)|_{B^{\beta}_{p,q}}
 \Big|\brho_{t,\mu}^{\varepsilon_j}(v,\cdot)\Big|_{B^{ -\beta+\textcolor{black}{\vartheta}\Gamma}_{p',q'}} \Big|(\brho_{t,\mu}^{\varepsilon_k}-\brho_{t,\mu}^{\varepsilon_j})(v,\cdot)\Big|_{B^{ \textcolor{black}{\vartheta}\Gamma}_{p',1}} \right)\Big|\nabla p^{\alpha}_{s-v}\Big|_{B^{-\beta}_{1,1}}\notag\\
 &\le & C \Big(|(b^{\varepsilon_k}-b^{\varepsilon_j})(v,\cdot)|_{B_{p,q}^{\beta+\textcolor{black}{\Gamma(\vartheta-1)}}}\Big|\brho_{t,\mu}^{\varepsilon_k}(v,\cdot)\Big|_{B^{ -\beta+\Gamma}_{p',1}}^2 +|b^{\varepsilon_j}(v,\cdot)|_{B_{p,q}^\beta}\Big|(\brho_{t,\mu}^{\varepsilon_k}-\brho_{t,\mu}^{\varepsilon_j})(v,\cdot)\Big|_{B^{ -\beta+\textcolor{black}{\vartheta}\Gamma}_{p',1}}\Big|\brho_{t,\mu}^{\varepsilon_k}(v,\cdot)\Big|_{B^{-\beta+ \Gamma}_{p',1}}\Big)\notag\\
 &&
\times \Big|\nabla p^{\alpha}_{s-v}\Big|_{B^{-\beta}_{1,1}}
 .\label{PREAL_STAB_CAS_FACILE}
\end{eqnarray}
Exploiting now Lemma \ref{lem_quad_gronv_C1} and \eqref{SING_STABLE_HK} we get:
\begin{eqnarray*}
 &&	\Big|\Big(\mathcal B_{\brho_{t,\mu}^{\varepsilon_k}}^{\varepsilon_k}(v,\cdot)   \brho_{t,\mu}^{\varepsilon_k}(v,\cdot)-\mathcal B_{\brho_{t,\mu}^{\varepsilon_j}}^{\varepsilon_j}(v,\cdot)   \brho_{t,\mu}^{\varepsilon_j}(v,\cdot)\Big) \star \nabla p^{\alpha}_{s-v}\Big|_{B^{-\beta+\textcolor{black}{\vartheta}\Gamma}_{p',1}} \\
 &\le & \ch 
 \Big(|(b^{\varepsilon_k}-b^{\varepsilon_j})(v,\cdot)|_{B_{p,q}^{\beta+\textcolor{black}{\Gamma(\vartheta-1)}}}\big((v-t)^{-\theta}C_{\ref{lem_quad_gronv_C1}}\big)^2\\
 &&+ |b^{\varepsilon_j}(v,\cdot)|_{B^{\beta}_{p,q}}\textcolor{black}{(v-t)^{{-\theta}}}C_{\ref{lem_quad_gronv_C1}} \Big|(\brho_{t,\mu}^{\varepsilon_k}-\brho_{t,\mu}^{\varepsilon_j})(v,\cdot)\Big|_{B^{ -\beta+\textcolor{black}{\vartheta}\Gamma}_{p',1}}\Big)(s-v)^{-\frac{ 1-\beta}\alpha}.
\end{eqnarray*}
Hence, we \textcolor{black}{derive} from the H\"older inequality that:
\begin{eqnarray*}
&&|(\brho_{t,\mu}^{\varepsilon_k}-\brho_{t,\mu}^{\varepsilon_j})(s,\cdot)|_{B_{p',1}^{-\beta+\textcolor{black}{\vartheta\Gamma }}} \\
&\le & C |b^{\varepsilon_k}-b^{\varepsilon_j}|_{L^{\textcolor{black}{{\bar r}}}(B_{p,q}^{\beta+\textcolor{black}{\Gamma (\vartheta-1)}})}
\left(\int_t^s \frac{dv}{(v-t)^{2\textcolor{black}{\bar r'}\theta}(s-v)^{\textcolor{black}{\bar r'}\frac{ 1-\beta}\alpha}}\right)^{\frac 1{\textcolor{black}{\bar r'}}}\\
&&+ C |b^{\varepsilon_j}|_{L^{\textcolor{black}{\bar r}}(B_{p,q}^\beta)} \left(\int_t^s \frac{dv}{(v-t)^{\textcolor{black}{2}\textcolor{black}{\bar r'}\theta}(s-v)^{\textcolor{black}{\bar r'}\frac{ 1-\beta}\alpha}}\left(\sup_{r\in (t,v]}(\textcolor{black}{r}-t)^{\theta}|(\brho_{t,\mu}^{\varepsilon_k}-\brho_{t,\mu}^{\varepsilon_j})(\textcolor{black}{r},\cdot)|_{B_{p',1}^{-\beta+\textcolor{black}{\vartheta}\Gamma}}\right)^{\bar r'}\right)^{\frac 1{\textcolor{black}{\bar r'}}},
\end{eqnarray*}
\textcolor{black}{with $\bar r=r $ if $r<+\infty $ and any $\bar r<+\infty $ otherwise}. We recall that, similarly to the proof of Lemma \ref{lem_unifesti_gencase2_RELOADED}, under \eqref{cond_coeff_SPKR}, we have $2\theta \textcolor{black}{\bar r'}<1$, and $\textcolor{black}{\bar r'}(1-\beta)/\alpha<1 $. 

\textcolor{black}{Taking $\vartheta\in (0,1) $ is  sufficient to have a negative exponent $\Gamma(\vartheta-1) $ in order to invoke \eqref{SMOOTH_APP_GEN} in Proposition 
\ref{PROP_APPROX} for the convergence of the first term in the above r.h.s. for any integrability parameters $p,q,r $. Note that for $p,q,r<+\infty $ we can also take $\vartheta=1 $, since in that case the Schwartz class is dense in the corresponding space (see \cite[Theorem 2.4 and p. 95]{Sawano-18})}.

Recalling that
$1/\textcolor{black}{\bar  r'} -(1-\beta)/\alpha-\theta>0
$,
we deduce that for $T$ small enough,
\begin{eqnarray*}
\sup_{s\in (t,S]}(s-t)^{\theta}|(\brho_{t,\mu}^{\varepsilon_k}-\brho_{t,\mu}^{\varepsilon_j})(s,\cdot)|_{B_{p',1}^{-\beta+\textcolor{black}{\vartheta}\Gamma}} 
\le C|b^{\varepsilon_k}-b^{\varepsilon_j}|_{L^{\textcolor{black}{\bar r}}(B_{p,q}^{\beta+\textcolor{black}{\Gamma(\vartheta-1)}})}\textcolor{black}{(S-t)^{{\frac 1{\textcolor{black}{\bar r'}} -\frac{1-\beta}{\alpha}-\theta}}}.
\end{eqnarray*}
\textcolor{black}{Since from Proposition \ref{PROP_APPROX}, $|b^{\varepsilon_k}-b^{\varepsilon_j}|_{L^{\bar r}(B_{p,q}^{\beta+\Gamma(\vartheta-1)})} \underset{j,k}{\rightarrow} 0$, we thus derive that $\brho_{t,\mu}^{\varepsilon_k} $ is  a Cauchy sequence in $L_{{\rm w}_{\theta}}^\infty ((t,S],B_{p',1}^{-\beta+\Gamma})$}. 

We have thus  established that $\big(\brho_{t,\mu}^{\varepsilon_k}\big)_k $ is a Cauchy sequence in $L_{{\rm w}_{\theta}}^\infty ((t,S],B_{p',1}^{-\beta+\vartheta\Gamma})$ under \eqref{cond_coeff_SPKR}.
Let us turn to the $L^1$ norm. 
From the embedding \eqref{EMBEDDING} we will actually focus on the $B_{1,1}^0$ norm of the difference and get similarly to \eqref{PREAL_STAB_CAS_FACILE} \textcolor{black}{(using as well that, for $v\in (t,T],\ \brho_{t,\mu}^{\varepsilon_k}(v,\cdot)$ is a probability density)}:
\begin{eqnarray}
&&  \Big|\Big(\mathcal B_{\brho_{t,\mu}^{\varepsilon_k}}^{\varepsilon_k}(v,\cdot)   \brho_{t,\mu}^{\varepsilon_k}(v,\cdot)-\mathcal B_{\brho_{t,\mu}^{\varepsilon_j}}^{\varepsilon_j}(v,\cdot)   \brho_{t,\mu}^{\varepsilon_j}(v,\cdot)\Big) \star \nabla p^{\alpha}_{s-v}\Big|_{B^{0}_{1,1}} \notag\\
 &\underset{\eqref{YOUNG}}{\le}& C \Big|\Big(\mathcal B_{\brho_{t,\mu}^{\varepsilon_k}}^{\varepsilon_k}(v,\cdot)  \brho_{t,\mu}^{\varepsilon_k}(v,\cdot)-\mathcal B_{\brho_{t,\mu}^{\varepsilon_j}}^{\varepsilon_j}(v,\cdot)  \brho_{t,\mu}^{\varepsilon_j}(v,\cdot)\Big) \Big|_{B^{0}_{1,\infty}} \Big|\nabla p^{\alpha}_{s-v}\Big|_{B^{0}_{1,1}}\notag\\
 &\underset{\eqref{EMBEDDING},\textcolor{black}{\eqref{PROD1}}}{\le} & C\left( \Big|\mathcal B_{\brho_{t,\mu}^{\varepsilon_k}}^{\varepsilon_k}(v,\cdot)-\mathcal B_{\brho_{t,\mu}^{\varepsilon_j}}^{\varepsilon_j}(v,\cdot)\Big|_{L^\infty} \underbrace{\Big|\brho_{t,\mu}^{\varepsilon_k}(v,\cdot)\Big|_{L^1}}_{=1} +\Big|\mathcal B_{\brho_{t,\mu}^{\varepsilon_j}}^{\varepsilon_j}(v,\cdot)\Big|_{L^{\infty}} \Big|\brho_{t,\mu}^{\varepsilon_k}(v,\cdot)-\brho_{t,\mu}^{\varepsilon_j}(v,\cdot)\Big|_{B^{0}_{1,\textcolor{black}{1}}}\right)\notag\\
 &&\times \Big|\nabla p^{\alpha}_{s-v}\Big|_{B^{0}_{1,1}}\notag\\
 &\underset{ \textcolor{black}{\eqref{EMBEDDING}, \eqref{YOUNG}}
 , \eqref{BesovEmbedding}}{\le} & C\left( \Big(|(b^{\varepsilon_k}-b^{\varepsilon_j})(v,\cdot)|_{B_{p,q}^{\beta-\textcolor{black}{\vartheta \Gamma}}}\Big|\brho_{t,\mu}^{\varepsilon_k}(v,\cdot)\Big|_{B^{-\beta+ \Gamma}_{p',1}} +|b^{\varepsilon_j}(v,\cdot)|_{B_{p,q}^\beta}\Big|(\brho_{t,\mu}^{\varepsilon_k}-\brho_{t,\mu}^{\varepsilon_j})(v,\cdot)\Big|_{B^{-\beta+\textcolor{black}{\vartheta}\Gamma}_{p',1}}\Big)\right.\notag\\
 &&\left.+|b^{\varepsilon_j}(v,\cdot)|_{B^{\beta}_{p,q}}
 \Big|\brho_{t,\mu}^{\varepsilon_j}(v,\cdot)\Big|_{B^{-\beta+ \Gamma}_{p',1}} \Big|(\brho_{t,\mu}^{\varepsilon_k}-\brho_{t,\mu}^{\varepsilon_j})(v,\cdot)\Big|_{B^{0}_{1,1}} \right)\Big|\nabla p^{\alpha}_{s-v}\Big|_{B^{0}_{1,1}}\notag\\
 &\underset{\eqref{SING_STABLE_HK}, \eqref{Unifestim_quadrgronwall_C1}}{\le} & C\Big( |(b^{\varepsilon_k}-b^{\varepsilon_j})(v,\cdot)|_{B_{p,q}^{\beta-\textcolor{black}{\vartheta\Gamma}}}+|b^{\varepsilon_j}(v,\cdot)|_{B^{\beta}_{p,q}}\Big[
  \textcolor{black}{(v-t)^{\theta}}\Big|(\brho_{t,\mu}^{\varepsilon_k}-\brho_{t,\mu}^{\varepsilon_j})(v,\cdot)\Big|_{B^{-\beta+\textcolor{black}{\vartheta}\Gamma}_{p',1}}\notag\\
  &&\textcolor{black}{+ \Big|(\brho_{t,\mu}^{\varepsilon_k}-\brho_{t,\mu}^{\varepsilon_j})(v,\cdot)\Big|_{B^{0}_{1,1}}\Big]\Big)}\times \textcolor{black}{(v-t)^{-\theta}(s-v)^{-\frac 1 \alpha}}.\label{PREAL_STAB_CAS_FACILE_L1}
\end{eqnarray}
The fact that we made the $L^1$ norm of the mollified densities appear spares us a normalization as in the previous computations. \textcolor{black}{Namely, the time normalized estimate in Besov norm of Lemma \ref{lem_unifesti_gencase2_RELOADED} would have induced additional time singularities}.
\textcolor{black}{We can now write} from \eqref{PREAL_STAB_CAS_FACILE_L1} and the H\"older inequality \textcolor{black}{(for $\bar r$ as above)}:
\begin{align*}
&|(\brho_{t,\mu}^{\varepsilon_k}-\brho_{t,\mu}^{\varepsilon_j})(s,\cdot)|_{B_{1,1}^0} \\
\le & C \left\{\Big(|b^{\varepsilon_k}-b^{\varepsilon_j}|_{L^{\textcolor{black}{\bar r}}(B_{p,q}^{\beta-\textcolor{black}{\vartheta\Gamma}})}
    + |b^{\varepsilon_j}|_{L^{\textcolor{black}{\bar r}}(B_{p,q}^\beta)}[\textcolor{black}{\|\brho_{t,\mu}^{\varepsilon_k}-\brho_{t,\mu}^{\varepsilon_j}\|_{L_{{\rm w}_{\theta}}^\infty(\textcolor{black}{(t,S]},B_{p',1}^{-\beta+\textcolor{black}{\vartheta}\Gamma})}}\Big)
    \left(\int_t^s \frac{dv}{(v-t)^{\bar r' \theta} 
 (s-v)^{\frac {\textcolor{black}{\bar r'}}\alpha}}\right)^{\frac 1{\bar r'}}\right.\\
 &\left.+ |b^{\varepsilon_j}|_{L^{\textcolor{black}{\bar r}}(B_{p,q}^\beta)}\left(\int_t^s \frac{dv}{(v-t)^{\bar r' \theta} 
 (s-v)^{\frac {\textcolor{black}{\bar r'}}\alpha}} \textcolor{black}{\Big(\sup_{r\in (t,v]}|(\brho_{t,\mu}^{\varepsilon_k}-\brho_{t,\mu}^{\varepsilon_j})(r,\cdot)|_{B_{1,1}^0}\Big)^{\bar r'}}\right)^{\frac 1{\textcolor{black}{\bar r'}}}\right\}.
\end{align*}

We can now again invoke the Gronwall-Volterra Lemma and Lemma \ref{lem_gestion_sing} to derive:
\begin{eqnarray*}
\sup_{s\in (t,S]}|(\brho_{t,\mu}^{\varepsilon_k}-\brho_{t,\mu}^{\varepsilon_j})(s,\cdot)|_{B_{1,1}^0} \le C\Big(|b^{\varepsilon_k}-b^{\varepsilon_j}|_{L^r(B_{p,q}^\beta)}+\|\brho_{t,\mu}^{\varepsilon_k}-\brho_{t,\mu}^{\varepsilon_j}\|_{L_{{\rm w}_{\theta}}^\infty(B_{p',1}^{-\beta+\Gamma})}\Big)(S-t)^{\frac{1}{\textcolor{black}{\bar r'}}-(\theta+\frac 1\alpha)},
\end{eqnarray*}
which indeed gives, from Proposition \ref{PROP_APPROX} and the previous part of the proof, that $ (\brho_{t,\mu}^{\varepsilon_k})_k$ is a Cauchy sequence in $L^\infty((t,S],L^1)$. We then derive \eqref{STRONG_CONV_2} by completeness. 

Under \eqref{THE_COND_CI}, i.e. for $\beta=-1 $, let us emphasize that using the structure condition ${\div }(b)\in L^r(B_{p,q}^{-1}) $, one could reproduce the previous arguments writing, 
instead of \eqref{PREAL_STAB_CAS_FACILE}
\begin{eqnarray*}
 &&	\Big|\Big(\mathcal B_{\brho_{t,\mu}^{\varepsilon_k}}^{\varepsilon_k}(v,\cdot)   \brho_{t,\mu}^{\varepsilon_k}(v,\cdot)-\mathcal B_{\brho_{t,\mu}^{\varepsilon_j}}^{\varepsilon_j}(v,\cdot)   \brho_{t,\mu}^{\varepsilon_j}(v,\cdot)\Big) \star \nabla p^{\alpha}_{s-v}\Big|_{B^{-\beta+\textcolor{black}{\vartheta}\Gamma}_{p',1}} \notag\\
 &\le & C \Big(\big[|(b^{\varepsilon_k}-b^{\varepsilon_j})(v,\cdot)|_{B_{p,q}^{\beta+\textcolor{black}{\Gamma(\vartheta-1)}}}+|(\div(b^{\varepsilon_k})-\div(b^{\textcolor{black}{\varepsilon_j}})(v,\cdot)|_{B_{p,q}^{\beta+\textcolor{black}{\Gamma(\vartheta-1)}}}\big]\Big|\brho_{t,\mu}^{\varepsilon_k}(v,\cdot)\Big|_{B^{ -\beta+\Gamma}_{p',1}}^2 \\
 &&+\big[|b^{\varepsilon_j}(v,\cdot)|_{B_{p,q}^\beta}+|\div(b^{\varepsilon_j})(v,\cdot)|_{B_{p,q}^\beta}\big]\Big|(\brho_{t,\mu}^{\varepsilon_k}-\brho_{t,\mu}^{\varepsilon_j})(v,\cdot)\Big|_{B^{ -\beta+\textcolor{black}{\vartheta}\Gamma}_{p',1}}\Big|\brho_{t,\mu}^{\varepsilon_k}(v,\cdot)\Big|_{B^{-\beta+ \Gamma}_{p',1}}\Big)\notag\\
 &&
\times 
\Big| p^{\alpha}_{s-v}\Big|_{B^{-\beta}_{1,1}}
 .
\end{eqnarray*}
Once the integration by parts is performed the analysis involving the singularities corresponds to the previous one for $ \beta=0$.
\end{proof}
}
Let us now prove that the limit point in the previous lemma is a distributional solution to the Fokker-Planck equation \eqref{NL_PDE_FK} and also satisfies the Duhamel representation \eqref{main_DUHAMEL}.
\begin{lemme}\label{lem_ex_duha_SPKRcase_BIS}
Assume \eqref{cond_coeff_SPKR} or \eqref{THE_COND_CI}  is in force. {
Let ${\color{black}S  \le \mathcal T_1}$ with $\mathcal T_1$ as in Lemma \ref{lem_quad_gronv_C1}}. Let $(\varepsilon_k)_{k \ge 1}$ be a decreasing sequence and $\brho_{t,\mu}$ 
be the limit point of $(\brho_{t,\mu}^{\varepsilon_k})_k$ exhibited in Lemma \ref{SECOND_STAB_BIS}, i.e. \eqref{STRONG_CONV_2} holds. Then $\brho_{t,\mu}$ satisfies the Fokker-Planck equation \eqref{NL_PDE_FK} \textcolor{black}{(in the sense of distributions, see \eqref{DEF_DISTR})} and also enjoys the Duhamel type representation \eqref{main_DUHAMEL}.
\end{lemme}
\begin{proof}
The claim can be obtained by replicating the arguments of Lemma 9 in \cite{chau:jabi:meno:22-1}. For the sake of completeness, we just draw the essential points of the demonstration, leaving further details to a line-by-line reading of \cite{chau:jabi:meno:22-1}.  Let $\brho_{t,\mu}$  be the cluster point given by Lemma \ref{SECOND_STAB_BIS}. 


 Starting from \textcolor{black}{the} weak formulation associated with $\brho_{\textcolor{black}{t,\mu}}^{\varepsilon_k}$, $\brho_{t,\mu}$ satisfies, for any $\varphi \in \mathcal D([t,S)\times \R^d)$ and $k\in \mathbb N $,
\begin{eqnarray*}
\int_t^S \int \brho_{t,\mu}(s,x)\big(\partial_s\varphi+\mathcal B_{\brho_{t,\mu}}\cdot \nabla \varphi+\textcolor{black}{L^\alpha}(\varphi)\big)(s,x)\,ds\,dx=-	\int \varphi(\textcolor{black}{t},x)\mu(dx)+\Delta^1_{\brho_{t,\mu},\brho^{\varepsilon_k}_{t,\mu}} \textcolor{black}{(\varphi)}+\Delta^2_{\brho_{t,\mu},\brho^{\varepsilon_k}_{t,\mu}}\textcolor{black}{(\varphi)}
\end{eqnarray*}
for
\begin{eqnarray*}
\Delta^1_{\brho_{t,\mu},\brho^{\varepsilon_k}_{t,\mu}}\textcolor{black}{(\varphi)}&=&\int_t^S \int \big(\brho_{t,\mu}(s,x)-\brho^{\varepsilon_k}_{t,\mu}(s,x)\big)\big(\partial_s\varphi+\textcolor{black}{L^\alpha}(\varphi)\big)(s,x)\,ds\,dx
\end{eqnarray*}
and 
\begin{eqnarray*}
\Delta^2_{\brho_{t,\mu},\brho^{\varepsilon_k}_{t,\mu}}\textcolor{black}{(\varphi)}&=&\int_t^S \int \big(\mathcal B_{\brho_{t,\mu}} \brho_{t,\mu}(s,x)-\mathcal B^{\varepsilon_k}_{\brho^{\varepsilon_k}_{t,\mu}}\brho^{\varepsilon_k}_{t,\mu}(s,x)\big)\cdot \nabla \varphi(s,x)\,ds\,dx\textcolor{black}{.}
\end{eqnarray*}
As it is clear that $(\partial_s+\textcolor{black}{L^\alpha})\varphi\in L^{1}([t,S),L^\infty\textcolor{black}{(\R^d)})$, we readily get from \eqref{STRONG_CONV_2} that $|\Delta_{\brho_{t,\mu},\brho_{t,\mu}^{\varepsilon_k}}^1(\varphi)|\underset{k}{\longrightarrow} 0.$

For the second term $\Delta_{\brho,\brho^{\varepsilon_k}}^2(\varphi)$, we simply have to reproduce the computations of the previous Lemma observing that the heat kernel, which induced time singularity is now replaced by a time-space smooth function. Write indeed, 
\begin{eqnarray*}
&&| \Delta_{\brho_{t,\mu},\brho_{t,\mu}^{\varepsilon_k}}^{2}(\varphi)|\\
 &\le& 
\Big|\int_t^S \int \big(\brho_{t,\mu}-\brho^{\varepsilon_k}_{t,\mu}\big)(s,x)\big(\mathcal B_{\brho_{t,\mu}}\cdot \nabla \varphi\big)(s,x)\,ds\,dx\Big| \\
&&+\Big|\int_t^S \int  \brho^{\varepsilon_k}_{t,\mu}(s,x) \Big(\big(\mathcal B^{\varepsilon_k}_{\brho^{\varepsilon_k}_{t,\mu}}-\mathcal B_{\brho_{t,\mu}}\big)\cdot \nabla \varphi\Big)(s,x)\,ds\,dx\Big|\\
&\le & |\brho_{t,\mu}-\brho^{\varepsilon_k}_{t,\mu}|_{L^\infty((t,S],L^1)}\int_t^S\,ds|\mathcal B_{\brho_{t,\mu}}(s,\cdot)\cdot \nabla \varphi(s,\cdot)|_{L^\infty(\mathbb R^d)}\\
&&+|\brho^{\varepsilon_k}_{t,\mu}|_{L^\infty((t,S],L^1)}\int_t^S\,ds |\big(\big(\mathcal B^{\varepsilon_k}_{\brho^{\varepsilon_k}_{t,\mu}}-\mathcal B_{\brho_{t,\mu}}\big)\cdot \nabla \varphi\big)(s,\cdot)|_{L^\infty(\mathbb R^d)}\\
&\le &|\nabla \varphi|_{L^\infty}\Bigg(|\brho_{t,\mu}-\brho^\varepsilon_{t,\mu}|_{L^\infty((t,S],L^1)}\int_{t}^S ds |b(s,\cdot)|_{B_{p,q}^\beta} |\brho_{t,\mu}(s,\cdot)|_{B_{p',1}^{-\beta}}\\
&&+\int_t^S ds \Big(|(b-b^{\varepsilon_k})(s,\cdot)|_{B_{p,q}^{\beta- \Gamma}}|\brho_{t,\mu}^{\varepsilon_k}(s,\cdot)|_{B_{p',1}^{-\beta+\Gamma}} +|b(s,\cdot)|_{B_{p,q}^\beta}|(\brho_{t,\mu}^{\varepsilon_k}-\brho_{t,\mu})(s,\cdot)|_{B_{p',1}^{-\beta+\vartheta \Gamma}}\Big) \Bigg)\\
&\le &C|\nabla \varphi|_{L^\infty}\Bigg(|\brho_{t,\mu}-\brho^\varepsilon_{t,\mu}|_{L^\infty((t,S],L^1)} |b|_{L^r(B_{p,q}^\beta)}\Big(\int_t^S ds(s-t)^{-\theta r'}\Big)^{\frac 1{r'}}\\
&&+|(b-b^{\varepsilon_k})|_{L^{\bar r}(B_{p,q}^{\beta- \Gamma})}\Big(\int_t^S ds (s-t)^{-\theta \bar r'}\Big)^{\frac 1{\bar r'}} \\
&&+|b|_{L^r(B_{p,q}^\beta)}\Big(\sup_{s\in (t,S]}\w_{\theta}^{\theta_2}(s-t)|(\brho_{t,\mu}^{\varepsilon_k}-\brho_{t,\mu})(s,\cdot)|_{B_{p',1}^{-\beta+\vartheta\Gamma}}\Big)\Big(\int_t^S ds(s-t)^{-\theta \bar r'}\Big)^{\frac 1{r'}} \Bigg),
\end{eqnarray*}
using Lemma \ref{lem_quad_gronv_C1} for the last inequality. We eventually derive from Lemma \ref{SECOND_STAB_BIS} and Proposition \ref{PROP_APPROX}
that $\Delta_{\brho,\brho^{\varepsilon_k}}^{2}(\varphi)\rightarrow 0$ and we conclude  that $\brho_{t,\mu}$ satisfies \eqref{NL_PDE_FK} in a distributional sense.

\textcolor{black}{To establish \eqref{main_DUHAMEL} we can start from the Duhamel representation which holds for the density associated with the mollified coefficients (which is proved e.g.  in Lemma 3 of \cite{chau:jabi:meno:22-1}). Namely, the above equation \eqref{main_MOLL} which we now recall for clarity.
\begin{align*}
\brho^{\varepsilon}_{t,\mu}(s,x)= p^\alpha_{s-t}\star \,\mu(x)-\int_t^s dv\, \Big(\nabla p^\alpha_{s-v}\star \{ \mathcal B_{\brho^{\varepsilon}_{t,\mu}}^{\varepsilon}(v,x) \brho^{\varepsilon}_{t,\mu}(v,x)\}\Big).
\end{align*}
The arguments used in Lemma \ref{SECOND_STAB_BIS} to establish the $L^1$ convergence then give \eqref{main_DUHAMEL} for the limit.}


The final control stated is also a direct consequence of Lemma \ref{SECOND_STAB_BIS}.

\end{proof}

\begin{lemme}\label{lem_uniq_SPKRcase_BIS} Under the assumptions and with the notations of Lemma \ref{SECOND_STAB_BIS}, the equation \eqref{main_DUHAMEL} admits at most one solution  in $L_{{\rm w}_{\theta}}^\infty((t,S],B_{p',1}^{-\beta+\vartheta\Gamma})) $
 for any $S\le\Tspkr$.
\end{lemme}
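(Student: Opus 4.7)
The plan is to exploit the bilinear structure of the non-linear term to derive a closed inequality for the difference of two hypothetical solutions, and then to absorb this inequality by a contraction argument on short time intervals. Let $\brho^1, \brho^2$ be two solutions of \eqref{main_DUHAMEL} in $L^\infty_{{\rm w}_\theta}((t,S],B^{-\beta+\vartheta\Gamma}_{p',1})$ sharing the same initial condition $\mu$. Subtracting the two Duhamel formulas and using the bilinear splitting
\[
\mathcal B_{\brho^1}\brho^1-\mathcal B_{\brho^2}\brho^2 = \mathcal B_{\brho^1-\brho^2}\brho^1+\mathcal B_{\brho^2}(\brho^1-\brho^2)
\]
produces an integral equation for the difference with vanishing initial datum. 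Under \eqref{THE_COND_CI}, the gradient is first transferred onto the product by an integration by parts, exactly as in Remark \ref{LA_RQ_POUR_ANALOGIE}.

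The next step is to bound the $B^{-\beta+\vartheta\Gamma}_{p',1}$-norm of each of the two pieces by the same cascade of estimates \eqref{YOUNG}--\eqref{PROD1}--\eqref{EQ_DUALITY}--\eqref{BesovEmbedding}--\eqref{SING_STABLE_HK} used in deriving \eqref{PREAL_TO_UNIQUENESS_FK}, with the two factors now playing symmetric roles. Setting
\[
M:=\|\brho^1\|_{L^\infty_{{\rm w}_\theta}}+\|\brho^2\|_{L^\infty_{{\rm w}_\theta}},\qquad N(\sigma):=\sup_{v\in(t,\sigma]}(v-t)^\theta|(\brho^1-\brho^2)(v,\cdot)|_{B^{-\beta+\vartheta\Gamma}_{p',1}},
\]
substituting $|\brho^i(v,\cdot)|_{B^{-\beta+\vartheta\Gamma}_{p',1}}\le M(v-t)^{-\theta}$ and $|(\brho^1-\brho^2)(v,\cdot)|_{B^{-\beta+\vartheta\Gamma}_{p',1}}\le N(s)(v-t)^{-\theta}$ for $v\le s$, multiplying by $(s-t)^\theta$ and applying the $L^r$--$L^{r'}$ Hölder inequality in time -- the resulting beta-function integral being finite thanks to $2\theta r'<1$ and $(1-\beta\I_{\beta\in(-1,0]})r'/\alpha<1$, both already verified in Lemma \ref{lem_unifesti_gencase2_RELOADED} -- yields
\[
N(s)\le \tilde C\, M\, |b|_{L^r(B^\beta_{p,q})}\,(s-t)^{\eta/\alpha}\,N(s),\qquad s\in(t,S],
\]
the positive exponent $\eta/\alpha$ being the very one that appears in Lemma \ref{lem_unifesti_gencase2_RELOADED}.

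Choosing $\delta>0$ so small that $\tilde C M|b|_{L^r(B^\beta_{p,q})}\delta^{\eta/\alpha}\le 1/2$ forces $N(t+\delta)=0$, i.e.\ $\brho^1\equiv\brho^2$ on $(t,t+\delta]$. To propagate uniqueness up to $S$, I would restart from any $\tau\in(t,S)$ at which the two solutions coincide: the semigroup property of $p^\alpha$ turns \eqref{main_DUHAMEL} into an analogous Duhamel equation on $[\tau,S]$ starting from the common value $\brho^1(\tau,\cdot)$, and the same bilinear estimate -- now without initial-time weight since both solutions are uniformly bounded on $[\tau,S]$ by $(\tau-t)^{-\theta}M$ -- yields a contraction on a sufficiently short subinterval. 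A finite number of such steps covers $(t,S]$ as $S<\mathcal T_1$ is fixed. The main technical point I anticipate is the validity of the product rule \eqref{PROD1} at the reduced regularity $-\beta+\vartheta\Gamma$: this only requires $\vartheta\Gamma>0$, which holds since $\Gamma>0$ (Lemma \ref{lem_unifesti_gencase2_RELOADED}) and $\vartheta\in(0,1)$.
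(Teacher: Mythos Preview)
Your proposal is correct and follows essentially the same route as the paper: subtract the Duhamel representations, use the bilinear splitting of the non-linear term, apply the chain \eqref{YOUNG}--\eqref{PROD1}--\eqref{EQ_DUALITY}--\eqref{BesovEmbedding}--\eqref{SING_STABLE_HK}, and close via H\"older in time with the same beta-integral and the same positive exponent $\frac1{r'}-\frac{1-\beta\I_{\beta\in(-1,0]}}{\alpha}-\theta=\eta/\alpha$. Two minor differences worth noting: (i) the paper bounds the factors $|\brho^{(i)}(v,\cdot)|$ by invoking Lemma \ref{lem_quad_gronv_C1}, whereas you simply use the finite quantity $M$ coming from membership in the weighted space---your choice is slightly cleaner since Lemma \ref{lem_quad_gronv_C1} was stated for the mollified densities; (ii) the paper stops at ``$S-t$ can be taken small enough'' and leaves the propagation to arbitrary $S<\mathcal T_1$ implicit, while you spell out the restart-and-iterate argument using the semigroup property. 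Both points are refinements rather than departures.
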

\begin{proof}  
The proof is here  very close  to the stability analysis performed in Lemma \ref{SECOND_STAB_BIS} (see also Lemma 10 in \cite{chau:jabi:meno:22-1} for similar issues). As in the indicated lemma we present the proof under \eqref{cond_coeff_SPKR}. We refer to 
the end of the proof of Lemma \ref{SECOND_STAB_BIS} for the modifications under \eqref{THE_COND_CI}. 
 
Assume that $\brho^{(1)}_{t,\mu}$ and $\brho^{(2)}_{t,\mu}$ are two possible solutions to \eqref{main_DUHAMEL}. Then, for a.e. $t\le  s\le S$,  $y$ in $\R^d$,
\begin{eqnarray*}
\brho^{(1)}_{t,\mu}(s,y) -\brho^{(2)}_{t,\mu}(s,y)
 = -\int_{t}^s dv \Big[\{{\mathcal B}_{\brho^{(1)}_{t,\mu}}(v,\cdot) \brho^{(1)}_{t,\mu}(v,\cdot)-
 {\mathcal B}_{\brho^{(2)}_{t,\mu}}(v,\cdot) \brho^{(2)}_{t,\mu}(v,\cdot)\} \star \nabla p_{s-v}^{\alpha}\Big] (y).
\end{eqnarray*}
Similarly to \eqref{PREAL_STAB_CAS_FACILE}
write
\begin{eqnarray}
 &&	\Big|\Big(\mathcal B_{\brho_{t,\mu}^{(1)}}(v,\cdot)   \brho_{t,\mu}^{(1)}(v,\cdot)-\mathcal B_{\brho_{t,\mu}^{(2)}}(v,\cdot)   \brho_{t,\mu}^{(2)}(v,\cdot)\Big) \star \nabla p^{\alpha}_{s-v}\Big|_{B^{-\beta+\vartheta\Gamma}_{p',1}} \notag\\
  &\le & C\left( |b(v,\cdot)|_{B_{p,q}^\beta}\Big|(\brho_{t,\mu}^{(1)}-\brho_{t,\mu}^{(2)})(v,\cdot)\Big|_{B^{-\beta+ \vartheta\Gamma}_{p',1}}
  \Big|\brho_{t,\mu}^{(1)}(v,\cdot)\Big|_{B^{ \vartheta\Gamma}_{p',1}}\right.\notag\notag\\
 &&\left.+|b(v,\cdot)|_{B^{\beta}_{p,q}}
 \Big|\brho_{t,\mu}^{(2)}(v,\cdot)\Big|_{B^{ -\beta+\vartheta\Gamma}_{p',q'}} \Big|(\brho_{t,\mu}^{(1)}-\brho_{t,\mu}^{(2)})(v,\cdot)\Big|_{B^{-\beta+\vartheta \Gamma}_{p',1}} \right)\Big|\nabla p^{\alpha}_{s-v}\Big|_{B^{-\beta}_{1,1}},\label{PREAL_TO_UNIQUENESS_FK_BACK}
\end{eqnarray}
where $\vartheta=1 $ if $p,q,r<+\infty$, $\vartheta\in (0,1) $ otherwise. From the H\"older inequality and Lemma \ref{lem_quad_gronv_C1}, we thus derive:
\begin{align*}
|(\brho_{t,\mu}^{(1)}-\brho_{t,\mu}^{(2)})(s,\cdot)|_{B_{p',1}^{-\beta+\vartheta\Gamma}} &\le & C 
|b|_{L^r(B_{p,q}^\beta)}
\left(\int_t^s dv\frac{\Big(\sup_{r\in (t,v]}(r-t)^{\theta}|(\brho_{t,\mu}^{(1)}-\brho_{t,\mu}^{(2)})(r,\cdot)|_{B_{p',1}^{-\beta+\vartheta\Gamma}}\Big)^{r'}}{(v-t)^{2r'\theta}
(s-v)^{r'\frac {1-\beta}\alpha}
}
\right)^{\frac {1}{r'}}.
\end{align*}
The result then follows {\color{black}by} multiplying the above l.h.s. by $(s-t)^{\theta} $, taking the supremum on $(t,S] $ and exploiting the small time assumption (recalling that $\frac 1{\textcolor{black}{  r'}} -\frac{1-\beta}{\alpha}-\theta>0$).
%
%
\end{proof}

\section[Well posedness of the non-linear McKean Vlasov SDE]{Well posedness of the non-linear McKean Vlasov SDE. From the Fokker-Planck equation to the non-linear martingale problem}\label{sec_WP_SDE}
We will here first focus on the integrability properties of the non-linear drift.  
Namely, we have the following result:
\begin{lemme}\label{LEMME_FOR_TIGHTNESS_AND_IDENTIFICATION}
Assume that \eqref{cond_coeff_SPKR} or \eqref{THE_COND_CI} holds. Then, the mollified non-linear drift $\mathcal B^\varepsilon_{\brho_{t,\mu}^\varepsilon}$ in \eqref{main_smoothed} is in $L^{r_0}((t,T],B_{\infty,1}^0) $ with $r_0 \in (\frac{\alpha}{\alpha-\textcolor{black}{(1-\beta\I_{\beta\in (-1,0]})}},\frac{r}{1+r\theta})$. 
\textcolor{black}{Setting $\Xi:=\frac 1{r_0}-\big(\frac 1{r}+\theta\big)>0$}\footnote{\textcolor{black}{Importantly, the positivity of $\Xi $  follows from the definition of $\theta$ in \eqref{def_thetas} and the range in which we take $\textcolor{black}{r_0}$.}}, there exists $C\ge 1$ s.t. for all $\varepsilon>0 $:
\begin{eqnarray}\label{unifestim_drift_gencase_CR}
\forall t\le S\le T,\,|\mathcal B^\varepsilon_{\brho_{t,\mu}^\varepsilon}|_{L^{r_0}((t,S],B^0_{\infty,1})}\le C(S-t)^{\textcolor{black}{\Xi}
}|b|_{L^r(B^{\beta}_{p,q})}.
\end{eqnarray}
\end{lemme}
\begin{proof}
From the Young inequality \eqref{YOUNG}, one gets for all $s\in (t,T] $:
$$|\mathcal B^\varepsilon_{\brho_{t,\mu}^\varepsilon}(s,\cdot)|_{B^0_{\infty,1}}\le \textcolor{black}{c_{\mathbf{Y}}}|b^\varepsilon(s,\cdot)|_{B_{p,q}^\beta} 
|\brho_{t,\mu}^\varepsilon(s,\cdot)|_{B_{p',q'}^{-\beta}}.
$$
Take now $r_0$ as indicated, then $r>r_0 $ and use the H\"older inequality, \textcolor{black}{$L^{r_0}:L^r-L^{(r_0^{-1}-r^{-1})^{-1}} $ (with the usual convention if $r=\infty$)}, to derive:
\begin{align*}
|\mathcal B^\varepsilon_{\brho_{t,\mu}^\varepsilon}|_{L^{r_0}((t,S],B^0_{\infty,1})}\le& \textcolor{black}{c_{\mathbf{Y}}}|b^\varepsilon|_{L^r(B^{\beta}_{p,q})}\Big(\int_t^S ds |\brho_{t,\mu}^\varepsilon(s,\cdot)|_{B_{p',q'}^{-\beta}}^{r_0\frac{r}{r-r_0}}\Big)^{\frac 1{r_0}-\frac {1}r}\\
\le& \textcolor{black}{c_{\mathbf{Y}}}|b^\varepsilon|_{L^r(B^{\beta}_{p,q})} \|\brho_{t,\mu}^\varepsilon\|_{L_{{\rm w}_\theta}^\infty((t,S],B_{p',1}^{-\beta+\vartheta\Gamma})}\Big(\int_t^S ds (s-t)^{-\theta(r_0\frac{r}{r-r_0})}\Big)^{\frac 1{r_0}-\frac {1}r}\\
\le &C |b|_{L^r(B^{\beta}_{p,q})} (S-t)^{\frac 1{r_0}-\frac {1}r-\theta},
\end{align*}
using  Proposition \ref{PROP_APPROX} 
and Lemma \ref{lem_quad_gronv_C1}  for the last but one inequality.

\end{proof}

\paragraph{Existence results}
We first \textcolor{black}{specify} \textcolor{black}{the} canonical space introduced in \textcolor{black}{Section \ref{sec_strategy}}
$$\Omega_\alpha:=\begin{cases} \mathcal C([t,\textcolor{black}{S}];\R^d), \ \alpha=2\textcolor{black}{,}\\
\mathbb D([t,\textcolor{black}{S}];\R^d),\ \alpha \in (1,2).
\end{cases}$$
\textcolor{black}{A probability measure $\mathbf P$ on the canonical space $\Omega_\alpha$} solves the non-linear martingale problem related to \eqref{main} on $[t,\textcolor{black}{S}] $ if:
\begin{itemize}
\item[(i)] $\mathbf P\circ x(t)^{-1}=\mu$;
\item[(ii)] for a.a. $s\in(t,\textcolor{black}{S}]$, $\mathbf P\circ x(s)^{-1}$ is absolutely continuous w.r.t. Lebesgue measure and its density belongs to $L_{{\rm w}_\theta}^{\infty}((t,\textcolor{black}{S}],B_{p',1}^{-\beta})$.
\item[(iii)] for all $f$ in $\mathcal C^1([t,\textcolor{black}{S}],\mathcal C_{\textcolor{black}{0}}^2(\mathbb R^d))$, the process
\begin{equation}\label{MP_NL}
\Bigg\{f(s,x(s))-f(t,x(t))-\int_t^s \Big(\textcolor{black}{\partial_v f(v,x(v))}+\mathcal B_{\mathbf P\circ x(v)^{-1}}(v,x(v)) \cdot {\color{black}\nabla}
 f(v,x(v))+L^\alpha(f)(v,x(v))\Big)\,dv\Bigg\}_{t\le s\le \textcolor{black}{S}},\tag{${\rm MP}_{{\rm NL}}$}
\end{equation}
is a $\mathbf P$ martingale. 
\end{itemize}

\textcolor{black}{We recall that the  smoothness properties required on the marginal laws of the canonical process {\color{black}under} $\mathbf P$  allow to define  almost everywhere the non-linear drift in \eqref{MP_NL}. Anyhow, this latter might still have time singularities, which prevents  from using \textit{standard} results to ensure well-posedness}.

\textcolor{black}{From now on, for} $\mathbf P( x(v)\in dx):=\mathbf P_{t,\mu}(v,dx)=\brho_{t,\mu}(v,x)dx $,  \textcolor{black}{we denote with a slight abuse of notation} $\mathcal B_{\mathbf P\circ x(v)^{-1}}(r,\cdot)=\mathcal B_{\brho_{t,\mu}}(v,\cdot) $.

\begin{prop}\label{prop_ExistenceMain} Let \eqref{cond_coeff_SPKR} or \eqref{THE_COND_CI} be in force.  Let $(\mathbf P^\varepsilon)_{\varepsilon >0}$ denote the solution to the non-linear martingale problem related to \eqref{main_smoothed}. Then any limit point of a converging subsequence $(\mathbf P^{\varepsilon_k})_{k}, \ \varepsilon_k\underset{k}{\rightarrow}0 $, in $\mathcal P(\Omega_\alpha)$ equipped with its weak topology,  solves the non-linear martingale problem related to \eqref{main}. 
\end{prop}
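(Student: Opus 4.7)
The plan is to identify any weak limit $\mathbf{P}$ of a subsequence $(\mathbf{P}^{\varepsilon_k})_k$ by verifying the three items (i)--(iii) defining the non-linear martingale problem, exploiting the Fokker-Planck estimates of Section \ref{ESTI_FOR_FK} and the uniform drift control of Lemma \ref{LEMME_FOR_TIGHTNESS_AND_IDENTIFICATION}. Point (i) is immediate: since each $\mathbf{P}^{\varepsilon_k}\circ x(t)^{-1}=\mu$ and the evaluation map $\omega\mapsto \omega(t)$ is continuous on $\Omega_\alpha$, weak convergence yields $\mathbf{P}\circ x(t)^{-1}=\mu$.

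For (ii), I would argue that for every $s\in (t,S]$ the marginal $\mathbf{P}\circ x(s)^{-1}$ coincides with $\brho_{t,\mu}(s,\cdot)\,dy$, where $\brho_{t,\mu}$ is the density produced in Lemma \ref{lem_ex_duha_SPKRcase_BIS}. Indeed, by Lemma \ref{SECOND_STAB_BIS} we have $\brho^{\varepsilon_k}_{t,\mu}(s,\cdot)\to\brho_{t,\mu}(s,\cdot)$ in $L^1(\R^d)$ for a.e.\ $s$, so that for every $\varphi\in\mathcal C_b(\R^d)$
\[
\int\varphi(y)\,\mathbf{P}^{\varepsilon_k}(x(s)\in dy)=\int\varphi(y)\brho^{\varepsilon_k}_{t,\mu}(s,y)\,dy\;\xrightarrow[k]{}\;\int\varphi(y)\brho_{t,\mu}(s,y)\,dy.
\]
On the other hand, the weak convergence in $\mathcal P(\Omega_\alpha)$ gives the same limit to be $\int\varphi\,d(\mathbf{P}\circ x(s)^{-1})$ at any continuity point of $s\mapsto\mathbf{P}\circ x(s)^{-1}$, hence for a.e.\ $s$. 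The required Besov integrability in $L^\infty_{{\rm w}_\theta}((t,S],B^{-\beta}_{p',1})$ is then inherited from Lemma \ref{lem_ex_duha_SPKRcase_BIS} via the embedding \eqref{BesovEmbedding} (recall $\vartheta\Gamma>0$).

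For (iii), fix $f\in\mathcal C^1([t,S],\mathcal C^2_0(\R^d))$, times $t\le u\le s\le S$, and a bounded continuous $\mathcal F_u$-measurable functional $\Phi$ on $\Omega_\alpha$. Writing $M^{\varepsilon_k}_s$ for the martingale candidate built from $\mathcal B^{\varepsilon_k}_{\brho^{\varepsilon_k}_{t,\mu}}$ and $M_s$ for the target one built from $\mathcal B_{\brho_{t,\mu}}$, I have $\mathbb E^{\mathbf{P}^{\varepsilon_k}}[(M^{\varepsilon_k}_s-M^{\varepsilon_k}_u)\Phi]=0$ and decompose
\[
\mathbb E^{\mathbf{P}^{\varepsilon_k}}[(M_s-M_u)\Phi]=\mathbb E^{\mathbf{P}^{\varepsilon_k}}[(M^{\varepsilon_k}_s-M^{\varepsilon_k}_u)\Phi]+\mathbb E^{\mathbf{P}^{\varepsilon_k}}[(M_s-M^{\varepsilon_k}_s-(M_u-M^{\varepsilon_k}_u))\Phi].
\]
The contributions from $f(\cdot,x(\cdot))$ and $L^\alpha f(\cdot,x(\cdot))$ are bounded continuous in $\omega\in\Omega_\alpha$ and pass to the limit by weak convergence. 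The genuine obstacle, as anticipated, is the non-linear drift term $\int_u^s \mathcal B_{\brho_{t,\mu}}(v,x(v))\cdot\nabla f(v,x(v))\,dv$, which is neither bounded nor continuous in the path.

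To overcome this, I would split the difference between $\mathcal B^{\varepsilon_k}_{\brho^{\varepsilon_k}_{t,\mu}}$ and $\mathcal B_{\brho_{t,\mu}}$ using \eqref{YOUNG},
\[
|\mathcal B^{\varepsilon_k}_{\brho^{\varepsilon_k}_{t,\mu}}(v,\cdot)-\mathcal B_{\brho_{t,\mu}}(v,\cdot)|_{B^{0}_{\infty,1}}\le \cv\big(|b^{\varepsilon_k}-b|_{B^{\beta}_{p,q}}|\brho^{\varepsilon_k}_{t,\mu}|_{B^{-\beta+\vartheta\Gamma}_{p',1}}+|b|_{B^{\beta}_{p,q}}|\brho^{\varepsilon_k}_{t,\mu}-\brho_{t,\mu}|_{B^{-\beta+\vartheta\Gamma}_{p',1}}\big),
\]
and conclude via Proposition \ref{PROP_APPROX}, Lemma \ref{lem_quad_gronv_C1} and Lemma \ref{SECOND_STAB_BIS} together with the H\"older argument of Lemma \ref{LEMME_FOR_TIGHTNESS_AND_IDENTIFICATION} that $\mathcal B^{\varepsilon_k}_{\brho^{\varepsilon_k}_{t,\mu}}\to\mathcal B_{\brho_{t,\mu}}$ in $L^{r_0}((t,S],L^\infty)$. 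The cross term $\mathbb E^{\mathbf{P}^{\varepsilon_k}}[(M_s-M^{\varepsilon_k}_s-\cdots)\Phi]$ is then controlled by $|\nabla f|_\infty|\Phi|_\infty(S-u)^{1-1/r_0}\|\mathcal B^{\varepsilon_k}_{\brho^{\varepsilon_k}_{t,\mu}}-\mathcal B_{\brho_{t,\mu}}\|_{L^{r_0}(L^\infty)}\to 0$. For the remaining term I further mollify $\mathcal B_{\brho_{t,\mu}}$ into $\mathcal B^\delta$ (time-space smooth), write
\[
\int_u^s \mathcal B_{\brho_{t,\mu}}\cdot\nabla f(v,x(v))\,dv=\int_u^s \mathcal B^\delta\cdot\nabla f(v,x(v))\,dv+\int_u^s(\mathcal B_{\brho_{t,\mu}}-\mathcal B^\delta)\cdot\nabla f(v,x(v))\,dv,
\]
and observe that the first integrand is continuous bounded in $\omega$ (almost every $v$ being a continuity point of a Skorokhod limit), hence passes to the limit under $\mathbb E^{\mathbf{P}^{\varepsilon_k}}[\cdot\,\Phi]$. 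The error involving $\mathcal B_{\brho_{t,\mu}}-\mathcal B^\delta$ is handled uniformly in $k$ by the same $L^{r_0}(L^\infty)$ bound of Lemma \ref{LEMME_FOR_TIGHTNESS_AND_IDENTIFICATION}, where crucially both $\mathbf{P}^{\varepsilon_k}$ and $\mathbf{P}$ have marginals dominated by densities in $L^\infty_{{\rm w}_\theta}(B^{-\beta+\vartheta\Gamma}_{p',1})$. Letting first $k\to\infty$ and then $\delta\to 0$ yields $\mathbb E^{\mathbf{P}}[(M_s-M_u)\Phi]=0$, which is the sought martingale property. The main obstacle is indeed this last double limit, where the non-linearity couples the drift to the law currently being constructed; the dequadrification type estimates of Lemma \ref{lem_quad_gronv_C1} providing a \emph{uniform} Besov bound on $\brho^{\varepsilon_k}_{t,\mu}$ are what make the argument close.
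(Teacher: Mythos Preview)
Your argument is correct and runs on the same engine as the paper's: both rest on the convergence $\mathcal B^{\varepsilon_k}_{\brho^{\varepsilon_k}_{t,\mu}}\to\mathcal B_{\brho_{t,\mu}}$ in $L^{r_0}((t,S],L^\infty)$, obtained from Lemma~\ref{SECOND_STAB_BIS}, Lemma~\ref{lem_quad_gronv_C1} and Proposition~\ref{PROP_APPROX} through the H\"older step of Lemma~\ref{LEMME_FOR_TIGHTNESS_AND_IDENTIFICATION}. Two differences are worth noting. First, the paper also proves tightness of $(\mathbf P^\varepsilon)_\varepsilon$ (via the Aldous/Kolmogorov criterion, using the uniform bound \eqref{unifestim_drift_gencase_CR} on the drift integral); you omit this, which is defensible since the statement only concerns limit points of an already converging subsequence. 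Second, for the martingale identification the paper is more economical: once the $L^{r_0}(L^\infty)$ convergence of the drifts is established, it directly invokes \cite[Lemma~5.1]{EthierKurtz-85} to pass to the limit in $\mathbb E_{\mathbf P^{\varepsilon_k}}\big[\prod_i\Psi_i(x(t_i))\int_t^s\mathcal B^{\varepsilon_k}_{\brho^{\varepsilon_k}_{t,\mu}}\cdot\nabla\phi\,dv\big]$, whereas you write out an equivalent double-limit argument by hand via the auxiliary mollification $\mathcal B^\delta$. One small slip in your displayed Young estimate: when $p\vee q\vee r=\infty$ you cannot conclude $|b^{\varepsilon_k}-b|_{L^r(B^\beta_{p,q})}\to 0$ (compare \eqref{SMOOTH_APP_GEN} with \eqref{SMOOTH_APPR_FINITE}); the paper fixes this by pairing $|b^{\varepsilon_k}-b|_{L^{\check r}(B^{\beta-\vartheta\Gamma}_{p,q})}$ with the extra regularity $|\brho^{\varepsilon_k}_{t,\mu}|_{B^{-\beta+\vartheta\Gamma}_{p',1}}$ you already carry, which is an easy correction in your line.
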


\begin{proof}[Proof of Proposition \ref{prop_ExistenceMain}] We prove tightness and then prove any converging subsequence solves the non-linear martingale.

\noindent\textit{Tightness.} From the Aldous tightness criterion (see e.g. \cite[Theorem 16.10]{Billingsley-99}) if $\alpha<2$ or \textcolor{black}{the Kolmogorov one if $\alpha=2 $} \cite[Theorem 7.3]{Billingsley-99}, in the current additive noise setting,   the tightness of $(\mathbf P^\varepsilon)_{\varepsilon >0}$ follows from the uniform (w.r.t. $\varepsilon$) almost-sure continuity of $s\mapsto \int_t^s \mathcal B^\varepsilon_{\brho^\varepsilon_{t,\mu}}(v,X^{\varepsilon,t,\mu}_v) dv$. Inequality \eqref{unifestim_drift_gencase_CR} readily implies this property.

\noindent\textit{Limit points.} Let $(\mathbf P^{\varepsilon_k})_k$ be a converging subsequence and denote by $\mathbf P$ its limit. Additionally to the weak convergence of $(\mathbf P^{\varepsilon_k})_k$ toward\textcolor{black}{s} $\mathbf P$,  Lemma \ref{SECOND_STAB_BIS} also gives that the 
{\color{black}marginal distributions} $\big(\mathbf P^{\varepsilon_k}_{t,\mu}(s,dx)=\brho^{\varepsilon_k}_{t,\mu}(s,x)\,dx\big)_k$ strongly \textcolor{black}{converge}  toward\textcolor{black}{s} $\mathbf P_{t,\mu}(s,dx)=\brho_{t,\mu}(s,x)\,dx$ in $L_{{\rm w}_\theta}^{\infty}((t,\textcolor{black}{S}],B^{-\beta}_{p',1})$. Following the proof of Lemma \ref{LEMME_FOR_TIGHTNESS_AND_IDENTIFICATION}, this strong convergence also yields the convergence of $(\mathcal B^{\varepsilon_k}_{\brho^{\varepsilon_k}_{t,\mu}})_k$ toward\textcolor{black}{s} $\mathcal B_{\brho_{t,\mu}}$ in $L^{r_0}(\textcolor{black}{(t,S]},L^\infty)$. Indeed, for $r_0$ as in the quoted lemma:
\begin{eqnarray*}
&&|\mathcal B^{\varepsilon_k}_{\brho^{\varepsilon_k}_{t,\mu}}-\mathcal B_{\brho_{t,\mu}}|_{L^{r_0}(\textcolor{black}{(t,S]},L^\infty)}\textcolor{black}{\underset{\eqref{EMBEDDING}}{\le}}\textcolor{black}{C} |\mathcal B^{\varepsilon_k}_{\brho^{\varepsilon_k}_{t,\mu}}-\mathcal B_{\brho_{t,\mu}}|_{L^{r_0}(\textcolor{black}{(t,S]},B_{\infty,1}^0)}\\
&\le& C\Bigg(|\brho^{\varepsilon_k}_{t,\mu}|_{L_{{\rm w}_\theta}^{\infty}((t,\textcolor{black}{S}]B^{-\beta\textcolor{black}{+\vartheta\Gamma}}_{p',1})}|b^{\varepsilon_k}-b|_{L^{\textcolor{black}{\check r}}(B^{\beta\textcolor{black}{-\vartheta\Gamma}}_{p,q})}(T-t)^{\check \Theta}\\
&&+
|b|_{L^r(B^{\beta}_{p,q})}\|\brho_{t,\mu}^{\varepsilon_k}-\brho_{t,\mu}\|_{L_{{\rm w}_\theta}^\infty((t,\textcolor{black}{S}],B_{p',1}^{-\beta+\vartheta\Gamma})}(T-t)^\Theta\Bigg)\underset{k}\to 0,
\end{eqnarray*}
\textcolor{black}{with $\check r=r ,  \textcolor{black}{\check \Xi}=\textcolor{black}{\Xi}$ if $r<+\infty$ and any finite $\check r$ large enough if $r=+\infty $ and $ \textcolor{black}{\check \Xi}=\frac 1{r_0}-\frac {1}{\check r}-\theta, \check r>r_0 $ in that case. Convergence now follows from Proposition \ref{PROP_APPROX} and Lemmas \ref{lem_quad_gronv_C1} and \ref{SECOND_STAB_BIS}}.

As a direct consequence of the above bound, we get that for all $\Phi\in \textcolor{black}{C_0^\infty}((t,T)\times \R^d,\R^d)$,
\[
\lim_{k\rightarrow \infty}\int_t^T \int \mathcal B^{\varepsilon_k}_{\brho^{\varepsilon_k}_{t,\mu}}(s,x)\cdot \Phi(s,x)\,dx\,ds=
\int_t^T \int \mathcal B_{\brho^{}_{t,\textcolor{black}{\mu}}}(s,x)\cdot \Phi(s,x)\,dx\,ds\textcolor{black}{,}
\]
{\color{black} which in turn allows to obtain, together with \eqref{unifestim_drift_gencase_CR}, dominated convergence Theorem and standard computations\footnote{{\color{black}Note indeed that from the strict inequalities in Lemma \ref{LEMME_FOR_TIGHTNESS_AND_IDENTIFICATION}, it can be shown that the drift lies in $L^{r_0}((t,S],B^e_{\infty,1})$, for some $0<e<<1$ and is thus a.e. continuous in space.}} 
that for any $0\le t_1\le \cdots \le t_i\le \cdots \le t_n\le t\le s\le T$, $\Psi_1,\cdots,\Psi_n$ continuous bounded, and 
for any $\phi$ of class $ C^2_{\textcolor{black}{0}} \textcolor{black}{(\R^d,\R)}$, }
\[
\mathbb E_{\mathbf P^{\varepsilon_k}}\!\!\left[\Pi_{i=1}^n\Psi_i(x(t_i))\!\int_t^s\!\mathcal B^{\varepsilon_k}_{\brho^{\varepsilon_k}_{t,\mu}}(v,x(v))\cdot \nabla \phi(x(v))\,dv\right]\to_k
\!\!\mathbb E_{\mathbf P^{}}\left[\Pi_{i=1}^n\Psi_i(x(t_i))\!\int_t^s\!\mathcal B_{\brho^{}_{t,\textcolor{black}{\mu}}}(v,x(v))\cdot \nabla \phi(x(v))\,dv\right].
\]
This \textcolor{black}{exactly means} that $\mathbf P$ solves the non-linear martingale problem related to \eqref{main}.
\end{proof}

\paragraph{Weak uniqueness results}
\begin{prop}[Uniqueness result]\label{prop_UniquenessMain_gencas} Under the assumption \eqref{cond_coeff_SPKR} or \eqref{THE_COND_CI}, for any $\mu$ in $ \mathcal P(\R^d)\cap B_{p_0,q_0}^{\beta_0}(\R^d)$ and \textcolor{black}{$0\le t<S<\mathcal T_1$, with $\mathcal T_1 $ as in Theorem \ref{main_thm_W}},  the SDE \eqref{main} admits at most one weak solution 
 s.t. its marginal laws $\big(\bmu^{t,\mu}_s(\cdot)\big)_{s\in [t,T]} $ have a density for a.e. $s$ in $(t,\textcolor{black}{S}]$, i.e. $\bmu^{t,\mu}_s(dx) = \brho_{t,\mu}(s,x)dx$
and
$\brho_{t,\mu}\in L_{{\rm w}_\theta}^{\infty}(\textcolor{black}{(t,S]},B_{p',1}^{-\beta+\vartheta \Gamma}), \ \vartheta,\ \Gamma$ as in Lemmas \ref{SECOND_STAB_BIS}, \ref{lem_unifesti_gencase2_RELOADED} respectively. 
\end{prop}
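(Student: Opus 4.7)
The plan is to reduce weak uniqueness for the non-linear SDE \eqref{main} to weak uniqueness for a linear stable SDE with a drift in a Krylov--R\"ockner type Lebesgue--Besov class, using the uniqueness of the mild solution to the Fokker--Planck equation established in Lemma \ref{lem_uniq_SPKRcase_BIS} to first pin down the time marginals.

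Let $\mathbf{P}^{(1)}$ and $\mathbf{P}^{(2)}$ be two solutions to the non-linear martingale problem \eqref{MP_NL} on $[t,S]$ with initial law $\mu$, whose marginal densities $\brho^{(1)}_{t,\mu}, \brho^{(2)}_{t,\mu}$ both belong to $L_{{\rm w}_\theta}^{\infty}((t,S],B_{p',1}^{-\beta+\vartheta\Gamma})$. The first step is to show that each $\brho^{(i)}_{t,\mu}$ satisfies the mild Duhamel representation \eqref{main_DUHAMEL}. Starting from \eqref{MP_NL}, testing against $\varphi(v,x)=p^\alpha_{s-v}(x-y)\psi(v)$ for smooth cut-offs $\psi$ and passing to the limit, one obtains the Duhamel formula exactly as in Lemma \ref{lem_ex_duha_SPKRcase_BIS}. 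The integrability condition $\brho^{(i)}_{t,\mu}\in L_{{\rm w}_\theta}^{\infty}((t,S],B_{p',1}^{-\beta+\vartheta\Gamma})$ ensures, via \eqref{YOUNG}, \eqref{PROD1}, \eqref{EQ_DUALITY} and the heat kernel bound \eqref{SING_STABLE_HK}, that the non-linear drift $\mathcal{B}_{\brho^{(i)}_{t,\mu}}(v,\cdot)\,\brho^{(i)}_{t,\mu}(v,\cdot)\star \nabla p^\alpha_{s-v}$ is integrable in $v$, with singular but integrable weights controlled by $(v-t)^{-2\theta}(s-v)^{-(1-\beta)/\alpha}$ (resp. $(s-v)^{-1/\alpha}$ under \eqref{THE_COND_CI} after an integration by parts).

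Once both densities $\brho^{(1)}_{t,\mu}, \brho^{(2)}_{t,\mu}$ are identified as mild solutions of \eqref{main_DUHAMEL} in $L_{{\rm w}_\theta}^{\infty}((t,S],B_{p',1}^{-\beta+\vartheta\Gamma})$, Lemma \ref{lem_uniq_SPKRcase_BIS} directly yields $\brho^{(1)}_{t,\mu}=\brho^{(2)}_{t,\mu}=:\brho_{t,\mu}$ on $(t,S]$ for any $S<\mathcal{T}_1$. In particular, the non-linear drifts coincide: $\mathcal B_{\brho^{(1)}_{t,\mu}}=\mathcal B_{\brho^{(2)}_{t,\mu}}=:\mathfrak B$, and Lemma \ref{LEMME_FOR_TIGHTNESS_AND_IDENTIFICATION} gives $\mathfrak B \in L^{r_0}((t,S],B^0_{\infty,1})$ for some $r_0 \in \big(\frac{\alpha}{\alpha-(1-\beta\I_{\beta\in(-1,0]})},\frac{r}{1+r\theta}\big)$, in particular satisfying $\alpha/r_0 < \alpha-1$.

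It remains to conclude that both $\mathbf P^{(1)},\mathbf P^{(2)}$ solve the \emph{linear} martingale problem associated with the frozen drift $\mathfrak B$ and the stable generator $L^\alpha$, for which weak uniqueness is a known Krylov--R\"ockner type result in the stable setting (see e.g. \cite{CdRM-20}, \cite{xie:zhan:20} and the discussion in \cite{chau:jabi:meno:22-1}): the drift lies in $L^{r_0}(B^0_{\infty,1}) \hookrightarrow L^{r_0}(L^\infty)$ with $\alpha/r_0<\alpha-1$, so the linear sub-critical threshold is met. Applying this linear weak uniqueness on $[t,S]$ yields $\mathbf{P}^{(1)}=\mathbf{P}^{(2)}$ on the canonical space $\Omega_\alpha$. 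The main obstacle in this scheme is the rigorous derivation of the Duhamel representation from the martingale problem in the first step, as it requires handling time singularities in the non-linear drift; this is however covered by arguments strictly parallel to those of Lemma \ref{lem_ex_duha_SPKRcase_BIS}, the only difference being that the mollification in $\varepsilon$ is replaced by a direct regularisation of the test function, which is harmless since the \emph{a priori} bounds on $\brho^{(i)}_{t,\mu}$ in the weighted Besov norm of Lemma \ref{lem_quad_gronv_C1} are assumed by hypothesis.
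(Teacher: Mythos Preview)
Your proposal is correct and follows essentially the same route as the paper: identify the marginal densities via Fokker--Planck uniqueness (Lemma \ref{lem_uniq_SPKRcase_BIS}), freeze the resulting common drift, check via Lemma \ref{LEMME_FOR_TIGHTNESS_AND_IDENTIFICATION} that $\mathfrak B\in L^{r_0}(L^\infty)$ with $\alpha/r_0<\alpha-1$, and then appeal to linear weak uniqueness in the stable setting \cite{CdRM-20}. The paper condenses all of this into a reference to the corresponding Proposition and Lemma in \cite{chau:jabi:meno:22-1}, whereas you have spelled out the steps (including the passage from the martingale problem to the Duhamel representation) explicitly.
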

\begin{proof}\textcolor{black}{We first recall that from the uniqueness result for the Fokker-Planck equation (Lemma \ref{lem_uniq_SPKRcase_BIS} above), the non-linear drift $\mathcal B $ is uniquely determined}. It then suffices, following Proposition 13 in \cite{chau:jabi:meno:22-1}, to check that $\mathcal B $ viewed as the drift of a \textit{linear} SDE belongs to $L^{\sc}(L^\infty) $ with $1\le \sc \le \infty $ and
$$ \frac \alpha \sc<\alpha-1.$$
This is exactly Lemma 14 in the above reference, \textcolor{black}{which appeals to the results of \cite{CdRM-20} in the linear setting}. Now, taking $r_0\in \left( \frac{\alpha}{\alpha-\textcolor{black}{(1-\beta\I_{\beta\in (-1,0]})}},\frac{r}{1+r\theta}\right)$ as in Lemma \ref{LEMME_FOR_TIGHTNESS_AND_IDENTIFICATION} precisely gives this condition with $\sc=r_0 $. This completes the proof.
\end{proof}

\paragraph{Strong uniqueness results}

\begin{prop}\label{Prop_STRONG_SOL}
\textcolor{black}{Assume \eqref{INIT_DATA}}.
 For any $\mu$ in $ \mathcal P(\R^d)\cap B_{p_0,q_0}^{\beta_0}(\R^d)$ and \textcolor{black}{$0\le t<S<\mathcal T_1$, with $\mathcal T_1 $ as in Theorem \ref{main_thm_W}}, there exists a unique strong solution to \eqref{main} such that its law $\bmu^{t,\mu}$ belongs to $L_{{\rm w}_\theta}^{\infty}(\textcolor{black}{(t,S]},B_{p',1}^{-\beta+\vartheta\Gamma})$ and such that for a.e. $s$ in $(t,\textcolor{black}{S}]$, $\bmu^{t,\mu}_s(dx) = \brho_{t,\mu}(s,x)dx$ whenever 

\begin{itemize}
\item[$\bullet $] \textcolor{black}{if $\beta\in (-1,0] $ the condition \eqref{COND_SPKR_STRONG_INTRO}  of Theorem \ref{main_thm_S} holds}. Namely,
\begin{equation*}
\Bigg(2-\frac 32\alpha+\frac \alpha r+\Big[-\beta+\frac dp  -\textcolor{black}{\zeta_0}
\Big]\Bigg)\vee \Bigg( 1-\alpha+\frac \alpha r+\Big[-\beta+\frac dp-\textcolor{black}{\zeta_0} \Big]_+\Bigg)<\beta.
\end{equation*}
\item[$\bullet $]  \textcolor{black}{if $\beta=-1 $ the condition \eqref{COND_CRITIQUE_INTRO} of Theorem \ref{main_thm_S} holds}. Namely,
\begin{equation*}
\Bigg(2-\frac 32\alpha+\frac \alpha r+\Big[\frac dp  -\textcolor{black}{\zeta_0}\Big]\Bigg) \vee \Bigg(- \alpha + \frac \alpha r +\textcolor{black}{[1 + \frac dp   -\zeta_0]_+}\Bigg)
<\beta=-1.
\end{equation*}
\end{itemize}
\end{prop}

\begin{proof}
Similarly to the weak well-posedness we focus on the integrability properties of the drift viewed as the one of a 
 \emph{linear} version of the McKean-Vlasov SDE \eqref{main} where the law is frozen. This therefore amounts to prove that 
\begin{align}\label{DEF_NL_DR}
\mathfrak b: [0,\textcolor{black}{S}] \times \R^d \ni (s,x) \mapsto \mathfrak b(s,x) =  \int_{\R^d} b(s,x-y) \bmu_s^{t,\mu}(dy)=\int_{\R^d}{\color{black}dy \ }b(\textcolor{black}{s},x-y)\brho_{t,\mu}(s,y)
,
\end{align}
satisfies a Krylov and R\"ockner type condition, see \cite{kryl:rock:05} if $\alpha=2$, or the criterion in  \cite[Theorem 2.4]{xie:zhan:20} if $\alpha \in (1,2)$. 

If $\alpha=2 $ we have already proved in the weak-uniqueness part that $ \mathfrak b\in L^{r_0}(L^\infty)$ so that the Krylov R\"ockner criterion $2/\sc+d/\ell<1=\alpha-1 $ actually holds with $\sc=r_0,\ell=+\infty $. This gives strong uniqueness in the diffusive case under \eqref{cond_coeff_SPKR} or \eqref{THE_COND_CI}. 

Let us turn to $\alpha \in (1,2) $ which actually requires some smoothness properties  additionally to the integrability conditions.
Namely,
\begin{itemize}
\item[-] For $\textcolor{black}{\alpha\in (1,2)}$,  strong well-posedness holds whenever the drift $\mathfrak b$ defined in \eqref{DEF_NL_DR} satisfies $(I-\Delta)^{\gamma / 2}\mathfrak b\in L^{\sc}(L^{\ell})$, \textcolor{black}{or \textcolor{black}{equivalently}, $\mathfrak b \in L^{\sc} \textcolor{black}{(}H^{\gamma,\ell}\textcolor{black}{)} $}, where $ H^{\gamma,\ell}$ stands for the Bessel potential space and $\gamma,\ell$, $\sc$ satisfy
\begin{equation}\label{COND_II}
\gamma \in \Big(1-\frac \alpha 2,1\Big),\quad \ell \in \Big(\frac{2d}\alpha\vee 2,\infty\Big),\quad \sc \in \Big(\textcolor{black}{\frac{\alpha}{\alpha-1}},\infty\Big),\quad \frac{\alpha}{\sc}+\frac{d}{\ell}<\textcolor{black}{\alpha-1}.
\end{equation}
\end{itemize}
{\color{black}
Note that the condition on $\mathfrak b$ in $(ii)$ \textcolor{black}{will actually follow if we manage to prove} that $  \mathfrak b\in L^{\sc}(B_{\ell,\textcolor{black}{1}}^{ \gamma })$.
We indeed recall $B_{\ell,\textcolor{black}{2}}^{\gamma} \hookrightarrow H^{\gamma,\ell}$ when $\ell \ge 2 $ \textcolor{black}{(see e.g. \cite[Th. 2.5.6 p.88]{Triebel-83a}} and  from 
\eqref{BesovEmbedding} for all $\gamma>0$,   
$B_{\ell,1}^{ \gamma } \hookrightarrow B_{\ell,\textcolor{black}{2}}^{ \gamma }$. 

From \eqref{YOUNG} and for $\ell$ meant to be large (but finite) write:
\begin{align*}
|\mathfrak b(s,\cdot)|_{B_{\ell,1}^\gamma}\le |b(s,\cdot)|_{B_{p,q}^{\beta}}|\brho_{t,\mu}(s,\cdot)|_{B_{\textcolor{black}{\ell_2},q'}^{\gamma-\beta}},\text{ with } \textcolor{black}{\ell_2=p'\frac{1}{\frac{p'}{\ell}+1}}. 
\end{align*}
\textcolor{black}{Note that one can choose any $\ell_2<p'$ \textit{close} to $p'$, since again $ \ell$ is arbitrarily  large but finite}. 
Write from the H\"older inequality, similarly to the proof of Lemma \ref{LEMME_FOR_TIGHTNESS_AND_IDENTIFICATION}:
\begin{align*}
\int_t ^{\textcolor{black}{S}}{\color{black} ds \ }|\mathfrak b(s,\cdot)|_{B_{\ell,1}^\gamma}^{ \sc} 
\le \Big(\int_t^{\textcolor{black}{S}} {\color{black} ds \ }|b(s,\cdot)|_{B_{p,q}^{\beta}}^{ \sc a}
\Big)^{\frac 1a}\Big(\int_t^{\textcolor{black}{S}}{\color{black}ds \ }|\brho_{t,\mu}(s,\cdot)|_{B_{\textcolor{black}{\ell_2},q'}^{\gamma-\beta}}^{ \sc a'} 
\Big)^{\frac 1{a'}}, \ a^{-1}+(a')^{-1}=1.
\end{align*}
Again, since $b\in L^{r}((t,{\textcolor{black}{S}}],B_{p,q}^\beta) $ the natural choice consists in taking  $a \sc=r $ \textcolor{black}{giving} $1/{a'}=1-1/{a}=1-{ \sc}/{r} \iff a'= r/({r- \sc})$ and $\gamma = \vartheta\Gamma$. \textcolor{black}{However, pay attention that, since the integrability index $\ell_2$ is slightly smaller than $p'$ we considered for the previous analysis of the density, we need to modify a bit the regularity index and consider a slightly smaller $\gamma $ than the one indicated above}. Namely, we get
\begin{align}
\int_t ^{\textcolor{black}{S}}{\color{black} ds \ }|\mathfrak b(s,\cdot)|_{B_{\ell,1}^\gamma}^{ \sc} 
\le & |b|_{L^r(B_{p,q}^{\beta})}^{ \sc }\Big(\int_t^{\textcolor{black}{S}}{\color{black}ds \ }|\brho_{t,\mu}(s,\cdot)|_{B_{\textcolor{black}{\ell_2},q'}^{\gamma-\beta}}^{ \sc \frac{r}{r-\sc}} 
\Big)^{\frac 1{a'}}
\notag\\
\le& |b|_{L^r(B_{p,q}^{\beta})}^{ \sc } |\brho_{t,\mu}(s,\cdot)|_{L_{{\rm w}_{\bar \theta}}^\infty({\textcolor{black}{(t,S]}},B_{\textcolor{black}{\ell_2},1}^{-\beta+\vartheta\bar\Gamma})}^{\sc} \Big(\int_t^{\textcolor{black}{S}}ds (s-t)^{-{\bar \theta}( \sc \frac{r}{r-\sc})}
\Big)^{\frac 1{a'}}\notag\\
\le& C|b|_{L^r(B_{p,q}^{\beta})}^{ \sc } ({\textcolor{black}{S}}-t)^{\frac 1{a'}-\bar \theta\sc}=C|b|_{L^r(B_{p,q}^{\beta})}^{ \sc } (\textcolor{black}{S}-t)^{\sc\big( \frac 1{\sc}- (\frac 1r+\bar \theta)\big)},
\label{EST_DRIFT_FO_STABLE_SU}
\end{align}
for any $\ell_2<p' $ with $\bar \Gamma=\Gamma-\bar \eta,\ \bar \theta=\theta-\bar \eta,\ \bar \eta:=\bar \eta(p'-\ell_2)>0 $, going to 0 when $\ell_2 $ goes to $p'$ and  with \textcolor{black}{$\Gamma,\theta =\theta$ as in \eqref{def_gamma}, \eqref{def_thetas} respectively}. Indeed, we can reproduce the previous steps, starting from the proof of Lemma \ref{lem_unifesti_gencase2_RELOADED} for the equation with mollified coefficients, in order to take into consideration a slightly smaller integration index.

Write indeed:
\begin{trivlist}
\item[-] Under \eqref{cond_coeff_SPKR}
\begin{eqnarray*}
 	\Big|\Big(\mathcal B_{\brho_{t,\mu}^\varepsilon}^\varepsilon(v,\cdot) \cdot  \brho_{t,\mu}^\varepsilon(v,\cdot)\Big) \star \nabla p^{\alpha}_{s-v}\Big|_{B^{-\beta+\Gamma}_{\ell_2,1}} 
 \le
 C \Big|\Big(\mathcal B_{\brho_{t,\mu}^\varepsilon}^\varepsilon(v,\cdot) \cdot  \brho_{t,\mu}^\varepsilon(v,\cdot)\Big) \Big|_{B^{\Gamma}_{\rho_1,\infty}} \Big|\nabla p^{\alpha}_{s-v}\Big|_{B^{-\beta}_{\rho_2,1}}.
 \end{eqnarray*}
 \item[-] Under \eqref{THE_COND_CI}
 \begin{eqnarray*}
 &&
 	\Big|\Big(\mathcal B_{\brho_{t,\mu}^\varepsilon}^\varepsilon(v,\cdot) \cdot  \brho_{t,\mu}^\varepsilon(v,\cdot)\Big) \star \nabla p^{\alpha}_{s-v}\Big|_{B^{-\beta+\Gamma}_{\ell_2,1}} \\
 &
 \le
 & 
 C \Big[\Big|\Big(\mathcal B_{\brho_{t,\mu}^\varepsilon}^\varepsilon(v,\cdot) \cdot \nabla \brho_{t,\mu}^\varepsilon(v,\cdot)\Big) \Big|_{B^{\Gamma}_{\rho_1,\infty}} +\Big|\Big(\div(\mathcal B_{\brho_{t,\mu}^\varepsilon}^\varepsilon(v,\cdot))  \brho_{t,\mu}^\varepsilon(v,\cdot)\Big) \Big|_{B^{\Gamma}_{\rho_1,\infty}}\Big]\Big| p^{\alpha}_{s-v}\Big|_{B^{-\beta}_{\rho_2,1}},
 \end{eqnarray*}
 with $1+(\ell_2)^{-1} =\rho_1^{-1}+\rho_2^{-1}.$ 

 \end{trivlist}

 Apply now the product rule \eqref{PROD1} from Theorem \ref{thm_paraprod_2} . We get:
 \begin{trivlist}
\item[-] Under \eqref{cond_coeff_SPKR}:
  \begin{eqnarray*}
 \Big|\Big(\mathcal B_{\brho_{t,\mu}^\varepsilon}^\varepsilon(v,\cdot) \cdot  \brho_{t,\mu}^\varepsilon(v,\cdot)\Big) \star \nabla p^{\alpha}_{s-v}\Big|_{B^{-\beta+\Gamma}_{\ell_2,1}} 
 \le
 C\Big|\mathcal B_{\brho_{t,\mu}^\varepsilon}^\varepsilon(v,\cdot)\Big|_{B^{ \Gamma}_{\bar \ell_1,\infty}} \Big|\brho_{t,\mu}^\varepsilon(v,\cdot)\Big|_{B^{ \Gamma}_{\bar \ell_2,1}}
 \Big|\nabla p^{\alpha}_{s-v}\Big|_{B^{-\beta}_{\rho_2,1}}.
 \end{eqnarray*}
\item[-] Under \eqref{THE_COND_CI}
 \begin{eqnarray*}
 &&
 	\Big|\Big(\mathcal B_{\brho_{t,\mu}^\varepsilon}^\varepsilon(v,\cdot) \cdot  \brho_{t,\mu}^\varepsilon(v,\cdot)\Big) \star \nabla p^{\alpha}_{s-v}\Big|_{B^{-\beta+\Gamma}_{\ell_2,1}} \\
 &
 \le
 & 
 C \Big[\Big|\mathcal B_{\brho_{t,\mu}^\varepsilon}^\varepsilon(v,\cdot)\Big|_{B_{\bar \ell_1,\infty}^\Gamma} \Big| \nabla \brho_{t,\mu}^\varepsilon(v,\cdot) \Big|_{B^{\Gamma}_{\bar \ell_2,1}} +\Big|\div(\mathcal B_{\brho_{t,\mu}^\varepsilon}^\varepsilon(v,\cdot))\Big|_{B^\Gamma_{\bar \ell_1,\infty}} \Big| \brho_{t,\mu}^\varepsilon(v,\cdot) \Big|_{B^{\Gamma}_{\textcolor{black}{\bar \ell_2},1}}\Big]\Big| p^{\alpha}_{s-v}\Big|_{B^{-\beta}_{\rho_2,1}},
 \end{eqnarray*}

 \end{trivlist}
  with $\frac 1 {\rho_1}=\frac 1 {\bar \ell_1}+\frac 1 {\bar \ell_2} $. 
  
  In order to repeat the previous procedure, one needs to take $ \bar \ell_2=\ell_2$. On the other hand, the previous choice yields
  $$ 1+\frac 1{\ell_2}-\frac 1{\rho_2}=\frac1{\rho_1}=\frac 1{\ell_2}+\frac 1{\bar \ell_1}\Longrightarrow 1-\frac 1{\rho_2}=\frac 1{\bar \ell_1}.$$
To fit the previous estimates it then remains to apply the Young inequality \eqref{YOUNG}:
\begin{trivlist}
\item[-] Under \eqref{cond_coeff_SPKR}
 \begin{align*}
 	\Big|\Big(\mathcal B_{\brho_{t,\mu}^\varepsilon}^\varepsilon(v,\cdot) \cdot  \brho_{t,\mu}^\varepsilon(v,\cdot)\Big) \star \nabla p^{\alpha}_{s-v}\Big|_{B^{-\beta+\Gamma}_{\ell_2,1}} 
 \le &C |b^\varepsilon(v,\cdot)|_{B^{\beta}_{p,q}}
 \Big|\brho_{t,\mu}^\varepsilon(v,\cdot)\Big|_{B^{ -\beta+\Gamma}_{\ell_2,1}}\Big|\brho_{t,\mu}^\varepsilon(v,\cdot)\Big|_{B^{ \Gamma}_{ \ell_2,1}}
 \Big|\nabla p^{\alpha}_{s-v}\Big|_{B^{-\beta}_{\rho_2,1}}\\
 \le& C|b^\varepsilon(v,\cdot)|_{B^{\beta}_{p,q}}
 \Big|\brho_{t,\mu}^\varepsilon(v,\cdot)\Big|_{B^{ -\beta+\Gamma}_{\ell_2,1}}^2
 \Big|\nabla p^{\alpha}_{s-v}\Big|_{B^{-\beta}_{\rho_2,1}}.
 \end{align*}
 \item[-] Under \eqref{THE_COND_CI}
  \begin{align*}
 &
 	\Big|\Big(\mathcal B_{\brho_{t,\mu}^\varepsilon}^\varepsilon(v,\cdot) \cdot  \brho_{t,\mu}^\varepsilon(v,\cdot)\Big) \star \nabla p^{\alpha}_{s-v}\Big|_{B^{-\beta+\Gamma}_{\ell_2,1}} \\
  \le
 & 
 C 
 \Big(|b^\varepsilon(v,\cdot)|_{B^{\beta}_{p,q}}+\Big|\div(b^\varepsilon(v,\cdot))\Big|_{B^\beta_{p,q}}\Big)
 \Big|\brho_{t,\mu}^\varepsilon(v,\cdot)\Big|_{B^{ -\beta+\Gamma}_{\ell_2,1}}^2
 \Big| p^{\alpha}_{s-v}\Big|_{B^{-\beta}_{\rho_2,1}},
 \end{align*}
with parameters
$1+\frac1{\bar \ell_1}=\frac 1p+\frac 1{\ell_2}$.
 \end{trivlist}
We thus deduce that $\frac 1{\bar \ell_1}=1-\frac 1{\rho_2}=\frac 1p+\frac 1{\ell_2}-1 \iff \frac{1}{\rho_2}=2-(\frac 1p+\frac 1{\ell_2})<1$ since $\ell_2<p' $. The procedure can be summed up in the following way: in order to silghtly decrease the integrability index in the density estimate, we can slightly increase, through $\rho_2 $ the integrability which is asked on the gradient of the heat kernel. Using \eqref{SING_STABLE_HK} we would get similarly to \eqref{CI_EPS_AVANT_GR_QUADRA_PROOF_2} under \eqref{cond_coeff_SPKR} and \eqref{PREAL_MULT_C2} under \eqref{THE_COND_CI}, 
\begin{eqnarray*}
&&|\brho_{t,\mu}^\varepsilon(s,\cdot)|_{B^{ -\beta+\Gamma}_{\ell_2,1}}\\ 
&\le&  c_1 |\mu|_{\textcolor{black}{B^{\tilde \beta_1}_{\tilde p_1,\tilde q_1}}} (s-t)^{-\textcolor{black}{\frac{1}{\alpha}[\Gamma+\bar \vartheta_0]_+}} \\
&&+C\big(|b|_{L^{r}(B^{\beta}_{p,q})}+\textcolor{black}{|{\rm div}(b)|_{L^{r}(B^{\beta}_{p,q})}\I_{\beta=-1}}\big)  \Bigg(\int_t^s \frac{dv}{(s-v)^{r'\big(\frac {1-\beta\I_{\beta\in (-1,0]}}{\alpha}+\frac d\alpha(1-\frac 1{\rho_2}) \big)}}  | \brho_{t,\mu}^\varepsilon(v,\cdot)|_{B^{-\beta+\Gamma}_{\ell_2, 1}}^{2r'} \Bigg)^{\frac 1{r'}}\label{CI_EPS_AVANT_GR_QUADRA_PROOF_RELAXED},
\end{eqnarray*}
with
$$\textcolor{black}{\tilde p_1=\min(p_0,\ell_2),\tilde q_1=q_0\big(1\vee \frac{\ell_2'}{p_0'}\big),\tilde \beta_1=\beta_0\big(1\wedge \frac{p_0'}{\ell_2'}\big)},\ \textcolor{black}{\bar \vartheta_0}:= \Big[-\beta+\frac{d}{ \ell_2'} - \big(\beta_0+\frac{d}{p_0'}\big)\textcolor{black}{\Big(1\wedge \frac{p_0'}{\ell_2'} \Big)}\Big], \ \frac 1{\ell_2}+\frac 1{\ell_2'}=1.$$
Following, up to the previous modifications, the arguments of Lemmas \ref{lem_unifesti_gencase2_RELOADED} and \ref{SECOND_STAB_BIS} then yields to \eqref{EST_DRIFT_FO_STABLE_SU}.

The previous computations give that we can actually take $\ell_2 $ as close as we want to $p'$ but in order to have $\ell<+\infty$. Also, for $\ell_2 $ close to $p'$, $ \rho_2$ is close to one.
Similarly, we can indeed take $\gamma=\bar \vartheta\Gamma $ with $\bar \vartheta\in (0,1) $, hence as close to 1 as desired.

\textcolor{black}{To guarantee that strong well-posedness holds, it thus remains to establish the first constraint on $\gamma $ in \eqref{COND_II} holds}. To this end it suffices to check when the inequality
\begin{equation}\label{LOW_BOUND_ON_GAMMA}
\Gamma>1-\frac \alpha 2 
\end{equation}
is true.

\textcolor{black}{Set now $ \vartheta_0:= \frac{1}\alpha\Big[-\beta+\frac{d}{ p} - \big(\beta_0+\frac{d}{p_0'}\big)\textcolor{black}{\Big(1\wedge \frac{p_0'}p\Big)}\Big]=\frac{1}\alpha\Big[-\beta+\frac{d}{ p} - \zeta_0\Big]$}. We recall that $\Gamma  $ defined in \eqref{def_gamma} can be equivalently rewritten as
\begin{equation*}
\Gamma=
\alpha+\beta\I_{\beta\in (-1,0]}-1-\frac \alpha r -\Big(-\beta+\frac dp- \textcolor{black}{\zeta_0}
\Big)-\bar \eta=\alpha+\beta\I_{\beta\in (-1,0]}-1-\frac \alpha r-\alpha\textcolor{black}{\vartheta_0}-\bar \eta, 
\end{equation*}
for some $\bar \eta>0$ meant to be small.
%
The previous condition thus rewrites
$$\beta(1+\I_{\beta\in (-1,0]}) >2-\frac 3 2 \alpha+\frac \alpha r +\frac dp-\textcolor{black}{\zeta_0}
,$$
}
which is precisely the condition appearing in the statement.
\end{proof}

\section{Connection with some physical and biological models}
\label{CONNEC_WITH_MODELS}
\textcolor{black}{We discuss in this section some specific applications of Theorems \ref{main_thm_W} and \ref{main_thm_S}. We consider  models related  to turbulence theory and particle methods in Computational Fluid Dynamics 
as well as systems arising from recent trends in Biology. We particularly focus on the  three following equations}:
\begin{trivlist}
\item[$\bullet$] The (scalar) Burgers equation,
\item[$\bullet$] The two dimensional vortex equation for the incompressible Navier-Stokes equations\textcolor{black}{,}
\item[$\bullet$] The {\color{black} truncated} parabolic-elliptic Keller-Segel equations for chemotaxis. 
\end{trivlist}

\textcolor{black}{The common feature of those three equations is that they can be written as scalar valued singular non-linear transport-diffusion PDEs of the following form}:
\begin{equation}\label{QuasiLin}
\partial_{{\color{black}s}}u(s,\cdot)+{\rm{div}}\Big (u(s,x)F(u(s,\cdot))\Big)-L^\alpha u(s,\cdot)=0,\,u(0,\cdot)=u^0,
\end{equation}
where the driving 
$F$ \textcolor{black}{writes} $F({\color{black}s},u)=K\star u(x)$\textcolor{black}{,} for $K$ a time-homogeneous strongly concentrating kernel. 
Importantly the systematic interpretation of the solution $u$, understood in a distributional sense, as the time marginal distributions $\bmu^{t,\mu}$ of \eqref{main} (taking therein  $t=0$) requires some preliminary considerations which \textcolor{black}{start} 
with the initial condition $u^0$. Due to the conservative form of \eqref{QuasiLin}, to ensure that $u(s,\cdot)$ can be viewed as a probability measure, \textcolor{black}{this property needs to be fulfilled by the initial condition $u(0,\cdot) $}. While this naturally restricts the physical interpretation of the model involved, beyond this situation, the McKean-Vlasov interpretation of \eqref{QuasiLin}  become\textcolor{black}{s trickier}. We refer to \cite{Jourdain-00} or \cite{MarPul-82} for related issues, see also Section \ref{VORTEX} below.

{\color{black}The focus on the Burgers and Navier-Stokes \textcolor{black}{equations} will allow us to briefly revisit some predominant literature from the eighties and nineties.} For the sake of clarity, points of comparison with the literature will be essentially focused \textcolor{black}{on} probabilistic models, leaving purposely aside a more complete survey on the \textcolor{black}{PDE} analysis of \eqref{QuasiLin}. For the same reasons, precise comparisons on the smallness of the initial condition and possible range of the time-horizons $\mathcal T_1$ will be left aside.

\textcolor{black}{Importantly, we will consider for the Keller-Segel equations, for simplicity and to enter the setting of the previous sections, a \textit{truncated} version of the singular drifts involved in this model. This is done in order to focus on the singularity (at the origin) leaving aside the behavior at infinity, which anyhow does not lead to additional specific difficulties. In the case of the Navier-Stokes equation, we will propose two approches: the first one consists in using Lebesgue-Besov embeddings in order to write the (truncated) interaction kernel $K$ as an element of a suitable Lebesgue space (whose spatial integrability index violates the Krylov-R\"ockner condition); the second one fully uses our analysis as the interaction kernel $K$ is viewed as an element of a Besov space with regularity index $\beta=-1$ and free  divergence. We emphasize that, in comparison with the first approach, the second one allows to remove the truncation. This shows that the dichotomy in our conditions \textbf{(C1)} and \textbf{(C2)} really matters, as part of the structure can obviously not be captured by embeddings.}

\subsection{The Burgers equation} In its most 
popular formulation, the Burgers equation corresponds to the scalar non-linear PDE:
\begin{equation}\label{Burgers1D}
\partial_su(\textcolor{black}{s},\cdot)+\frac{1}{2}\partial_x(u(\textcolor{black}{s},\cdot))^2-{\color{black}\nu}\triangle u(\textcolor{black}{s},\cdot)=0,\ s>0,\,u(0,\cdot)=u^0(\cdot),
\end{equation}
where the solution $u$ models the speed motion of a viscous fluid evolving on the real line $\mathbb R$ under the joint action of a nonlinear transport operator $\frac{1}{2}\partial_x(u(s,\cdot))^2=u(s,\cdot)\partial_xu(s,\cdot)$ and the viscous dissipation $\triangle u(s,\cdot)$  - for consistency with \eqref{main}, the kinematic viscosity $\nu$ has to be set to ${\color{black}1/2}$.  While \eqref{Burgers1D} initially depicts a one-dimensional pressure-less model of Navier-Stokes equation, the Burgers equation nowadays applies in various disciplines such as aerodynamics, molecular biology, cosmology and traffic modelling.

Its \textcolor{black}{fractional} version\footnote{also called fractal in some related papers, see \cite{FunWoy-98}}:
\begin{equation}\label{FractalBurgers1D}
\partial_su(s,\cdot)+\frac{1}{2}\partial_x(u(s,\cdot))^2-L^\alpha u(s,\cdot)=0,\ s>0,\,u(0,\cdot)=u^0(\cdot),
\end{equation}
which substitutes the characteristic \textcolor{black}{\textit{heat} dissipation operator $\Delta $ with} 
the fractional \textcolor{black}{one} $L^\alpha$, 
presents a particular interest for hydrodynamics and statistical turbulence 
(we refer the interested reader to \cite{BiFuWo-98} and references therein for a brief exposure of the physical interest of \eqref{FractalBurgers1D} and to \cite{BaFiLeBr-79} for the impact of modified \textcolor{black}{fractional} dissipativity on recovering some characteristic scaling laws in turbulence).

\textcolor{black}{From \eqref{NL_PDE_FK}, it is easily seen that the McKean-Vlasov model related to the Burgers corresponds to} an interaction kernel given by $\frac{1}{2}\delta_{\{0\}}$ where $\delta_{\{0\}}$ denotes the Dirac mass at 0. 

Properly, the resulting McKean-Vlasov model formulates as
\begin{equation}
\label{McKeanBurgers1D}
X_{{\color{black}s}}^{\textcolor{black}{0,\mu}}=\textcolor{black}{\xi}+\frac{1}{2}\int_0^{{\color{black}s}} \brho(v,X_v^{\textcolor{black}{0,\mu}})\,dv+{\mathcal W}_{\textcolor{black}{s}},\,\,\textcolor{black}{\xi}\sim u^0,\,\brho(v,\cdot)=\,\text{p.d.f of }\text{Law}(X_v),
\end{equation}
or as
\begin{equation}
\label{McKeanBurgers1D-Alt}
X_{{\color{black}s}}^{\textcolor{black}{0,\mu}}=\textcolor{black}{\xi}+\frac{1}{2}\int_0^{{\color{black}s}} \tilde{\mathbb E}[\delta_{\{X_v^{\textcolor{black}{0,\mu}}-\tilde X_v^{\textcolor{black}{0,\mu}}\}}]\,dv+{\mathcal W}_{{\color{black}s}},
\end{equation}
\textcolor{black}{where $(\tilde X_s^{0,\mu})_{s\ge 0} $ has under $\tilde {\mathbb P} $ the same law as $(X_s^{0,\mu})_{s\ge 0} $},
 provided \textcolor{black}{the law of} $X_s^{0,\mu}$ is absolutely continuous at all time $s\in(0,T]$.
 For the case $\alpha=2$, these formulations have been formally introduced in the seminal paper \cite{McKean-66} (together with the interpretation of a model of the Boltzmann equation).
 
While \cite{GutKac-83}, \cite{CalPul-83} and \cite{OsaKot-85} focused on the particle approximation and associated propagation of chaos properties through analytic techniques, existence and uniqueness of a solution to \eqref{McKeanBurgers1D} was, to the best of our knowledge, firstly established in \cite{Sznitman-86}.The author obtained therein the existence and uniqueness of a strong solution, with $\brho$ in $L^2(\textcolor{black}{(}0,T]\times \R^d)$ for any arbitrary time horizon $T$ (see Theorems 2.5 and 4.1 \textcolor{black}{therein}
) under the condition $u^0\in L^1(\R)\cap L^\infty(\R)$. 

  In the \textcolor{black}{fractional} case $\alpha\in(1,2)$, similar weak existence result\textcolor{black}{s} have been successively established in \cite{FunWoy-98} in the case of a symmetric stable noise with $1<\alpha<2$ and $u^0$ lying in the Sobolev space $H^1(\mathbb R)=W^{1,2}(\mathbb R)$. Uniqueness is only established  for one-time marginal distributions in the class $\brho\in L^\infty(\textcolor{black}{(}0,T]\times L^2(\mathbb R))\cap  L^2(\textcolor{black}{(}0,T]\times H^1(\mathbb R))$ (see Theorems 2.1 and 3.1 of the indicated reference).
  
 Recall from \eqref{lem_proba_in besov} that a Dirac measure \textcolor{black}{belongs to} the Besov $ B^{-d/p'}_{p,\infty}$ for $p\in[1,\infty]$. \textcolor{black}{In particular}, the interaction kernel in \eqref{McKeanBurgers1D}  lies in the space $L^\infty(B^{-1/p'}_{p,\infty})$. 
 
\textcolor{black}{On the one hand, we thus derive}, \textcolor{black}{ from condition \eqref{cond_coeff_SPKR} \textcolor{black}{and the definition in \eqref{def_zetra}}, that  weak well-posedness holds if}
\begin{equation}\label{cond_coeff_SPKR_BURGERS}
  1-\alpha +\Big[1-\zeta_0\Big]_+<-\frac 1{p'},\ \zeta_0=\bigg(\beta_0+\frac 1{p_0'}\bigg)\textcolor{black}{\left( 1 \wedge \frac{p_0'}{p} \right)}\textcolor{black}{.}
\end{equation}
{\color{black} As this bound holds for any $p$ in $[1,\infty]$, one may optimize it w.r.t. this parameter and thus choose $p'=+\infty$ and $p=1$ which in turn yields that $\beta=0$. We obtain that weak well-posedness holds as soon as
\begin{equation}\label{cond_coeff_SPKR_BURGERS}
\Big[1-\zeta_0\Big]_+< \alpha-1,\ \zeta_0=\bigg(\beta_0+\frac 1{p_0'}\bigg)\textcolor{black}{.}
\end{equation}
This allows to obtain weak well-posedness as soon as $\beta_0 -1/p_0>1-\alpha$ and gives, in the Brownian case $\alpha=2$ e.g. $\beta_0=0,\, p_0>1$ or $\beta_0>0,\, p_0=1$. Note that this gives precisely the condition \eqref{cond_gencase} for the initial condition.}

\textcolor{black}{On the other hand}, 
\eqref{COND_SPKR_STRONG_INTRO} becomes
 \begin{equation}\label{COND_SPKR_STRONG_BURGERS}
 \textcolor{black}{\Big(2-\frac 3{2}\alpha+1-\zeta_0\Big)\vee (1-\alpha+[1-\zeta_0]_+)}<-\frac 1{p'}\textcolor{black}{,}
 \end{equation}
{\color{black}and we derive from 
\eqref{COND_SPKR_STRONG_INTRO}, \eqref{COND_SPKR_STRONG_BURGERS} that strong uniqueness will hold as soon as $\beta_0+1/p_0'>3(1-\alpha/2) \Leftrightarrow \beta_0>2-3\alpha/2 + 1/p_0$ \textcolor{black}{in the considered case}, i.e. as soon as the initial condition satisfies \eqref{cond_gencase_S}}. This situation specifically reduces in the Brownian case, to $\beta_0+1>1/p_0$.

In particular, we recover the result in \cite{Sznitman-86} where strong existence and uniqueness for the non linear equation were obtained for $u^0\in L^1\cap L^\infty  $. Indeed, from \eqref{EMBEDDING} which implies $L^1\cap L^\infty\hookrightarrow L^{p_0}\hookrightarrow B^0_{p_0,\infty}$ for $p_0\in[1,\infty]$, the result follows taking $p_0>1$
.

Also, {\color{black}i}n the fractal case, $\alpha\in(1,2)$, taking $p=2$ and  identifying $W^{1,2}=B^1_{2,2}$ (see e.g. \cite{Triebel-83a}, Theorem 2.5.6, p. 88, and Theorem 2.3.9, p. 61) which yields $\beta_0=1,p_0=q_0=2 $, we recover the existence result of \cite{FunWoy-98}, and add to this result, weak uniqueness of the corresponding SDE from  \eqref{cond_coeff_SPKR_BURGERS} and \eqref{COND_SPKR_STRONG_BURGERS} provided $\frac 3{2}<\alpha<2$ (noting that for the considered parameters $\zeta_0=(1+\frac 12)=\frac 32$ so that the additional constraint $\Big(2-\frac 3{2}\alpha+1-\zeta_0\Big)<-\frac 12\iff \frac 43<\alpha$ is less stringent than the one needed for weak uniqueness).
\subsection{The 
vortex equation in dimension 2}\label{VORTEX}
The vortex equations (or vorticity equations) model the rotational properties of an incompressible Newtonian turbulent 
fluid flow. The motions of such \textcolor{black}{a} fluid are described, at each time ${\color{black}s}$, and each point $x$ of $\R^d$, $d\in \{2,3\} $, through their macroscopic velocity $u({\color{black}s},x)=(u^{(1)}({\color{black}s},x),\cdots,u^{(d)}({\color{black}s},x))$, and their evolution, characterized by the incompressible Navier-Stokes equations:
\begin{align*}
&\partial_{{\color{black}s}}u({\color{black}s},x)+(u({\color{black}s},x)\cdot \nabla)u({\color{black}s},x)=-\nabla p+\frac 1{2}\triangle u({\color{black}s},x),\,\,\nabla \cdot u({\color{black}s},x)=0,\,\,\,{\color{black}s}\ge  0,\,x\in\R^d,\\
&u(0,x)=u^0(x).
\end{align*}
For simplicity, we purposely focus the presentation of the equations \textcolor{black}{on \textit{function}-solutions} $u$ rather than distributional solution\textcolor{black}{s}, and remain in the classical dissipative case $\alpha=2$.  We also, again, set the kinematic viscosity to $1/2$. The divergence free constraint $\nabla\cdot u({\color{black}s},x)=0$ reflects the incompressibility of the \textcolor{black}{flow}
and  $\nabla p$ stands for the gradient of the pressure
 acting on the fluid.  

In the case of a two-dimensional $(d=2)$ flow, the vorticity $w({\color{black}s},x):=\text{curl}(u)({\color{black}s},x)=\nabla\times u({\color{black}s},x)$ is a scalar function driven by the equation
\begin{equation}\label{Vortex2D}
\begin{aligned}
&\partial_{{\color{black}s}}w({\color{black}s},x)+(u({\color{black}s},x)\cdot \nabla_x)w({\color{black}s},x)=\frac 1{2}\triangle w({\color{black}s},x)\,{\color{black},\,s}\ge 0,\,x\in\R^2,\\
&w(0,x)=\nabla\times u^0(x).
\end{aligned}
\end{equation}
This case is thus simpler than the {\color{black}three  dimensional} 
case,  \textcolor{black}{$d=3 $}, for which the vorticity is a field, i.e. the previous equation must then be understood as a system of $3$ equations.
The vorticity equation somehow decouples the non-linearity and allows to directly handle the pressure term (the curl of a gradient is zero).  The original velocity $u$ can be recovered, up to an additive constant, from the vorticity $w$ using the identity $\Delta u=\begin{cases}
\nabla^\bot w=\left( \begin{array}{c} -\partial_2 w\\
\partial_1 w\end{array}\right),\ d=2,\\
-\nabla \times w, \ d=3
\end{cases}$ 
which  follows from the incompressibility property and \textit{formally} leads to the identity
\begin{equation*}
u({\color{black}s},x) =K* w({\color{black}s},x),
\end{equation*}
where $K$ stands for the Biot-Savart kernel. Its expression actually appears applying the adjoint of the operator $-\nabla ^\bot $ for $d=2$, resp. $\nabla \times  $ for $d=3$, to the Poisson kernel $\mathfrak P $, i.e. $u=\begin{cases}(-\Delta)^{-1} (-\nabla^\bot w),\\
(-\Delta)^{-1}\nabla \times w,\ d=3 
\end{cases}$. Namely, 
\begin{equation}
K(x)=\begin{cases}-\nabla^\bot \mathfrak P(x),\ d=2,\\
\nabla \mathfrak P (x)\times,\ d=3
\end{cases}\ {\rm with}\ \ \ \ \mathfrak P(x)=\left\{
\begin{aligned}
&-\frac{\log(|x|)}{\color{black}(2\pi)}\,\text{if}\,d=2,\\
&\frac{1}{{\color{black}4\pi}|x|^{d-2}}\,\text{if}\,d=3,
\end{aligned}
\right. \label{GEN_BIOT_SAVART}
\end{equation}
using as well the convention $K*w(x)=\int_{\R^3} \nabla \mathfrak P(x-y) \times w(y) dy $ when $d=3$. As for the Burgers equation, the \textcolor{black}{\textit{fractal/fractional}} version of the Navier-Stokes equation\textcolor{black}{s}, and by extension of \eqref{Vortex2D}:
\begin{equation}\label{FractalVortex2D}
\begin{aligned}
&\partial_{{\color{black}s}}w({\color{black}s},x)+(u({\color{black}s},x)\cdot \nabla_x)w({\color{black}s},x)=L^\alpha w({\color{black}s},x)\,{\color{black},\,s}\ge 0,\,x\in\R^2,\\
&w(0,x)=\nabla\times u^0(x),
\end{aligned}
\end{equation}
 presents a particular physical interest - we again refer to \cite{BaFiLeBr-79}, and to the exhaustive presentation in \cite{KavErc-22}.


{\color{black} Within the diffusive setting $
\alpha=2$,   Chorin exploited in \cite{Chorin-94}} the vorticity equations to develop particle methods -commonly known today as vortex methods- for the simulation of turbulent fluid flows.
In \cite{MarPul-82}, Marchioro and Pulvirenti addressed the link between the two-dimensional vortex equation and the McKean-Vlasov model. 
They later exploited this link to validate Chorin's particle method, introducing a smoothed {\color{black}mean-field} particle
 approximation of {\color{black}\eqref{Vortex2D}} 
where the Poisson kernel  $\mathfrak P$ is regularized at the neighborhood of $0$. {\color{black}Then,} 
the authors established that the time marginal empirical measures propagate chaos toward the {\color{black}solution to \eqref{Vortex2D}}\textcolor{black}{,} 
even in the zero viscosity limit. Osada \cite{Osada-86} established a similar result for a non vanishing viscosity {\color{black}and without any smoothing of $K$.} {\color{black}While these results apply to a peculiar probabilistic interpretation of \eqref{Vortex2D}, M\'el\'eard \cite{Meleard-00,Meleard-01} considered a McKean-Vlasov representation of \eqref{Vortex2D} \textcolor{black}{of} the form:}
\begin{equation}
\label{NS2D}
X^{0,\mu}_s=\xi+\int_0^{\textcolor{black}{s}} \textcolor{black}{\tilde{\mathbb E}[h_0(\tilde X_0^{0,\mu})K(X_r^{0,\mu}-\tilde X^{0,\mu}_r)]}\,dr+\mathcal W_s,
\end{equation}
\textcolor{black}{where again $(\tilde X_s^{0,\mu})_{s\ge 0} $ has under $\tilde {\mathbb P} $ the same law as $(X_s^{0,\mu})_{s\ge 0} $, and  $h_0$ is a pre-factor which derives from the formulation of the problem in the McKean setting, i.e. solutions are sought as density functions whereas the initial condition $w(0,\cdot)$ is not necessarily one. We again refer to \cite{Jourdain-00} for additional related details}.

{\color{black}The well-posedness of \eqref{NS2D}, along a quantitative particle approximation,}
was first established in terms of a nonlinear martingale problem in \cite{Meleard-00} (see Theorems 1.2 and 2.4 therein), for a non-negative initial condition $w_0$ in $L^1\cap L^\infty$ (see Theorems 1.2 and 2.4 of the same reference), and later extended  in \cite{Meleard-01} (Theorems 1.2 and 3.4), to the case of  $w(0,\cdot)$ being a Radon measure of the form $\nabla\times u^0$ where $u^0$ belongs to a suitable Lorentz space.


Let us now discuss what can be derived from the approach developed in the current work in this setting. Observe first from \eqref{GEN_BIOT_SAVART} that for the two-dimensional vortex equation
$$K(x)=\frac{(-x_2,x_1)}{2\pi(x_1^2+x_2^2)}.$$
Observe that in the distributional sense ${\rm div} K=0 $. Since $|K(x)|\le C/|x| $,  $K$ can be viewed as an element of  $L_{{\rm loc}}^{2-\epsilon}$ for $\epsilon>0$ arbitrary. Essentially the singularity of $K$ is localized in the neighborhood of $0$, with $K$ being smooth and bounded outside this region. 

This kernel can be viewed in two different ways: one leaving aside the divergence free property of $K$, exploiting the bound $|K(x)|\le C/|x| $ and embedding the kernel into a Lebesgue space; the second one embedding $K$ into $B^{-1}_{\infty,\infty}$ through a specific representation of $K$ (at play in e.g. \cite{jabi:wang:18}). Interestingly, each view leads to the same range of admissible initial distributions for weak and strong well-posedness. Although the second view enables to avoid truncation, this is somehow a superficial difference.

\begin{trivlist}
\item[-] {\color{black}\textbf{Truncated kernel and $\beta=0 $}.}
 We first focus our attention on a truncated version of the kernel given by the drift $b(x)=K(x)\I_{B(0,R)}^*(x) $, where the cut-off $\I_{B(0,R)}^* $ stands for a mollification of the indicator function of the ball $ B(0,R)$ with support in $B(0,R+1) $ for a given radius $R>0$. 

As $b\in L^{2-\epsilon} $ for any $\varepsilon>0$,  from the embedding \eqref{EMBEDDING} $L^{2-\epsilon}\hookrightarrow B_{2-\epsilon,\infty}^0$, taking $p=2-\epsilon,q=\infty,\beta=0 $, we can enter  the setting of assumption \eqref{cond_coeff_SPKR},
provided that:
\begin{equation}\label{InitialCond-Vortex1}
1-\alpha+\left[\frac{2}{2-\epsilon}-\zeta_0\right]_+<0,
\end{equation}
{\color{black}for some $\varepsilon>0$. Note that for fixed $p,d$ and $\beta_0\ge 0$, $\zeta_0$ is maximal at $p_0'=p$ and is our case then equals to $\beta_0+2/(2-\varepsilon)$, still with an arbitrary choice of $\varepsilon$. As such, we derive that weak well posedness holds for any $\beta_0\ge 0$. Without optimizing in $p_0'$, we have no constraint for $\zeta_0\ge 1$ and the constraint $\zeta_0>2-\alpha$ when $\zeta_0 < 1$. The latter gives in turn $\beta_0 + 2/p_0' >1-\alpha \Leftrightarrow \beta_0> 2/p_0-\alpha$. }

On the other hand, from \eqref{COND_SPKR_STRONG_INTRO}, it holds that the non-linear SDE is strongly well posed if additionally:\\
\begin{equation}\label{InitialCond-Vortex2}
2-\frac 32\alpha+\frac{2}{2-\epsilon}-\zeta_0< 0\Longleftarrow \zeta_0>3-\frac 32\alpha .
\end{equation}
{\color{black} This gives that strong well-posedness hold for $\beta_0>1+2/p_0 - 3\alpha/2$.}
In the case $\alpha=2$,
weak and strong well-posedness hold for e.g. $\beta_0>0,p_0=1 $ or $\beta_0=0,p_0>1 $.\\
\item[-] {\color{black}\textbf{Full kernel and $\beta=-1 $}.}
				Alternatively, consider the interpretation (see e.g. \cite{jabi:wang:18}) $b(x)=K(x) $ with the vector field $K:\mathbb R^d\rightarrow \mathbb R^d$  written as
				$$
				K_i(x)=\frac 1{2\pi}\sum_{j=1}^2\partial_{x_j}V_{i,j}(x),\quad i=1,2,
				$$
				where $V$ is the anti-symmetric $2\times 2$-matrix field:
				\begin{equation*}
					V(x)=\begin{pmatrix}
						 0& -\arctan(x_1/x_2)\\
						\arctan(x_2/x_1) &0 
					\end{pmatrix}.
				\end{equation*}
			Immediately $\div(K)=0$
			. Indeed
			, for the inhomogeneous part of $|K|_{B^{-1}_{\infty,\infty}}$ we have directly from \eqref{HEAT_CAR},
			\begin{align*}
			|\mathcal F^{-1}(\phi\mathcal F(K))|_{L^\infty}&
			=\sum_{i,j}|\mathcal F^{-1}(\phi\mathcal F(\partial_{x_j}V_{i,j}))|_{L^\infty}=\sum_{i,j}|\mathcal F^{-1}(\xi_j\phi\mathcal F(V_{i,j}))|_{L^\infty}\\
			&=\sum_{i,j}|\mathcal F^{-1}(\xi_j\phi)\star  V_{i,j}|_{L^\infty}\le c(d)\big|\mathcal F^{-1}(\xi_j\phi)\big|_{L^1}|V|_{L^\infty}\le C|V|_{L^\infty}<\infty.
			\end{align*}
			For the thermic part, choosing $n=0$ in \eqref{HEAT_CAR}, we have
			\begin{align*}
				\mathcal T_{\infty,\infty}^{-1}(K):&=\sup_{v\in(0,1]}\left\{v^{(1/\textcolor{black}{\alpha})}|\tilde p_\alpha(v,\cdot)*K|_{L^\infty}\right\}\le \sum_{i,j}\sup_{v\in(0,1]}\left\{v^{(1/\textcolor{black}{\alpha})}|\partial_{x_j}\tilde p_\alpha(v,\cdot)*V_{i,j}|_{L^\infty}\right\}\\
				&\le  c(d)|V|_{L^\infty}\sup_{v\in(0,1]}\left\{v^{(1/\textcolor{black}{\alpha})}|\nabla\tilde p_\alpha(v,\cdot)|_{L^1}\right\}\le c(d)\ch|V|_{L^\infty}<\infty,
			\end{align*}
			using for the last step follows from \eqref{SING_STABLE_HK} (which naturally holds for the isotropic density $\tilde p$ and gives $|\nabla\tilde p_\alpha(v,\cdot)|_{L^1}\le \frac{\ch}{(v)^{{\frac {1}\alpha}}}$ for $v\in(0,1]$ along with the embedding $B^{0}_{1,q}\hookrightarrow L^1$).
			In this view,
			the weak wellposedness condition \eqref{THE_COND_CI}, for $p=\infty=r$ reduces to
				$$
				1-\alpha+\big[1-\zeta_0\big]_+<0\Leftrightarrow \zeta_0\ge 1\quad \text{or}\quad 2-\alpha<\zeta_0<1.
				$$
				Indeed, as in the former interpretation, we have no constraint for $\zeta_0\ge 1$ and the constraint $\zeta_0>2-\alpha$ when $\zeta_0 < 1$ and the range for the weak well-posedness is here similar to the one exhibited in \eqref{InitialCond-Vortex1}.
				
				Regarding strong wellposedness, \eqref{COND_CRITIQUE_INTRO} becomes: 
\begin{equation*}
	\Bigg(2-\frac 32\alpha+\Big[-\zeta_0 \Big]\Bigg)\vee \Bigg(- \alpha +[1   -\zeta_0]_+\Bigg)  <-1,
	\end{equation*}
	that is 
	$$
	\zeta_0>3-3\alpha/2\quad \text{and}\quad\Big\{\zeta_0\ge 1\quad\text{or} \quad 2-\alpha<\zeta_0<1\Big\}\Rightarrow \zeta_0>(2-\alpha)\vee (3(1-\alpha/2))=3(1-\alpha/2),
	$$
	which brings us back to \eqref{InitialCond-Vortex2}.

\end{trivlist}

\subsection{The \textcolor{black}{(truncated)} Keller-Segel model}\label{KS}

The Keller-Segel equations are  a system of second order PDEs describing the joint evolution of the distribution $\nu_{{\color{black}s}}(dx)$ of cells (e.g. bacteria) and the concentration of chemo-attractant $c=c({\color{black}s},x)$, which induces a \textcolor{black}{significant} force field in the cell evolution. In its \textcolor{black}{parabolic-elliptic} form, and assuming that the cell distribution has a density, i.e. $\nu_{{\color{black}s}}(dx) =u({\color{black}s},x)dx$, the equations write as
\begin{align}
&\partial_{{\color{black}s}}u({\color{black}s},x)+\chi\nabla\cdot (u({\color{black}s},x)\nabla c({\color{black}s},x))- \frac 1{2}\triangle u({\color{black}s},x)=0,\notag\\
&-\triangle c({\color{black}s},x)= u({\color{black}s},x),\notag\\
&u(0,x)=u^0(x),\,\,c(0,x)=c^0(x)\,\,\,\text{given}.\label{KS_EQ}
\end{align}
The coefficient $\chi$ modulates the intensity of the action of the concentration. Formally, writing again $c({\color{black}s},x)=(-\Delta)^{-1}u ({\color{black}s},x)$ we have $ c({\color{black}s},x)=\int_{\R^d}\mathfrak P(x-y)u({\color{black}s},y)dy$ so that
$\nabla c({\color{black}s},x)=\int_{\R^d}\nabla \mathfrak P(x-y)u({\color{black}s},y)dy$
$:=(K* u({\color{black}s},\cdot))(x) $ for $K(z)=-z/|z|^d c_d$ (where the constant $c_d$ depends on the considered dimension. This leads to consider a kernel of the form $b(x)= \chi  K (x)$ to derive the corresponding McKean-Vlasov interpretation. The kernel is strongly attractive, and compared to the vortex equations \textcolor{black}{is \textit{not}} divergence-free. In particular this can \textcolor{black}{lead} to blow-up phenomena, i.e. the \textcolor{black}{cells aggregate at 0}, leading to a degeneracy of $\nu_{{\color{black}s}}$ to the Dirac measure $\delta_{\{0\}}$. The way \textcolor{black}{these} blow-up phenomena \textcolor{black}{emerge} is inherently related to the smallness of $u^0$ and $\chi$\textcolor{black}{.} In dimension $2$, it is known, see e.g. the monograph of Biler \cite{bile:19}, that global well-posedness will hold provided $\chi< 8\pi$. The motivation to model diffusion through a (non-local) fractional diffusion comes from the fact that organisms may adopt L\'evy flight search strategies for their nutriment. \textcolor{black}{In that setting},   dispersal is \textcolor{black}{then}  better \textcolor{black}{modeled} by non-local operators (\cite{Escudero-06}, \cite{BouCal-10}).

To show how \eqref{KS_EQ} enters in the framework of the assumptions of Theorems \ref{main_thm_W} and \ref{main_thm_S}, we may proceed as in the case of the vortex equation,  observe first that the kernel $K$ belongs to $L_{{\rm loc }}^{\mathfrak p}$ for $\mathfrak p<\frac d{d-1} $. Hence, setting $b=\chi K\I_{B(0,R)}^*, R>0 $ where as above $\I_{B(0,R)}^* $ stands for a mollification of the indicator function of the ball $B(0,R)$, we get $ b\in L^p$ for $p<d/d-1$.  

We can again, from \eqref{EMBEDDING}, enter the setting of assumption \eqref{cond_coeff_SPKR} for $p\in [1,\frac d{d-1}),q=\infty,\beta=0 $, as soon as
\begin{equation}\label{C_WEAK_KS}
1-\alpha+[d-1-\zeta_0]_+ <0,
\end{equation}
which gives weak well-posedness for $\zeta_0\ge d-1 $ for any $\alpha\in (1,2] $ or as soon as $-\alpha+d<\zeta_0$ for smaller $\zeta_0 $. Thus $\zeta_0>d-\alpha $ gives weak well-posedness. To obtain strong well-posedness, from \eqref{COND_SPKR_STRONG_INTRO}, it is also required to fulfill
$$2-\frac 3{2}\alpha+d<\zeta_0.$$
In particular, in the Brownian regime, the condition for the (local) \textit{strong} uniqueness  reads as
\[
d-1<(\beta_0+\frac{d}{p_0'})(1\wedge \frac{p_0'}{p}). 
\]
This will be fulfilled if e.g. $\beta_0=0 $ (no \textit{a priori} smoothness of the initial data) and $p_0>d\iff \frac{1}{p_0}<\frac 1d\iff \frac{1}{p_0'}>\frac{d-1}{d}\iff p_0'<\frac{d}{d-1} $ as long as $p\le p_0'$ (which can be assumed w.l.o.g. since if $p_0'<\frac{d}{d-1} $, one can always view $b$ as an element of $B_{p_0',\infty}^0 $) or at the other extreme, (imposing no specific integrability properties) provided $\beta_0>d-1$ for $p_0=1$. Observe from \eqref{C_WEAK_KS} that the condition for weak uniqueness is actually weaker and reads $\zeta_0>d-2 $. In particular, for $d=2$, this will be fulfilled as soon as e.g. $\beta_0>0,p_0=1 $ or $\beta_0=0, p_0>1 $.


As for the vortex case, despite the localization of the kernel, the above conditions should be the ones under which \textit{weak and} \textit{strong} well-posedness for the McKean-Vlasov SDE associated with the Keller-Segel system \eqref{KS_EQ} \textcolor{black}{hold}.

\appendix

\appendix
{\color{black}

{\color{black}
\section{A continuity result with mollified coefficients}}

{\color{black}
We recall that we defined from the previous analysis, see \eqref{def_gamma} and \eqref{def_thetas},
\begin{equation*}
\Gamma {\color{black}:= 
\eta\left\{ \alpha-1+\beta-\frac \alpha r + \beta \ind_{\beta >-1}-\frac dp + \zeta_0 \right\},\quad \eta \in (0,1)},
\end{equation*}
and
\begin{equation*}
\theta
\textcolor{black}{=\frac 1\alpha\left\{-\beta + \frac dp - \zeta_0 + \left(\frac{1+\eta}{2\eta}\right) \Gamma\right\}>0}.
\end{equation*}}

\begin{lemme}[Continuity for the supremum of the time normalized Besov norm] 
\label{lem_cont0}
Set, for $\varepsilon >0$,
$$f_t^{{\varepsilon}}:s\in (t,T)\mapsto f_t^{{\varepsilon}}(s):=\sup_{v\in (t,s]}({v}-t)^{\theta} |\brho_{t,\mu}^\varepsilon(v,\cdot)|_{B^{ -\beta+\Gamma}_{p',{\color{black}1}}},$$
with $\theta $ as in Lemma \ref{lem_unifesti_gencase2_RELOADED} (see equation \eqref{def_thetas}).

For every $\varepsilon >0$, 
the map  $s\in (t,T]\mapsto f_t^\varepsilon(s) $ can be extended in $t$ by continuity in $s$ setting $f_t^\varepsilon(t)=0$.
\end{lemme}
\begin{proof}  
Recall first that for every $\varepsilon >0$, $b^\varepsilon$ is a smooth function from $(t,T) \times \R^d \to \R^d$. In particular, the non-linear drift is now a smooth function \textcolor{black}{and} for all $r,\, \gamma,\, \ell,\, m$ in $ [1,\infty] \times \R^+ \times [1,\infty]^2$, by \eqref{YOUNG} and \eqref{EMBEDDING}:
\begin{equation}\label{EXPLO_DRIFT}
\left|\mathcal B_{\brho_{t,\mu}^\varepsilon}^\varepsilon\right|_{L^{r}(B^{\gamma}_{\ell, m})} \le  \left| b^\varepsilon\right|_{L^{r}(B^{\gamma}_{\ell, m})} \left|\brho_{t,\mu}^\varepsilon\right|_{L^\infty(L^1)} \le \textcolor{black}{K_{\varepsilon,r,\gamma,\ell,m}}.
\end{equation}


We will use some heat kernel estimates on the mollified law.
To this end, the point is to use  the representation of the density. The idea underneath is that for all $y\in \R^d $:
\begin{equation}\label{DESINT}
\brho_{t,\mu}^\varepsilon(s,y)=\int_{\R^d}\mu(dx)\tilde \rho_{t,x,\mu}^\varepsilon(s,y),
\end{equation}
where $\tilde \rho_{t,x,\mu}^\varepsilon(s,y) $ stands for the density of the SDE:
\begin{equation}\label{EST_BESOV_NORM}
\tilde X_s^{\varepsilon,t,x,\mu}=x+\int_t^s \mathcal B_{\rho_{t,\mu}^\varepsilon}^{\varepsilon}(v,\tilde X_v^{\varepsilon,t,x,\mu})dv +\mW_s-\mW_t.
\end{equation}
{\color{black}We will actually derive a heat kernel type estimate on $\tilde \rho_{t,x,\mu}^\varepsilon(s,y)$ which somehow guarantees that the same computations performed in Lemma \ref{controle_CI} for the initial condition actually apply for $\brho_{t,\mu}^\varepsilon(s,\cdot)=\mu\star \tilde \rho_{t,x,\mu}^\varepsilon(s,\cdot) $. Namely, we have similarly to  \eqref{HK_PARTIAL_1} in Lemma \ref{controle_CI} that
\begin{align*}
 \Big|\mu \star \tilde \rho_{t,x,\mu}^\varepsilon(s,\cdot)\Big|_{B^{ -\beta+\Gamma}_{p',1}} \le 
 C|\mu|_{B_{p_1,q_1}^{\beta_1}} | \tilde \rho_{t,x,\mu}^\varepsilon(s,\cdot)|_{B_{\mathfrak p(p_1,p'),1}^{-\beta+{\color{black}\gamma}-\beta_1}}.
\end{align*} 
We will hereafter derive that 
\begin{align*}
 \Big|\mu \star \tilde \rho_{t,x,\mu}^\varepsilon(s,\cdot)\Big|_{B^{ -\beta+\Gamma}_{p',1}} \le 
 C_\varepsilon|\mu|_{B_{p_1,q_1}^{\beta_1}} (s-t)^{- \textcolor{black}{\frac{1}{\alpha}\left[\Gamma- \beta + \frac dp - \zeta_0\right]_+}},
\end{align*} 
since $ \tilde \rho_{t,x,\mu}^\varepsilon(s,\cdot)$ satisfies usual heat kernel estimates (with constants depending \textit{a priori} on the mollification parameter).} 

Let us now turn to the derivation  of the heat kernel type bound on $|\tilde \rho_{t,x,\mu}^\varepsilon(s,\cdot)|_{B_{\mathfrak p(p_1,p'),1}^{-\beta+\Gamma-\beta_1}} $ where we recall that
\begin{align*}
[\mathfrak p(p_1,p')]^{-1} &= 1 +  (p')^{-1} - (p_1)^{-1}=\begin{cases}
1+\frac{1}{p'}-\frac{1}{p_0},\ p_1=p_0, \\
1,\ p_1=\min(p_0,p')=p'.
\end{cases}\\
\beta_1&=\beta_0\left( 1\wedge \frac{p_0'}{p}\right). 
 \end{align*}
 
  The point again consists in starting from the Duhamel representation of the density. Similarly to \eqref{main_MOLL} one gets
\begin{eqnarray}\label{main_MOLL_DESINT}
\tilde \rho^\varepsilon_{t,x,\mu}(s,y) &=& p^\alpha_{s-t}(y-x) +\int_t^s dv \Big[\{\mathfrak b_{t,\mu}^\varepsilon(v,\cdot) \tilde \rho^\varepsilon_{t,x,\mu}(v,\cdot)\} \star \nabla p_{s-v}^{\alpha}\Big] (y),\\
\mathfrak b_{t,\mu}^\varepsilon(v,\cdot)&:=&\mathcal B_{\rho_{t,\mu}^\varepsilon}^{\varepsilon}(v,\cdot) \notag,
\end{eqnarray}
where we here exploited the backward Kolmogorov equation associated with the density.

The proof will proceed in two steps. If $ \mathfrak p(p_1,p')>1$ we will first perform a bootstrap argument in the integrability parameter and then another bootstrap type argument concerning the regularity parameter, whereas if ${\color{black}\mathfrak p(p_1,p')=1 }$ only the second step is needed. Hence, w.l.o.g. we handle the case $ \mathfrak p(p_1,p')>1 $.

Take now $\ell_0$ s.t. $1/\alpha+d/( \alpha \ell_0)<1 \iff \ell_0>d/(\alpha-1)  $. Then for any $\gamma>0 $ s.t.  $1/\alpha+d/ (\alpha \ell_0)+\gamma/\alpha<1$, from \eqref{SING_STABLE_HK}, \eqref{YOUNG} and \eqref{EMBEDDING}, we derive:
\begin{align*}
\|\tilde \rho^\varepsilon_{t,x,\mu}(s,\cdot)\|_{B_{\ell_0',1}^\gamma} &\le \|p^\alpha_{s-t}(\cdot-x)\|_{_{B_{\ell_0',1}^\gamma}} +\int_t^s dv \Big|\Big[\{\mathfrak b_{t,\mu}^\varepsilon(v,\cdot) \tilde \rho^\varepsilon_{t,x,\mu}(v,\cdot)\} \star \nabla p_{s-v}^{\alpha}\Big] \Big|_{_{B_{\ell_0',1}^\gamma}}\\
&\le C(s-t)^{-\frac 1\alpha(\gamma+\frac{d}{\ell_0})}+\int_t^s dv \Big|\{\mathfrak b_{t,\mu}^\varepsilon(v,\cdot) \tilde \rho^\varepsilon_{t,x,\mu}(v,\cdot)\}\Big|_{B_{1,\infty}^0} |\nabla p_{s-v}^{\alpha} \Big|_{_{B_{\ell_0',1}^\gamma}}\\
 &\le C(s-t)^{-\frac 1\alpha(\gamma+\frac{d}{\ell_0})}+\int_t^s dv \Big|\mathfrak b_{t,\mu}^\varepsilon(v,\cdot)\Big|_{L^\infty} |\tilde \rho^\varepsilon_{t,x,\mu}(v,\cdot)|_{L^1} (v-t)^{-\frac 1\alpha(1+\gamma+\frac{d}{\ell_0})}\\
 &\le C(s-t)^{-\frac 1\alpha(\gamma+\frac{d}{\ell_0})}+C_\varepsilon(s-t)^{\frac{\alpha-(1+\gamma+\frac{d}{\ell_0})}{\alpha}}\le C_\varepsilon(s-t)^{-\frac 1\alpha(\gamma+\frac{d}{\ell_0})},
\end{align*}
where we also used \eqref{EXPLO_DRIFT} for the last but one inequality. 
If $\mathfrak p(p_1,p')'>d/(\alpha-1) $ we can therefore get the required integrability parameter as well as some residual regularity $ \gamma>0$ smaller than the remaining margin. 
If now $\mathfrak p(p_1,p')'<\ell_0 $, some integrability gain is still to be obtained. For some $\delta>0$ to specify we now want to exploit the former control to obtain a bound on the $B_{\ell_0'+\delta,1}^\gamma $ norm of the density. Precisely, 
\begin{align*}
\|\tilde \rho^\varepsilon_{t,x,\mu}(s,\cdot)\|_{B_{\ell_0'+\delta,1}^\gamma} &\le \|p^\alpha_{s-t}(\cdot-x)\|_{_{B_{\ell_0'+\delta,1}^\gamma}} +\int_t^s dv \Big|\Big[\{\mathfrak b_{t,\mu}^\varepsilon(v,\cdot) \tilde \rho^\varepsilon_{t,x,\mu}(v,\cdot)\} \star \nabla p_{s-v}^{\alpha}\Big] \Big|_{_{B_{\ell_0'+\delta,1}^\gamma}}\\
&\le C(s-t)^{-\frac 1\alpha(\gamma+\frac{d}{(\ell_0'+\delta)'})}+\int_t^s dv \Big|\{\mathfrak b_{t,\mu}^\varepsilon(v,\cdot) \tilde \rho^\varepsilon_{t,x,\mu}(v,\cdot)\}\Big|_{B_{\ell_0',1}^\gamma} |\nabla p_{s-v}^{\alpha} \Big|_{_{B_{r(\delta),1}^0}},
\end{align*}
where $(\ell_0'+\delta)' $ denotes the conjugate exponent of $\ell_0'+\delta $ and from \eqref{YOUNG}, $1+\frac{1}{\ell_0'+\delta}=\frac{1}{\ell_0'}+\frac {1}{r(\delta)} \iff \frac{1}{r(\delta)}=1+\frac{1}{\ell_0'+\delta}-\frac{1}{\ell_0'}=1-\frac{\delta}{\ell_0'(\ell_0'+\delta)}\iff \frac{1}{(r(\delta))'}=\frac{\delta}{\ell_0'(\ell_0'+\delta)}$. Hence to keep the above integral finite one needs to take $\delta >0$ s.t. $\frac{1}{\alpha}+\frac{d}{\alpha (r(\delta))'}<1\iff (r(\delta))'>\frac{d}{\alpha-1}\iff \frac{\ell_0'(\ell_0'+\delta)}{\delta}>\frac{d}{\alpha-1}\iff \delta \Big(\frac d{\alpha-1} -\ell_0'\Big) <(\ell_0')^2$. For such a $\delta>0 $ it then holds from \eqref{EXPLO_DRIFT} that
\begin{align*}
\|\tilde \rho^\varepsilon_{t,x,\mu}(s,\cdot)\|_{B_{p_0'+\delta,1}^\gamma} 
&\le C(s-t)^{-\frac 1\alpha(\gamma+\frac{d}{(\ell_0'+\delta)'})}+C_{\varepsilon}(s-t)^{1-(\frac{\gamma}{\alpha}+\frac{d}{\alpha \ell_0})-(\frac{1}{\alpha}+\frac{d}{\alpha (r(\delta))'})}\\
&\le C_{\varepsilon,1}(s-t)^{-\frac 1\alpha(\gamma+\frac{d}{(\ell_0'+\delta)'})}.
\end{align*}
Indeed $ (\alpha-1-\frac{d}{ (r(\delta))'})-\frac{d}{ \ell_0}+\frac{d}{(\ell_0'+\delta)'}\ge 0\iff \alpha\ge 1$. It can then be seen by induction that one obtains for $\delta$ small enough and s.t. $\ell_0'+n\delta=\mathfrak p(p_1,p') $, for all $j\in \{ 1,\cdots,n\} $:
\begin{align}
\|\tilde \rho^\varepsilon_{t,x,\mu}(s,\cdot)\|_{B_{\ell_0'+j\delta,1}^\gamma} &\le \|p^\alpha_{s-t}(\cdot-x)\|_{_{B_{\ell_0'+j\delta,1}^\gamma}} +\int_t^s dv \Big|\Big[\{\mathfrak b_{t,\mu}^\varepsilon(v,\cdot) \tilde \rho^\varepsilon_{t,x,\mu}(v,\cdot)\} \star \nabla p_{s-v}^{\alpha}\Big] \Big|_{_{B_{\ell_0'+j\delta,1}^\gamma}}\notag\\
&\le C(s-t)^{-\frac 1\alpha(\gamma+\frac{d}{(\ell_0'+j\delta)'})}+\int_t^s dv \Big|\{\mathfrak b_{t,\mu}^\varepsilon(v,\cdot) \tilde \rho^\varepsilon_{t,x,\mu}(v,\cdot)\}\Big|_{B_{\ell_0'+(j-1)\delta,1}^\gamma} |\nabla p_{s-v}^{\alpha} \Big|_{_{B_{r_j(\delta),1}^0}},
\label{THE_PREAL_APRIORI_HK}
\end{align}
where $\frac{1}{r_j(\delta)}=1+\frac{1}{\ell_0'+j\delta}-\frac{1}{p_0'+(j-1)\delta}=1-\frac{\delta}{(\ell_0'+(j-1)\delta)(\ell_0'+j\delta)}\iff  \frac{1}{r_j(\delta)'}=\frac{\delta}{(\ell_0'+(j-1)\delta)(\ell_0'+j\delta)}$. Hence, there are no integrability issues, since $\frac{1}{r_j(\delta)'} $ decreases with $j$. This gives:
\begin{align*}
\|\tilde \rho^\varepsilon_{t,x,\mu}(s,\cdot)\|_{B_{\ell_0'+j\delta,1}^\gamma} 
&\le C(s-t)^{-\frac 1\alpha(\gamma+\frac{d}{(\ell_0'+j\delta)'})}+C_{\varepsilon,j-1}(s-t)^{1-(\frac{\gamma}{\alpha}+\frac{d}{\alpha (\ell_0'+(j-1)\delta)'})-(\frac{1}{\alpha}+\frac{d}{\alpha (r_j(\delta))'})}\\
&\le C_{\varepsilon,j}(s-t)^{-\frac 1\alpha(\gamma+\frac{d}{(\ell_0'+j\delta)'})},
\end{align*}
since again 
$(\alpha-1-\frac{d}{ (r_j(\delta))'})-\frac{d}{ (\ell_0'+(j-1)\delta )'}+\frac{d}{(\ell_0'+j\delta)'}\ge 0$.
Thus, the estimate with the right integrability index follows from \eqref{THE_PREAL_APRIORI_HK} taking $j=n$.

To obtain the required estimate, namely a control on $|\brho_{t,\mu}^\varepsilon(s,\cdot)|_{B_{\mathfrak p(p_1,p'),1}^{-\beta+\Gamma-\beta_1}}$, it suffices to iterate the previous bootstrap type approach on the regularity parameter. Namely, for a small parameter $\gamma_0>0$, and $j\in \mathbb N^* $, write:
\begin{align}
\|\tilde \rho^\varepsilon_{t,x,\mu}(s,\cdot)\|_{B_{\mathfrak p(p_1,p'),1}^{\gamma+j \gamma_0}} &\le \|p^\alpha_{s-t}(\cdot-x)\|_{_{B_{\mathfrak p(p_1,p'),1}^{\gamma+j\gamma_0}}} +\int_t^s dv \Big|\Big[\{\mathfrak b_{t,\mu}^\varepsilon(v,\cdot) \tilde \rho^\varepsilon_{t,x,\mu}(v,\cdot)\} \star \nabla p_{s-v}^{\alpha}\Big] \Big|_{_{B_{\mathfrak p(p_1,p'),1}^{\gamma+j\gamma_0}}}\notag\\
&\le C(s-t)^{-\frac 1\alpha(\gamma+ j\gamma_0+\frac{d}{\mathfrak p(p_1,p')'})}+\int_t^s dv \Big|\{\mathfrak b_{t,\mu}^\varepsilon(v,\cdot) \tilde \rho^\varepsilon_{t,x,\mu}(v,\cdot)\}\Big|_{B_{\mathfrak p(p_1,p'),1}^{\gamma+(j-1)\gamma_0}} |\nabla p_{s-v}^{\alpha}|_{B_{1,1}^{\gamma_0}},
\label{THE_PREAL_APRIORI_HK}
\end{align}
provided $(\gamma+(j-1)\gamma_0+\frac d{\mathfrak p(p_1,p')'})<\alpha $ and $\gamma_0<\alpha-1 $. The point is then to choose $\gamma_0  $ small enough and $m\in \mathbb N $ s.t. $\gamma+\gamma_0 m
=-\beta+\Gamma-\beta_1$, observing that $ -\beta+\Gamma-\beta_1+\frac{d}{\mathfrak p(p_1,p')'}<\alpha$ under the considered assumptions. 
Hence, there are feasible parameters for which the procedure yields 
 \begin{equation}\label{HK_PARTIAL_DENS}
 |\tilde \rho^\varepsilon_{t,x,\mu}(s,\cdot)|_{B_{\mathfrak p(p_1,p'),1}^{-\beta+\Gamma-\beta_1}}\le C_\varepsilon(s-t)^{-\frac 1\alpha(-\beta+\Gamma+\frac dp-\zeta_0)},
  \end{equation}
  which in turn gives, recalling \eqref{DESINT} and similarly to Lemma \ref{controle_CI}
  $$|\brho_{t,\mu}^\varepsilon(s,\cdot)|_{B^{ -\beta+\Gamma}_{p',{\color{black}1}}}\le  C_\varepsilon(s-t)^{-\frac 1\alpha(-\beta+\Gamma+\frac dp-\zeta_0)}.$$
The statement then follows multiplying by $(s-t)^{\theta} $.

\end{proof}
}

{\color{black}\section{About the extension to global well-posedness}}
\label{EXT_TO_LONG_TIME}

We here aim at giving some elements that would allow to extend the previous results {\color{black}stated in Section \ref{ESTI_FOR_FK}} to an arbitrary time horizon, provided the initial norm is sufficiently small in an appropriate norm. To this end we need to introduce a slightly different class of weighted spaces. Namely, two normalizing scales appear. One for the \textit{short time} (as above) and another one for the \textit{long time}.

\paragraph{Extension of Weighted Lebesgue-Besov spaces} 
Introduce for 
$\sc, \ell,m\in [1,+\infty], \gamma, \lambda_1,\lambda_2\in \R $, {\color{black}$t\le S\le T$,}
\begin{align*}
&L_{\w_{\lambda_1}^{\lambda_2}}^{\sc}((t,S],B_{\ell,m}^\gamma):=\Bigg\{f:s\in[t,S]\mapsto f(s,\cdot)\in B^{ \gamma}_{ \ell, m}\,\text{measurable, s.t.}\, \int_t^S  | f(s,\cdot)|_{B^{ \gamma}_{ \ell, m}}^{\textcolor{black}{\sc}} {\w_{\lambda_1}^{\lambda_2}}_{}(s-t)ds<+\infty \Bigg\},
\end{align*}
if $\sc<+\infty $ and
\begin{align}
&L_{{\text w}_{\lambda_1}^{\lambda_2}}^\infty((t,S],B_{\ell,m}^\gamma)\notag\\
:=&\Bigg\{f:s\in[t,S]\mapsto f(s,\cdot)\in B^{ \gamma}_{ \ell, m}\,\text{measurable, s.t.}\, \text{ess sup}_{s\in(t,S]}  \Big({\w}_{\lambda_1}^{\lambda_2}(s-t)| f(s,\cdot)|_{B^{ \gamma}_{ \ell, m}}\Big)<+\infty \Bigg\},
\end{align}
where the weight function ${\w}_{\lambda_1}^{\lambda_2}$ is given by 
\begin{equation}
\label{DEF_WEIGHT}\tag{\textbf{W}}
{\w}_{\lambda_1}^{\lambda_2}(s-t)=[(s-t)^{\lambda_1} \wedge 1] \times [(s-t)^{\lambda_2} \vee 1].
\end{equation}
In other words, the subscript of the weight corresponds to exponent of the \textit{short time renormalization} while the superscript stands for the exponent related to the \textit{long time renormalization}.

Endowed with the metric
\begin{equation*}
|f|_{L_{{\w}_{\lambda_1}^{\lambda_2}}^{\textcolor{black}{\sc}}((t,S],B^\gamma_{\ell,m})}=\Bigg( \int_t^S ds  | f(s,\cdot)|_{B^{\gamma}_{\ell,m}}^{\textcolor{black}{\sc}}{\w}_{\lambda_1}^{\lambda_2}(s-t)\Bigg)^{\frac 1{\textcolor{black}{\sc}}} ,
\end{equation*}
 with the usual modification if $ \textcolor{black}{\sc}=+\infty $, the normed space $(L_{{\w}_{\lambda_1}^{\lambda_2}}^{\textcolor{black}{\sc}}((t,S],B_{\ell,m}^\gamma, \w_{\lambda_1}^{\lambda_2}),|\cdot|_{L_{{\w}_{\lambda_1}^{\lambda_2}}^{\textcolor{black}{\sc}}((t,S],B^\gamma_{\ell,m})})$ is also a Banach space (see e.g. again \cite[Chapter 1]{HyNeVeWe-16}).
 
 We are going to give some key controls and conditions to derive the following extension of Lemmas \ref{lem_unifesti_gencase2_RELOADED}
and \eqref{lem_quad_gronv_C1} which are really the key to derive the previous results established in small time. Namely, introducing the condition:
 \begin{align}\label{LT}\tag{\textbf{LT}}
\textcolor{black}{1-\alpha + \frac \alpha r + \frac dp \ge 0,\ \textcolor{black}{1<}\alpha(1-\frac 1r)<2}{\color{black},}
\end{align}

 it holds that
 \begin{lemme}[A priori estimates on the mollified density, long time regime]\label{lem_unifesti_gencase2_RELOADED_LT} 
Assume $T-t\ge 1$ and that 
\textbf{(C1)} or \textbf{(C2)},  \eqref{LT} hold. Then, defining 
\begin{equation}\label{def_thetas_LT}
\theta_1 := \textcolor{black}{\frac 1\alpha}\left\{-\beta + \frac dp - \zeta_0 + \left(\frac{1+\eta}{2\eta}\right) \Gamma\right\}, \quad \theta_2 := \frac 1{r'} - \frac 1\alpha,
\end{equation}
there exists $C := C(\Theta)>0$ such that for all ${S \le T}$,
\begin{eqnarray}
&&\sup_{s\in (t,S]}\Big\{ {\rm w}_{\theta_1}^{\theta_2}(s-t) |\brho_{t,\mu}^\varepsilon(s,\cdot)|_{B_{p',1}^{-\beta+ \Gamma}}\Big\} \le  \textcolor{black}{C\Bigg\{} \textcolor{black}{|\mu|_{B^{\beta_1}_{p_1,q_1}}}  {\rm w}_{\frac{1-\eta}{2\eta} \frac \Gamma{\alpha}}^{0}(S-t)\notag
\\
&&\qquad  +\left(|b|_{L^{r}(B^{\beta}_{p,q})} + |\div(b)|_{L^{r}(B^{\beta}_{p,q})}\ind_{\beta =-1} \right)  {\rm w}_{\frac{1-\eta}{2\eta}\frac \Gamma{\alpha}}^{0}(S-t)\bigg(\sup_{s\in (t,S]}\Big\{ \w_{\theta_1}^{\theta_2}(s-t) |\brho_{t,\mu}^\varepsilon(s,\cdot)|_{B_{p',1}^{-\beta+ \Gamma}}\Big\} \bigg)^2\textcolor{black}{\Bigg\}},\notag
\end{eqnarray}
\textcolor{black}  {for $\Gamma $ as in \eqref{def_gamma} and
${\color{black}{\rm w}^{\lambda_2}_{\lambda_1}}$ defined in \eqref{DEF_WEIGHT}}.
\end{lemme}

\begin{lemme}[A priori control through a Gronwall type inequality with quadratic growth]\label{lem_quad_gronv_C1_LT} Under \textbf{(C1)} or \textbf{(C2)} and \eqref{LT}, 
and for $\theta_1,\theta_2, \Gamma$ as in Lemma \ref{lem_unifesti_gencase2_RELOADED_LT} and Theorem \ref{main_thm_W}, there exist
 $\mathcal C_0:=\textcolor{black}{\mathcal C_0(\Theta)}>0$ and $C_{\ref{lem_quad_gronv_C1_LT}} $ such that   for any $T>0$,  $S  \le T$, uniformly in $\varepsilon>0 $, 
 \begin{eqnarray}\label{Unifestim_quadrgronwall_C1_LT}
 \sup_{s\in (t,S]}[ \w_{\theta_1}^{\theta_2}(s-t) |\brho_{t,\mu}^\varepsilon(s,\cdot)|_{B^{ \textcolor{black}{-\beta}+\Gamma}_{p',{\color{black}1}}}]
\le 
C_{\ref{lem_quad_gronv_C1_LT}},
\end{eqnarray} 
whenever $|\mu|_{\textcolor{black}{B_{p_1,q_1}^{\beta_1}}} \le  \mathcal C_0$. If now  $|\mu|_{\textcolor{black}{B_{p_1,q_1}^{\beta_1}}} >  \mathcal C_0$, there exists $\mathcal T_2 := \mathcal T_2\big(\Theta\big)>t$ such that, for all $S \le \mathcal T_2$, estimate \eqref{Unifestim_quadrgronwall_C1_LT} holds.

\end{lemme}

 To prove the above results, the starting point is again the Duhamel representation \eqref{TRIANG_KR}. To handle the convolution of the initial condition with the heat semi-group we first need t{\color{black}o} extend Lemma \ref{controle_CI}. 
\begin{lemme}[Besov controls for the convolution of the initial condition and the stable heat kernel in possibly long time]\label{controle_CI_LT}
Recall that 
\begin{equation*}
 p_1=\min(p_0,p'),\ q_1=q_0(1\vee \frac{p}{p_0'}),\ \beta_1=\beta_0(1\wedge \frac{p_0'}p).
\end{equation*}
Then, for any $s \ge t$ it holds that for any $\Gamma>0$:
 \begin{align}
 \Big|\mu \star p^\alpha_{s-t}\Big|_{B^{ -\beta+\Gamma}_{p',1}} \le 
 C|\mu|_{B_{p_1,q_1}^{\beta_1}} [(s-t)\wedge 1]^{- \frac{1}{\alpha}\textcolor{black}{\left[\Gamma - \beta + \frac dp - \zeta_0\right]_+}}
 [(s-t)\vee 1]^{- \frac{d}{\alpha p}}
 ,
 \label{NEW_CTR_CI_LT}
\end{align} 
where $\zeta_0=\left(\beta_0 + \frac{d}{p_0'}\right)\left( 1 \wedge \frac{p_0'}{p} \right)$ is the \textit{regularity gain from the initial condition} defined in \eqref{def_zetra}.
\end{lemme}

 \begin{proof}
To establish \eqref{NEW_CTR_CI_LT}, it only remains to  handle the long time estimate, \textit{i.e.} we assume that $s-t >1$. In such case, using  \eqref{lem_proba_in besov} we write from \eqref{YOUNG} and then \eqref{SING_STABLE_HK}
 \begin{eqnarray}
\Big|\mu \star p^\alpha_{s-t}\Big|_{B^{ -\beta+\Gamma}_{p',1}} &\le& \cv|\mu|_{B^{0}_{1,\infty}} | p^\alpha_{s-t}|_{\textcolor{black}{B^{ -\beta+\Gamma}_{p', 1}}} \le C \cv|\mu|_{B^{0}_{1,\infty}}  [(s-t)\vee 1]^{- \frac{d}{\alpha p}},\quad s >t+1. \label{NEW_CTR_CI_LT_BIS}
\end{eqnarray}

Recalling now from \eqref{def_zetra} that $\zeta_0 = \left(\beta_0 + d/(p_0')\right)\left( 1 \wedge [p_0'/p] \right),$ we conclude from \eqref{NEW_CTR_CI_ST} and \eqref{NEW_CTR_CI_LT_BIS} that for any $s>t$,
 \begin{align*}
 \Big|\mu \star p^\alpha_{s-t}\Big|_{B^{ -\beta+\Gamma}_{p',1}} \le 
 C|\mu|_{B_{p_1,q_1}^{\beta_1}} [(s-t)\wedge 1]^{- \frac{1}{\alpha}\left[\Gamma - \beta + \frac dp - \zeta_0\right]}[(s-t)\vee 1]^{- \frac{d}{\alpha p}}.
\end{align*} 
\textit{i.e.} \eqref{NEW_CTR_CI} holds. This concludes the proof.
\end{proof}

\begin{proof}[Proof of Lemma \ref{lem_unifesti_gencase2_RELOADED_LT}.]
Now, similarly to the proof of Lemma \ref{lem_unifesti_gencase2_RELOADED} we get under \eqref{cond_coeff_SPKR}:
\begin{eqnarray}
|\brho_{t,\mu}^\varepsilon(s,\cdot)|_{B^{-\beta+ \Gamma}_{p',1}} &\le&  \textcolor{black}{C \Bigg\{} \textcolor{black}{|\mu|_{B^{\beta_1}_{p_1,q_1}}}  [(s-t)\wedge 1]^{- \frac{1}{\alpha}\left[\Gamma - \beta + \frac dp - \zeta_0\right]}[(s-t)\vee 1]^{- \frac{d}{p\alpha}}\notag\\
&& +|b|_{L^{r}(B^{\beta}_{p,q})}  \Bigg(\int_t^s \frac{dv}{[(s-v)\wedge 1]^{\frac {-\beta+1}\alpha r'}[(s-v)\vee 1]^{\frac {r'}\alpha}}  | \brho_{t,\mu}^\varepsilon(v,\cdot)|_{B^{-\beta+\Gamma}_{p', 1}}^{2r'} \Bigg)^{\frac 1{r'}}\textcolor{black}{\Bigg\}}\label{CI_EPS_AVANT_GR_QUADRA_PROOF_2_LT},
\end{eqnarray}
where we have used the heat kernel controls \eqref{SING_STABLE_HK} in order to take into account the long time behavior.

In order to have time integrable singularities in the former integral we assume that:
\begin{equation}\label{C1ST_a_LT}\tag{a${}_1$}
\frac 1{r'}-\frac{1-\beta}\alpha >0.
\end{equation}
\textcolor{black}{We now introduce two parameters $\theta_1$ and $\theta_2$, meant to be non negative, which we are going to calibrate.} 
With the notations of  \eqref{DEF_WEIGHT},
we write from \eqref{CI_EPS_AVANT_GR_QUADRA_PROOF_2_LT}:
\begin{eqnarray}
&&\sup_{s\in (t,S]}\Big\{ \w_{\theta_1}^{\theta_2}(s-t) |\brho_{t,\mu}^\varepsilon(s,\cdot)|_{B_{p',1}^{-\beta+ \Gamma}}\Big\} \le \textcolor{black}{C\Bigg\{} \textcolor{black}{|\mu|_{B^{\beta_1}_{p_1,q_1}}} \max_{s\in(t,S]} \Big\{\w_{\theta_1- \frac{1}{\alpha}\left[\Gamma - \beta + \frac dp - \zeta_0\right]}^{\theta_2- \frac{d}{p\alpha}}(s-t)\Big\} \label{CI_EPS_AVANT_GR_QUADRA_PROOF_2_TER}\\
&&\qquad  +|b|_{L^{r}(B^{\beta}_{p,q})}  \max_{s\in(t,S]}\Bigg\{\w_{\theta_1}^{\theta_2}(s-t)\Bigg(\int_t^s \frac{dv}{\w_{\frac{-\beta+1}{\alpha} r'}^{\frac{r'}{\alpha}}(s-v)} | \brho_{t,\mu}^\varepsilon(v,\cdot)|_{B^{ -\beta+\Gamma}_{p', 1}}^{2r'}  \Bigg)^{\frac 1{r'}}\Bigg\}\textcolor{black}{\Bigg\}}.\notag
\end{eqnarray}
To obtain homogeneous quantities, we now have to singularize the integrand in the above r.h.s. \textcolor{black}{To do so, we need the exponents $\theta_1$ to be so that $(v-t)^{-2\theta_1 r'}$ is integrable around $t$. For technical reasons ({\color{black}relative to the }application of Lemma \ref{lem_gestion_sing}) we suppose for $i=1,2$ that}, 
\begin{equation}\label{C1ST_b_LT}\tag{b${}_1 $}
\theta_i < \frac 1{2r'},\quad i=1,2.
\end{equation}
We now get
\begin{eqnarray*}
&&\int_t^s \frac{dv}{\w_{\frac{-\beta+1}{\alpha} r'}^{\frac{r'}{\alpha}}(s-v)}  | \brho_{t,\mu}^\varepsilon(v,\cdot)|_{B^{ -\beta+\Gamma}_{p', 1}}^{2r'}\\
&=&\int_t^s \frac{dv}{\w_{\frac{-\beta+1}{\alpha} r'}^{\frac{r'}{\alpha}}(s-v) \,  \w_{2\theta_1 r'}^{2\theta_2r'}(v-t)} \Big([\w_{\theta_1}^{\theta_2}(v-t)] | \brho_{t,\mu}^\varepsilon(v,\cdot)|_{B^{ -\beta+\Gamma}_{p', 1}}\Big)^{2r'}\\
&\le& \sup_{v\in (t,s]}\Big\{\w_{2\theta_1 r'}^{2\theta_2r'}(v-t)|\brho_{t,\mu}^\varepsilon(v,\cdot)|^{2r'}_{B^{-\beta+ \Gamma}_{p',1}}\Big\} \int_t^s \frac{dv}{\w_{\frac{-\beta+1}{\alpha} r'}^{\frac{r'}{\alpha}}(s-v) \,  \w_{2\theta_1 r'}^{2\theta_2r'}(v-t)}.
\end{eqnarray*}
\textcolor{black}{Under the conditions \eqref{C1ST_a_LT} and \eqref{C1ST_b_LT} we have assumed, we are in position  to apply Lemma  \ref{lem_gestion_sing} below to integrate the time singularities. This yields:}
\begin{eqnarray*}
&&\int_t^s \frac{dv}{\w_{\frac{-\beta+1}{\alpha} r'}^{\frac{r'}{\alpha}}(s-v)}  | \brho_{t,\mu}^\varepsilon(v,\cdot)|_{B^{ -\beta+\Gamma}_{p', 1}}^{2r'}\\
&\le& \sup_{v\in [t,s)}\Big\{ \big[\w_{\theta_1}^{\theta_2}(s-t)\big]^{2r'} |\brho_{t,\mu}^\varepsilon(v,\cdot)|^{2r'}_{B^{-\beta+\Gamma}_{p',1}}\Big\}[(s-t)\wedge1]^{1-\frac{1-\beta}{\alpha}r'-2\theta_1 r'} [(s-t)\vee 1]^{1-\frac{1}{\alpha}r'-2 \theta_2r'}\\
&=&\sup_{v\in [t,s)}\Big\{ \big[\w_{\theta_1}^{\theta_2}(s-t)\big]^{2r'} |\brho_{t,\mu}^\varepsilon(v,\cdot)|^{2r'}_{B^{-\beta+\Gamma}_{p',1}}\Big\}\bigg(\w_{\frac 1{r'}-\frac{1-\beta}{\alpha}-2\theta_1}^{\frac 1{r'}-\frac{1}{\alpha}-2\theta_2}(s-t)\bigg)^{r'}.
\end{eqnarray*}

Plugging the above estimate \textcolor{black}{into} \eqref{CI_EPS_AVANT_GR_QUADRA_PROOF_2_TER} yields
\begin{eqnarray}
&&\sup_{s\in (t,S]}\Big\{ \w_{\theta_1}^{\theta_2}(s-t) |\brho_{t,\mu}^\varepsilon(s,\cdot)|_{B_{p',1}^{-\beta+ \Gamma}}\Big\} \le  \textcolor{black}{C\Bigg\{} \textcolor{black}{|\mu|_{B^{\beta_1}_{p_1,q_1}}} \sup_{s\in (t,S]} \Big\{\w_{\theta_1- \frac{1}{\alpha}\left[\Gamma - \beta + \frac dp - \zeta_0\right]}^{\theta_2- \frac{d}{p\alpha}}(s-t)\Big\} \label{AVANT_EQUILIBRE_BIS}\\
&&\qquad  +|b|_{L^{r}(B^{\beta}_{p,q})}   \sup_{s\in (t,S]} \left\{\w_{\frac 1{r'}-\frac{1-\beta}{\alpha}-\theta_1}^{\frac 1{r'}-\frac{1}{\alpha}-\theta_2}(s-t)\right\}\bigg(\sup_{s\in (t,S]}\Big\{ \w_{\theta_1}^{\theta_2}(s-t) |\brho_{t,\mu}^\varepsilon(s,\cdot)|_{B_{p',1}^{-\beta+ \Gamma}}\Big\} \bigg)^2\textcolor{black}{\Bigg\}}.\notag
\end{eqnarray}
Our objective now consists in: (i) equilibrating and removing the singularities of the initial condition, $[\Gamma - \beta +  d/p - \zeta_0]/\alpha$, and of the integral term, $1/r'-(1-\beta)/\alpha$, in the above in \textit{small time}; (ii) removing the time dependence in \textit{large time}. 

For the \textit{small time regime} (i), 
the previous choice of $\theta_1=\theta $ in Lemma \ref{lem_unifesti_gencase2_RELOADED} fits. Namely, we can take:
\begin{equation*}
\Gamma = \eta \left\{\alpha-1+ 2\beta - \frac \alpha r - \frac dp + \zeta_0 \right\},\quad \alpha \theta = \left\{-\beta + \frac dp -\zeta_0 \right\} + \frac 1{2\eta}\left(1+\eta\right) \Gamma.
\end{equation*}
Concerning the \textit{large time regime} (ii), we want to remove the time dependence in \eqref{AVANT_EQUILIBRE_BIS}. The conditions can be summarized as:
$$\exists \theta_2 \ge 0 \text{ s.t. }\eqref{C1ST_a_LT}:\, \frac 1{r'}- \frac{1-\beta}\alpha >0,\quad \eqref{C1ST_b_LT}:\, \theta_2 < \frac{1}{2r'}\, \text{ and }\frac d{p\alpha} \ge \theta_2 \ge \frac 1{r'}- \frac{1}\alpha.$$
Note that the third condition acts precisely the other way than the condition of the \textit{small time regime} \eqref{cond_coeff_SPKR} and is reminiscent from the integrability of $v \mapsto v^{-\zeta},\, \zeta>0$ on $\R^+$. {\color{black}This third condition} is non-empty thanks to the second condition in \eqref{LT}. This suggests to choose $\theta_2$ as 
\begin{equation}\label{def_theta2}
\theta_2 := \frac 1{r'}- \frac1\alpha.
\end{equation}
As such, $\eqref{LT} \Rightarrow \alpha(1-1/r) < 2 \Rightarrow \eqref{C1ST_b_LT}$,  $\eqref{LT} \Rightarrow  \eqref{C1ST_a_LT}$. We also emphasize that the condition \eqref{C1ST_b_LT} excludes the case $(\alpha,r) = (2,\infty)$.\\

We can now plug the parameters $\Gamma$, $\theta_1$ and $\theta_2$ we chose in \eqref{AVANT_EQUILIBRE_BIS} to obtain the following estimate:
\begin{eqnarray}
&&\sup_{s\in (t,S]}\Big\{ \w_{\theta_1}^{\theta_2}(s-t) |\brho_{t,\mu}^\varepsilon(s,\cdot)|_{B_{p',1}^{-\beta+ \Gamma}}\Big\} \le  \textcolor{black}{C\Bigg\{} \textcolor{black}{|\mu|_{B^{\beta_1}_{p_1,q_1}}} \sup_{s\in (t,S]} \Big\{\w_{\frac{1-\eta}{2\eta}\frac \Gamma{\alpha}}^{ \frac 1{r'} - \frac 1\alpha - \frac d{p\alpha}}(s-t)\Big\} \notag 
\\
&&\qquad  +C_1|b|_{L^{r}(B^{\beta}_{p,q})}   \sup_{s\in (t,S]} \left\{\w_{\frac{1-\eta}{2\eta}\frac \Gamma{\alpha}}^{ 0}(s-t)\right\}\bigg(\sup_{s\in (t,S]}\Big\{ \w_{\theta_1}^{\theta_2}(s-t) |\brho_{t,\mu}^\varepsilon(s,\cdot)|_{B_{p',1}^{-\beta+ \Gamma}}\Big\} \bigg)^2\textcolor{black}{\Bigg\}}.\notag
\end{eqnarray}
From \eqref{LT}, we have 
\begin{eqnarray}
&&\sup_{s\in (t,S]}\Big\{ \w_{\theta_1}^{\theta_2}(s-t) |\brho_{t,\mu}^\varepsilon(s,\cdot)|_{B_{p',1}^{-\beta+ \Gamma}}\Big\} \le  \textcolor{black}{C\Bigg\{} \textcolor{black}{|\mu|_{B^{\beta_1}_{p_1,q_1}}}  \w_{\frac{1-\eta}{2\eta} \frac \Gamma{\alpha}}^{0}(S-t)\notag 
\\
&&\qquad  +C_1|b|_{L^{r}(B^{\beta}_{p,q})}   \w_{\frac{1-\eta}{2\eta}\frac \Gamma{\alpha}}^{0}(S-t)\bigg(\sup_{s\in (t,S]}\Big\{ \w_{\theta_1}^{\theta_2}(s-t) |\brho_{t,\mu}^\varepsilon(s,\cdot)|_{B_{p',1}^{-\beta+ \Gamma}}\Big\} \bigg)^2\textcolor{black}{\Bigg\}}.\notag
\end{eqnarray}
This concludes the proof under \A{C1}.\\

Let us now restart from the Duhamel formulation \eqref{main_MOLL} under \A{C2}. We precisely rebalance the gradient through an integration by parts to alleviate the time singularity on the heat kernel, \textcolor{black}{when the integration variable is \textit{close} to the time upper bound, using to this end the structural condition on the drift in  \A{C2}. 
{\color{black}This strategy applies} in \textit{short time}, if e.g. $s-t\le1$}.

\textcolor{black}{On the other hand, if $s-t>1$, we will actually reproduce the computations performed in \eqref{first_ctr_lem6_c1} putting therein $\beta=-1 $ on the time integration interval $[t,s-1]$}. 
Assuming w.l.o.g. that $ s-t>1$, we get,
\begin{eqnarray}
|\brho_{t,\mu}^\varepsilon(s,\cdot)|_{B^{1+\Gamma}_{p',1}} &\le&   |\mu \star p^\alpha_{s-t}|_{B^{1+\Gamma}_{p',1}}+\int_t^{s-1} dv\Big|\mathcal B_{\brho_{t,\mu}^\varepsilon}^\varepsilon(v,\cdot) \brho_{t,\mu}^\varepsilon(v,\cdot)\Big) \star  \nabla p^{\alpha}_{s-v}\Big|_{B^{1+\Gamma}_{p',1}} \notag\\
&& + \int_{s-1}^s dv\Big|\Big(\div\big(\mathcal B_{\brho_{t,\mu}^\varepsilon}^\varepsilon(v,\cdot)\big) \brho_{t,\mu}^\varepsilon(v,\cdot)\Big) \star  p^{\alpha}_{s-v}\Big|_{B^{1+\Gamma}_{p',1}}\notag\\
&&+ \int_{s-1}^s dv \Big|\Big(\mathcal B_{\brho_{t,\mu}^\varepsilon}^\varepsilon(v,\cdot) \cdot \nabla \brho_{t,\mu}^\varepsilon(v,\cdot)\Big) \star   p^{\alpha}_{s-v}(\cdot)\Big|_{B^{1+\Gamma}_{p',1}}\label{norm_IPP_BIS}.
\end{eqnarray}

As indicated above, for the time interval $v\in [t,s-1] $, we proceed as in \eqref{first_ctr_lem6_c1}. On the other hand, for $v\in [s-1,s]$,
applying successively \eqref{YOUNG} (with $m_1=1,m_2=\infty$), \eqref{PROD1}, \textcolor{black}{\eqref{YOUNG} again} and finally \eqref{BesovEmbedding} yields
\begin{eqnarray*}
 &&	\Big|\textcolor{black}{\Big(}\div\Big(\mathcal B_{\brho_{t,\mu}^\varepsilon}^\varepsilon(v,\cdot) \Big)   \brho_{t,\mu}^\varepsilon(v,\cdot)\textcolor{black}{\Big)} \star  p^{\alpha}_{s-v}\Big|_{B^{1+\Gamma}_{p',1}} \\
 &\le& C \Big|\div\Big(\mathcal B_{\brho_{t,\mu}^\varepsilon}^\varepsilon(v,\cdot)\Big)   \brho_{t,\mu}^\varepsilon(v,\cdot) \Big|_{B^{\Gamma}_{p',\infty}} \Big| p^{\alpha}_{s-v}\Big|_{B^{1}_{1,1}}\\
 &\le & C\Big|\div \Big(\mathcal B_{\brho_{t,\mu}^\varepsilon}^\varepsilon(v,\cdot)\Big)\Big|_{B^{ \Gamma}_{\infty,\infty}} \Big|\brho_{t,\mu}^\varepsilon(v,\cdot)\Big|_{B^{ \Gamma}_{p',1}} \Big| p^{\alpha}_{s-v}\Big|_{B^{1}_{1,1}}\\
 &\le & C|\div(b^\varepsilon(v,\cdot))|_{B^{-1}_{p,q}}
 \Big|\brho_{t,\mu}^\varepsilon(v,\cdot)\Big|_{B^{ 1+\Gamma}_{p',q'}} \Big|\brho_{t,\mu}^\varepsilon(v,\cdot)\Big|_{B^{ 1+\Gamma}_{p',1}} \Big| p^{\alpha}_{s-v}\Big|_{B^{1}_{1,1}}\le C|\div(b^\varepsilon(v,\cdot))|_{B^{-1}_{p,q}}\Big|\brho_{t,\mu}^\varepsilon(v,\cdot)\Big|_{B^{ 1+\Gamma}_{p',1}}^2 \Big| p^{\alpha}_{s-v}\Big|_{B^{1}_{1,1}}.\nonumber 
\end{eqnarray*}
Similarly,
\begin{eqnarray*}
 &&	\Big|\mathcal B_{\brho_{t,\mu}^\varepsilon}^\varepsilon(v,\cdot) \cdot\nabla   \brho_{t,\mu}^\varepsilon(v,\cdot) \star  p^{\alpha}_{s-v}\Big|_{B^{1+\Gamma}_{p',1}} \\
 &\le& C \Big|\mathcal B_{\brho_{t,\mu}^\varepsilon}^\varepsilon(v,\cdot)\cdot \nabla   \brho_{t,\mu}^\varepsilon(v,\cdot) \Big|_{B^{\Gamma}_{p',\infty}} \Big| p^{\alpha}_{s-v}\Big|_{B^{1}_{1,1}}\\
 &\le & C\Big|\mathcal B_{\brho_{t,\mu}^\varepsilon}^\varepsilon(v,\cdot)\Big|_{B^{ \Gamma}_{\infty,\infty}} \Big|\nabla \brho_{t,\mu}^\varepsilon(v,\cdot)\Big|_{B^{ \Gamma}_{p',1}} \Big| p^{\alpha}_{s-v}\Big|_{B^{1}_{1,1}}\\
 &\underset{\eqref{LO}}{\le} & C|b^\varepsilon\textcolor{black}{(v,\cdot)}|_{B^{-1}_{p,q}}
 \Big|\brho_{t,\mu}^\varepsilon(v,\cdot)\Big|_{B^{ 1+\Gamma}_{p',q'}} \Big|\brho_{t,\mu}^\varepsilon(v,\cdot)\Big|_{B^{ 1+\Gamma}_{p',1}} \Big| p^{\alpha}_{s-v}\Big|_{B^{1}_{1,1}}\le C|b^\varepsilon(v,\cdot)|_{B^{-1}_{p,q}}\Big|\brho_{t,\mu}^\varepsilon(v,\cdot)\Big|_{B^{ 1+\Gamma}_{p',1}}^2 \Big| p^{\alpha}_{s-v}\Big|_{B^{1}_{1,1}}.\nonumber 
\end{eqnarray*}
\textcolor{black}{Note from the above bounds that the terms $b^\varepsilon, {\rm div}(b^\varepsilon)$ naturally appear with same norm.} \textcolor{black}{Using again \eqref{NEW_CTR_CI_LT} for the initial condition}, the two above estimates and \eqref{BEFORE_NOR} 
with $\beta=-1 $ in \eqref{norm_IPP_BIS},  \textcolor{black}{we obtain} thanks to \eqref{SING_STABLE_HK} that
\begin{eqnarray*}
&&| \brho_{t,\mu}^\varepsilon(s,\cdot)|_{B^{1+\Gamma}_{p',1}} \notag\\
&\le&  \textcolor{black}{C\Bigg\{}|\mu|_{\textcolor{black}{B^{\beta_1}_{p_1,q_1}}} \w_{- \frac{1}{\alpha}\left[\Gamma + 1 + \frac dp - \zeta_0\right]}^{- \frac{d}{\alpha p}}+\textcolor{black}{\int_t^{s-1} \frac{dv}{[(s-v)\wedge 1]^{\frac {2}\alpha}[(s-v)\vee 1]^{\frac {1}\alpha}} | b^\varepsilon(v,\cdot)|_{B^{-1}_{p,q}} | \brho_{t,\mu}^\varepsilon(v,\cdot)|_{B^{ 1+\Gamma}_{p',1}}^2}\\
&&+\int_{\textcolor{black}{s-1}}^s \frac{dv}{(s-v)^{\frac 1 \alpha}} \left(|  b^\varepsilon(v,\cdot) |_{B^{-1}_{p,q}} +|  \div(b^\varepsilon(v,\cdot)) |_{B^{-1}_{p,q}}\right)  |\brho_{t,\mu}^\varepsilon(v,\cdot) |_{B^{1+\Gamma}_{p', 1}}^2 \textcolor{black}{\Bigg\}}\\
&\le & \textcolor{black}{C\Bigg\{|\mu|_{\textcolor{black}{B^{\beta_1}_{p_1,q_1}}} \w_{- \frac{1}{\alpha}\left[\Gamma + 1 + \frac dp - \zeta_0\right]}^{- \frac{d}{\alpha p}}+\int_{\textcolor{black}{t}}^s \frac{dv}{(s-v)^{\frac 1 \alpha}} \left(|  b^\varepsilon(v,\cdot) |_{B^{-1}_{p,q}} +|  \div(b^\varepsilon(v,\cdot)) |_{B^{-1}_{p,q}}\right)  |\brho_{t,\mu}^\varepsilon(v,\cdot) |_{B^{1+\Gamma}_{p', 1}}^2 \Bigg\}}.
\end{eqnarray*}
\textcolor{black}{Applying} the $L^r-L^{r'}$ H\"older inequality in time in the above equation, we get
\begin{eqnarray*}
|\brho_{t,\mu}^\varepsilon(s,\cdot)|_{B^{1+\Gamma}_{p',1}} &\le& \textcolor{black}{C\Bigg\{}|\mu|_{\textcolor{black}{B^{\beta_1}_{p_1,q_1}}} \w_{- \frac{1}{\alpha}\left[\Gamma + 1 + \frac dp - \zeta_0\right]}^{- \frac{d}{\alpha p}} \\
&&+\Big(\big| b^\varepsilon \big|_{L^{r}(B^{-1}_{p,q})} +|  \div(b^\varepsilon) |_{L^r(B^{-1}_{p,q})}\Big) \Bigg(\int_t^s \frac{dv}{\textcolor{black}{(s-v)^{\frac {r'}\alpha}}}  | \brho_{t,\mu}^\varepsilon(v,\cdot)|_{B^{1+\Gamma}_{p', 1}}^{2r'} \Bigg)^{\frac 1{r'}}\textcolor{black}{\Bigg\}}. 
\end{eqnarray*}
We now multiply both sides by $\w_{\theta_1}^{\theta_2}(s-t)$ {\color{black}and} argue as we did to pass from \eqref{CI_EPS_AVANT_GR_QUADRA_PROOF_2} to \eqref{AVANT_EQUILIBRE} to deduce that
\begin{eqnarray}
&&\sup_{s \in (t,S]}\Big\{\w_{\theta_1}^{\theta_2}(s-t)|\brho_{t,\mu}^\varepsilon(s,\cdot)|_{B^{1+\Gamma}_{p',1}}\Big\} \le  \textcolor{black}{C\Bigg\{}|\mu|_{\textcolor{black}{B^{\beta_1}_{p_1,q_1}}} \max_{s\in(t,S]} \Big\{\w_{\theta_1- \frac{1}{\alpha}\left[\Gamma + 1 + \frac dp - \zeta_0\right]}^{\theta_2- \frac{d}{p\alpha}}(s-t)\Big\}\notag\\
&&+\Big(\big|  b \big|_{L^{r}(B^{-1}_{p,q})}+|{\div}(b)|_{L^r(B_{\textcolor{black}{p,q}}^{-1})}\Big)\sup_{s\in (t,S]} \left\{\w_{\frac 1{r'}-\frac{1}{\alpha}-\theta_1}^{\frac 1{r'}-\textcolor{black}{\frac 1\alpha}-\theta_2}(s-t)\right\}\bigg(\sup_{s\in (t,S]}\Big\{ \w_{\theta_1}^{\theta_2}(s-t) |\brho_{t,\mu}^\varepsilon(s,\cdot)|_{B_{p',1}^{1 + \Gamma}}\Big\} \bigg)^2, \notag 
\end{eqnarray}
where we implicitly assumed that the following condition hold to apply Lemma \ref{lem_gestion_sing}:
$$2\theta_i r'<1\iff \theta_i <\frac {1}{2r'},\quad i=1,2 \text{ and } \frac 1{r'} - \frac 1\alpha >0.$$
To equilibrate the singularities in small time regime, we require, in addition \textcolor{black}{to} the above constraints:
$$\exists \Gamma >0,\, \theta_1 \ge 0,\, {\rm s.t. }\quad \frac{\Gamma + 1+  \frac dp- \zeta_0}{\alpha} < \theta_1 < \frac{1}{r'}-\frac{1}{\alpha}.$$
As $1/r' - 1/\alpha \le 1/(2r') \Leftrightarrow \alpha(1-1/r) \le 2$ is always satisfied, conditions reduce to
$$\left\{\exists \Gamma >0,\, \theta_1 \ge 0,\, {\rm s.t. }\quad \frac{\Gamma + 1+  \frac dp- \zeta_0}{\alpha} < \theta_1 < \frac{1}{r'}-\frac{1}{\alpha} \text{ and }  \frac 1{r'} - \frac 1\alpha >0\right\} \Leftrightarrow \eqref{THE_COND_CI}.$$
Reasoning as we did for the proof under \A{C1} gives
\begin{equation*}
\Gamma = \eta \left\{ \alpha - 2 - \frac \alpha r - \frac dp   +\zeta_0 \right\},\quad \alpha \theta_1 = \left\{1  + \frac dp -\zeta_0 \right\} + \frac {1+\eta}{2\eta} \Gamma,\ \eta \in(0,1).
\end{equation*}
We now move to the long time regime. Our constraints are
$$ \frac 1{r'}-\textcolor{black}{\frac 1\alpha} \le \theta_2 \le \frac d{p\alpha},\quad \theta_2 < \frac 1{2r'} \text{ and } \frac 1{r'} - \frac 1\alpha >0.$$

The condition \eqref{LT} then allows to set, as under \A{C1} above, $\theta_2:=1/r'-1/\alpha $, {\color{black}and} it then follows that:
\begin{eqnarray}
&&\sup_{s\in (t,S]}\Big\{ \w_{\theta_1}^{\theta_2}(s-t) |\brho_{t,\mu}^\varepsilon(s,\cdot)|_{B_{p',1}^{-\beta+ \Gamma}}\Big\} \le  \textcolor{black}{C\Bigg\{} \textcolor{black}{|\mu|_{B^{\beta_1}_{p_1,q_1}}}  \w_{\frac{1-\eta}{2\eta}\frac \Gamma{\alpha}}^{0}(S-t)\notag 
\\
&&\qquad  +C_1\Big(|b|_{L^{r}(B^{-1}_{p,q})}+|\div(b)|_{L^{r}(B^{-1}_{p,q})}\Big)   \w_{\frac{1-\eta}{2\eta} \frac \Gamma{\alpha}}^{0}(S-t)\bigg(\sup_{s\in (t,S]}\Big\{ \w_{\theta}^{0}(s-t) |\brho_{t,\mu}^\varepsilon(s,\cdot)|_{B_{p',1}^{-\beta+ \Gamma}}\Big\} \bigg)^2\textcolor{black}{\Bigg\}},\notag
\end{eqnarray}
provided again that $ 1/r'-1/\alpha<1/(2r')\iff 1/r'<2\alpha\iff \alpha(1-\frac 1r)<2$. The proof of Lemma \ref{lem_unifesti_gencase2_RELOADED_LT} is complete. 
\end{proof}

\begin{proof}[Proof of Lemma \ref{lem_quad_gronv_C1_LT}.]
The proof is rather similar to the one of Lemma \ref{lem_quad_gronv_C1}, except that we now use time weights which have a different behavior in short and long time (and are actually, from the choice of the exponents in Lemma \ref{lem_quad_gronv_C1} bounded by 1).

 {\color{black} From  Lemma \ref{lem_unifesti_gencase2_RELOADED_LT}, for any $\textcolor{magenta}{S\le T}$, the mapping $$f_t^{\textcolor{black}{\varepsilon}}:s\in \textcolor{black}{(}t,S]\mapsto f_t^{\textcolor{black}{\varepsilon}}(s):=\sup_{v\in (t,s]}\w_{\theta_1}^{\theta_2}(v-t) |\brho_{t,\mu}^\varepsilon(v,\cdot)|_{B^{ -\beta+\Gamma}_{p',{\color{black}1}}}$$ satisfies an inequality of the form:
\begin{equation}\label{PREAL_TRINOME_LT}
0 \le  a_t(s) - f_t^{\textcolor{black}{\varepsilon}}(s)+c_t(s)(f_t^{\textcolor{black}{\varepsilon}}(s))^2,\,t< s\le S,
\end{equation}
where, 
\begin{eqnarray*}
a_t(s)&=& C_{\ref{lem_unifesti_gencase2_RELOADED_LT}} |\mu|_{\textcolor{black}{B^{\beta_1}_{p_1,q_1}}} 
\w_{\frac{1-\eta}{2\eta} \frac \Gamma{\alpha}}^{0} (s-t)
= : C_{\ref{lem_unifesti_gencase2_RELOADED_LT}}  c_0 \w_{\frac{1-\eta}{2\eta} \frac \Gamma{\alpha}}^0 (\textcolor{black}{s}-t),\\
c_t(s)&=& C_{\ref{lem_unifesti_gencase2_RELOADED_LT}} \textcolor{black}{\Big(}| b|_{L^{r}(B^{\beta}_{p,q})}+\textcolor{black}{| {\rm div}(b)|_{L^{r}(B^{\beta}_{p,q})}\ind_{\beta=-1}\Big)} 
\w_{\frac{1-\eta}{2\eta} \frac \Gamma{\alpha}}^0 (\textcolor{black}{s}-t)
= : C_{\ref{lem_unifesti_gencase2_RELOADED_LT}} c_b \w_{\frac{1-\eta}{2\eta} \frac \Gamma{\alpha}}^0 (\textcolor{black}{s}-t)
.
\end{eqnarray*}

Define, for {\color{black}$\tau \in (0,T] $}, the polynomial $P_{\tau}(z) = a_t(\tau) -z+c_t(\tau)z^2$. \textcolor{black}{{\color{black} Note that} the time dependent coefficients $s\in (t,S]\mapsto a_t(s),c_t(s) $ are increasing {\color{black} and bounded by $C_{\ref{lem_unifesti_gencase2_RELOADED_LT}}  c_0 + C_{\ref{lem_unifesti_gencase2_RELOADED_LT}} c_b $ (as the times weights are uniformly bounded by 1).  Thus, we have from \eqref{PREAL_TRINOME_LT} that}} $P_{\tau}(f_t^{\textcolor{black}{\varepsilon}}(s)) \ge 0$, $t< s\le S$. Moreover, as soon as $c_t(\tau)a_t(\tau) < 1/4$, \textcolor{black}{which will always be the case provided that $T$ is small enough} or $|\mu|_{B_{p_1,q_1}^{\beta_1}} $ is smaller than some $\mathcal C_0>0$, this polynomial admits two positive roots and since from Lemma \ref{lem_cont0} we have that for every fixed $\varepsilon >0$, $s \mapsto f_t^\varepsilon (s)$ is continuous and $ f_t^\varepsilon (s) \to 0$ as $s \to t$, we obtain that $f_t^\varepsilon(s)$ is bounded by the smaller root of the polynomial, namely,
$$\forall t< s \le S,\, f_t^\varepsilon(s) \le \textcolor{black}{\frac{1-\sqrt{1-4c_t(S)a_t(S)}}{2c_t(S)}}.$$
Setting then
{\color{black}
$$\mathcal C_0 := [2C_{\ref{lem_unifesti_gencase2_RELOADED}}^2c_b]^{-1} \Longrightarrow \forall \tau \in (0,T],\quad 4 c_t(\tau)a_t(\tau) \le \frac 12,$$
and recalling
$$
\mathcal T_1 =( t + \left[8C_{\ref{lem_unifesti_gencase2_RELOADED}}^2 c_0c_b\right]^{-[\eta/(1-\eta)]\, [\alpha/\Gamma]})\wedge T \Longrightarrow 4 c_t(\mathcal T_1)a_t(\mathcal T_1) \le \frac 12,$$
we obtain, defining 
$$\mathcal T_2 : = \mathcal T_1 \mathbf 1_{c_0 > \mathcal C_0} + T \mathbf{1}_{c_0 \le \mathcal C_0},$$
that
\begin{align*}
f_t^\varepsilon(s)
&\le a_t(\mathcal T_2)(1+2^{\frac 52}c_t(\mathcal T_2)a_t(\mathcal T_2)):=C_{\ref{lem_quad_gronv_C1_LT}}.
\end{align*}
This gives the claim.
}
}

\end{proof}

{\color{black}Let us point out that t}he stability analysis of Lemma \ref{SECOND_STAB_BIS} could then be performed rather similarly, from the control of Lemma \ref{lem_quad_gronv_C1_LT} with less stringent time integrability conditions, i.e. in $L_{{\ww}_{\theta}}^{\tilde r}((t,S], B^{-\textcolor{black}{\beta}+ \vartheta\Gamma}_{p',1}), \tilde r<+\infty$ or possibly in $L_{{\ww}_{\theta}}^{\infty}((t,S], B^{-\textcolor{black}{\beta}+ \vartheta\Gamma}_{p',1})$ establishing an appropriate Gronwall-Volterra type lemma following e.g. the approach of \cite{webb:19}. 

{\color{black}\section{Integration of the time singularities}}

\begin{lemme}
	\label{lem_gestion_sing}
For $0\le a_1,a_2,b_1,b_2<1$, for $t<s$ in $(\R^{+})^2$ define, with our notations {\color{black}\eqref{DEF_WEIGHT}},
\begin{eqnarray*}
I_{t,s} := \int_t^s {\rm w}_{\textcolor{black}{-}a_1}^{\textcolor{black}{-}a_2}(s-v) {\rm w}_{\textcolor{black}{-}b_1}^{\textcolor{black}{-}b_2}(v-t)dv.
\end{eqnarray*}
Then, there exists $C(a,b_1,b_2)>0$ s.t. for $t<s$ in $(\R^{+})^2$
\begin{eqnarray*}
I_{t,s}\le C(a,b_1,b_2) \w_{1-b_1-a_1}^{1-b_2-a_2}(s-t).
\end{eqnarray*}
\end{lemme}

\begin{proof}[Proof of Lemma \ref{lem_gestion_sing}]
Assume first that $s-t \le 2$, we have, for any $t<v<s$,  $[(v-t) \vee 1]^{-b_2},\, [(s-v) \vee 1]^{-a_2}  \le 1$ and 
\begin{eqnarray*}
 [(v-t)\wedge 1]^{-b_1} \le 2^{b_1} (v-t)^{-b_1},\,  [(s-v)\wedge 1]^{-a_1} \le 2^{a_1} (s-v)^{-a_1}.
\end{eqnarray*}
so that, as $a_1,b_1<1$,
\begin{eqnarray*}
I_{t,s} &\le& 2^{b_1+a_1}\int_t^s (s-v)^{-a_1}(v-t)^{-b_1} dv\\
&\le& 2^{b_1+a_1} B(1-a_1,1-b_1) (s-t)^{1-b_1-a_1}.
\end{eqnarray*}
Thus,
\begin{eqnarray*}
I_{t,s} &\le&  2^{b_1+a_1} B(1-a_1,1-b_1) (s-t)^{1-b_1-a_1}.
\end{eqnarray*}
Assume now that $s-t > 2$. \textcolor{black}{Introduce for $0\le t\le t'\le  s'\le s $,
$$\tilde I_{t',s'}(t,s)=\int_{t'}^{s'}\w_{-{a_1}}^{-a_2}(s-v)\w_{-{b_1}}^{-b_2}(t-v) dv,$$
so that $I_{t,s}=\tilde I_{t,s}(t,s) $ and
\begin{eqnarray*}
I_{t,s} = \tilde I_{t,t+1}(t,s) + \tilde I_{t+1,s-1}(t,s)+ \tilde I_{s-1,s}(t,s).
\end{eqnarray*}
}
We have, recalling $0 \le a_1,a_2,b_1,b_2<1$
\begin{eqnarray*}
\textcolor{black}{\tilde I_{t,t+1}(s,t)}  &=& \int_{t}^{t+1}(s-v)^{-a_2} [(v-t)\wedge 1]^{-b_1}  dv\\
&\le& \frac 1{1-b_1} (s-t-1)^{-a_2} \le \frac{2^{a_2}}{1-b_1}(s-t)^{-a_2},\\
\textcolor{black}{\tilde I_{t+1,s-1}(s,t)}  &=& \int_{t+1}^{s-1}(s-v)^{-a_2}  (v-t)^{-b_2}dv\\
& \le& B(1-a_2,1-b_2)(s-t)^{1-a_2-b_2},\\
\textcolor{black}{\tilde I_{s-1,s}(s,t)}  &=& \int_{s-1}^{s}(s-v)^{-a_1}  (v-t)^{-b_2}dv\\
&\le& \frac 1{1-a_1} (s-t-1)^{-b_2} \le \frac{2^{b_2}}{1-a_1}(s-t)^{-b_2},\\
\end{eqnarray*}
from which we deduce that
\begin{eqnarray*}
I_{t,s}\le \Big\{ 2^{a_2}(1-b_1)^{-1}+ 2^{b_2}(1-a_1)^{-1} + B(1-a_2,1-b_2)\Big\}(s-t)^{1-a_2-b_2}.
\end{eqnarray*}
It therefore holds that there exists $C(a,b_1,b_2)>0$ s.t. for $t<s$ in $(\R^{+})^2$
\begin{eqnarray*}
I_{t,s}\le C(a,b_1,b_2) [(s-t)\wedge1]^{1-b_1-a_1} [(s-t)\vee 1]^{1- b_2-a_2}.
\end{eqnarray*}
\end{proof}

{\color{black}\section{Estimates in Besov norm for the stable heat-kernel}}
\label{HK_APPENDIX}
We consider here the proof of \eqref{SING_STABLE_HK} for a negative regularity index $\gamma<0 $. As a preliminary, we recall the $L^\ell$-estimate of $p_{\alpha}$ from \cite{CdRM-20} (see again Lemma 11 and (3.19) therein):
$$
|\partial^{\mathbf a}p_{\alpha}(s-t,\cdot)|_{L^\ell}\le C(s-t)^{-(\frac d{\alpha \ell'}+\frac{|\mathbf a|}\alpha)},\qquad |\mathbf a|\le 1,\,\ell\in[1,\infty], \ell^{-1}+{\ell'}^{-1}=1.
$$
Let us start with the long time estimates, i.e. we assume that $s-t\ge 1 $. In that case write from the definition in \eqref{HEAT_CAR}:
\begin{align*}
|\mathcal F^{-1}(\phi\mathcal F(\partial^{\mathbf a}p_{\alpha}(s-t,\cdot)))|_{\textcolor{black}{L^\ell}}&=|\mathcal F^{-1}(\phi) * \partial^{\mathbf a} p_{\alpha}(s-t,\cdot)|_{L^\ell}\le |\mathcal F^{-1}(\phi)|_{L^1} | \partial^{\mathbf a}p_{\alpha}(s-t,\cdot)|_{L^\ell}\\
&\le C(s-t)^{-(\frac d{\alpha \ell'}+\frac{|\mathbf a|}\alpha)}.
\end{align*}
On the other hand, for $\gamma<0 $ (and $m<+\infty$):
\begin{align*}
\mathcal T_{\ell,m}^\gamma(\partial^{\mathbf a}p_{\alpha}(s-t,\cdot)))&=\left(\int_0^1\,\frac{dv}{v}v^{(n-\gamma/\textcolor{black}{\alpha})m}|\partial^n_v \tilde p_\alpha(v,\cdot)*\partial^{\mathbf a} p_{\alpha}(s-t,\cdot)|^m_{L^\ell}\right)^{\frac 1m}\\
&\le \left(\int_0^1\,\frac{dv}{v}v^{(n-\gamma/\textcolor{black}{\alpha})m}|\partial^n_v \tilde p_\alpha(v,\cdot)|_{L^1}|^m |\partial^{\mathbf a} p_{\alpha}(s-t,\cdot)|^m_{L^\ell}\right)^{\frac 1m}\\
&\le C(s-t)^{-\frac{d}{\alpha \ell'}}\left(\int_0^1\,\frac{dv}{v}v^{(n-\gamma/\textcolor{black}{\alpha})m}v^{-nm}\right)^{\frac 1m}\le C(s-t)^{-\frac{d}{\alpha \ell'}}.
\end{align*}
The case $m=+\infty$ is handled similarly. This completes the proof of \eqref{SING_STABLE_HK} for $\gamma<0 $ and $s-t\ge 1 $. 

Let us now turn to the \textit{short time} estimate $s-t\le 1 $. Write then, still from the definition in \eqref{HEAT_CAR}:
\begin{align*}
|\mathcal F^{-1}(\phi\mathcal F(\partial^{\mathbf a}p_{\alpha}(s-t,\cdot)))|_{\textcolor{black}{L^\ell}}&=|\mathcal F^{-1}(\phi) * \partial^{\mathbf a} p_{\alpha}(s-t,\cdot)|_\ell\le |\mathcal F^{-1}(\phi)|_{L^\ell} | \partial^{\mathbf a}p_{\alpha}(s-t,\cdot)|_{L^1}\\
&\le C(s-t)^{-\frac{|\mathbf a|}\alpha},
\end{align*}
and no other time singularity than the one induced by the derivatives (when $|\mathbf a|\neq 0 $) appear for this part of the norm. Turning to the thermic part yields (considering again w.l.o.g $m<+\infty $):
\begin{align*}
\mathcal T_{\ell,m}^\gamma(\partial^{\mathbf a}p_{\alpha}(s-t,\cdot)))&=\left(\int_0^1\,\frac{dv}{v}v^{(n-\gamma/\textcolor{black}{\alpha})m}|\partial^n_v \tilde p_\alpha(v,\cdot)*\partial^{\mathbf a} p_{\alpha}(s-t,\cdot)|^m_{L^\ell}\right)^{\frac 1m}\\
\le& \left(\int_0^{s-t}\,\frac{dv}{v}v^{(n-\gamma/\textcolor{black}{\alpha})m}|\partial^n_v \tilde p_\alpha(v,\cdot)*\partial^{\mathbf a} p_{\alpha}(s-t,\cdot)|^m_{L^\ell}\right)^{\frac 1m}\\
&+\left(\int_{s-t}^1\,\frac{dv}{v}v^{(n-\gamma/\textcolor{black}{\alpha})m}|\partial^n_v \tilde p_\alpha(v,\cdot)*\partial^{\mathbf a} p_{\alpha}(s-t,\cdot)|^m_{L^\ell}\right)^{\frac 1m}\\
\le& \left(\int_0^{s-t}\,\frac{dv}{v}v^{(n-\gamma/\textcolor{black}{\alpha})m}|\partial^n_v \tilde p_\alpha(v,\cdot)|_{L^1}^m|\partial^{\mathbf a} p_{\alpha}(s-t,\cdot)|^m_{L^\ell}\right)^{\frac 1m}\\
&+\left(\int_{s-t}^1\,\frac{dv}{v}v^{(n-\gamma/\textcolor{black}{\alpha})m}|\partial^n_v \tilde p_\alpha(v,\cdot)|_{L^\ell}^m|\partial^{\mathbf a} p_{\alpha}(s-t,\cdot)|^m_{L^1}\right)^{\frac 1m}\\
\le& C\Big( (s-t)^{-(\frac{d}{\alpha \ell'}+\frac{|\mathbf a|}\alpha)}\Big(\int_0^{s-t}\frac{dv}{v}v^{-\frac \gamma \alpha m}\Big)^{\frac 1m}+(s-t)^{-\frac{|\mathbf a|}{\alpha}}\left(\int_{(s-t)}^1\,\frac{dv}{v}v^{(n-\gamma/\textcolor{black}{\alpha})m}v^{-nm-\frac{d}{\alpha \ell '}m}\right)^{\frac 1m}\Big)\\
\le& C\left( (s-t)^{-(\frac{d}{\alpha \ell'}+ \frac{|\mathbf a|}{\alpha}+\frac \gamma\alpha)}+(s-t)^{-\frac{|\mathbf a|}{\alpha}}[(s-t)^{-(\frac{\gamma}{\alpha}+\frac{d}{\alpha \ell'})\I_{-(\frac{\gamma}{\alpha}+\frac{d}{\alpha \ell'})m<0}}+|\ln(s-t)|\I_{\gamma=-\frac d{\ell'}}]\right)\\
\le &C (s-t)^{-(\frac{|\mathbf a|}{\alpha}+[\frac{d}{\alpha \ell'} +\frac \gamma \alpha]_+])}(1+|\ln(s-t)|\I_{\gamma=-\frac d{\ell'}}]\I_{m<+\infty}).
\end{align*}
Observe that the logarithmic correction vanishes does not appear when $m=\infty $. This therefore concludes the proof of \eqref{SING_STABLE_HK}.

\vspace{.3cm}
\subsection*{Data availability}
Data sharing not applicable to this article as no datasets were generated or analysed during the current study.

{\color{black}
	\subsection*{Acknowledgements} For the second author, the paper was prepared within the framework of the Basic Research Program at HSE University and the RSF project No. 24-11-00123. The first author thanks the Centre Henri Lebesgue ANR-11-LABX-0020-01 for creating
	an attractive mathematical environment. The authors are also thankful to the anonymous referees who carefully review the first version of the paper and helped us to improve it.}

\end{document}